\newcommand{\Z}{\mathbb{Z}}
\newcommand{\Q}{\mathbb{Q}}
\newcommand{\R}{\mathbb{R}}
\newcommand{\C}{\mathbb{C}}
\newcommand{\A}{\mathbb{A}}
\newtheorem{theorem}{Theorem}
\newtheorem{proposition}[theorem]{Proposition}
\newtheorem{lemma}[theorem]{Lemma}
\newtheorem{conjecture}[theorem]{Conjecture}
\newtheorem{definition}[theorem]{Definition}
\numberwithin{theorem}{section}
\theoremstyle{remark}
\newtheorem*{remark}{Remark}
\DeclareMathOperator{\sinc}{sinc}
\DeclareMathOperator{\ord}{ord}
\DeclareMathOperator{\sgn}{sgn}
\DeclareMathOperator{\eigen}{eigen}
\begin{document}
\title{Detecting squarefree numbers}
\author{Andrew R.\ Booker\and Ghaith A.\ Hiary\and Jon P.\ Keating}
\thanks{A.~R.~B.\ was supported by EPSRC Fellowship EP/H005188/1.
J.~P.~K.\ was sponsored by the Air Force Office of Scientific Research,
Air Force Material Command, USAF, under grant number FA8655-10-1-3088.
The US Government is authorized to reproduce and distribute reprints for
Governmental purpose notwithstanding any copyright notation thereon.
J.~P.~K. and G.~A.~H.\ also gratefully acknowledge support from the
Leverhulme Trust.}
\address{A.~R.~B. and J.~P.~K.: School of Mathematics, University of Bristol, University Walk,
Bristol, BS8 1TW, United Kingdom}
\address{G.~A.~H.: Department of Mathematics, The Ohio State University, 231 West 18th Ave, Columbus, OH 43210,
United States}
\begin{abstract}
We present an algorithm, based on the explicit formula for $L$-functions
and conditional on GRH, for proving that a given integer is squarefree
with little or no knowledge of its factorization.  We analyze the
algorithm both theoretically and practically, and use it to prove that
several RSA challenge numbers are not squarefull.
\end{abstract}
\maketitle
\section{Introduction}
Let $k$ be a finite field and $f$ a non-zero element of $k[x]$.  Then it
is well known that $f$ is squarefree if and only if $\gcd(f,f')=1$,
and the latter condition may be checked quickly (in deterministic
polynomial time) by the Euclidean algorithm.  It is a long-standing
question in algorithmic number theory whether there is a correspondingly
simple procedure to test if a given integer is squarefree; in particular,
can one determine whether $N\in\Z$ is squarefree more rapidly than by
factoring it?

In this paper, we describe an algorithm, conditional on the Generalized
Riemann Hypothesis (GRH), for proving an integer squarefree with little
or no knowledge of its factorization, and analyze the complexity of
the algorithm both theoretically and practically.  In particular,
we present some heuristic evidence based on random matrix theory and other probabilistic calculations
that our algorithm runs in deterministic subexponential time
$O\bigl(\exp[(\log{N})^{2/3+o(1)}]\bigr)$. Although this is poorer than
the performance expected of the current best known factoring algorithms,
our method is able to give partial results that one does not obtain
from a failed attempt at factoring.  In particular, we show the following
(see \S\ref{numerics}).
\begin{theorem}
\label{numericalresult}
Assume GRH for quadratic Dirichlet
$L$-functions.  Then the RSA challenge numbers {\tt RSA-210}, {\tt
RSA-220}, {\tt RSA-230} and {\tt RSA-232} are not
squarefull, i.e.\ each has at least one prime factor of multiplicity $1$.
\end{theorem}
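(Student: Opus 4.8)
The plan is to specialize the algorithm of the paper to these four inputs and to record that the resulting finite computation succeeds. Fix $N$ to be one of {\tt RSA-210}, {\tt RSA-220}, {\tt RSA-230}, {\tt RSA-232}. I would first fix a short list of small squarefree integers $d$ (one wants several in reserve, e.g.\ $d\in\{1,-1,\pm2,\pm3,\dots\}$), and for each such $d$ check by integer square-root extraction that $dN$ is not a perfect square, so that $\Q(\sqrt{dN})$ is genuinely quadratic and the associated primitive real character $\chi_{D_d}$ — where $D_d$ is the discriminant of $\Q(\sqrt{dN})$, equivalently the primitive character inducing the Kronecker symbol $\left(\tfrac{dN}{\cdot}\right)$ — is non-trivial. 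The point of working with $\chi_{D_d}$ is that its values, $\chi_{D_d}(p)=\left(\tfrac{dN}{p}\right)$ for every odd prime $p\nmid dN$, can be computed in essentially linear time by the Euclidean algorithm for Jacobi symbols, with no knowledge of the factorization of $N$; the Euler factors of $L(s,\chi_{D_d})$ at the finitely many primes dividing $dN$ are either trivial or fixed by a small amount of trial division, and in any case contribute negligibly to the truncated sum below for the RSA numbers.

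Next I would feed $L(s,\chi_{D_d})$ into the Weil explicit formula with an even test function $\phi\ge0$ whose Fourier transform is supported in $[-\log X,\log X]$. Assuming GRH for this $L$-function, all of its zeros are real and each contributes non-negatively to the spectral side, so the explicit formula rearranges into the certified inequality
\[
\log|D_d|\;\ge\; c\!\!\sum_{p^k\le X}\frac{\left(\tfrac{dN}{p}\right)^{k}\log p}{p^{k/2}}\,\widehat{\phi}(k\log p)\;-\;A_\infty ,
\]
in which $c>0$ and the archimedean constant $A_\infty$ are explicit and the whole right-hand side is computable once $X$ is fixed. On the other hand, if $N$ were squarefull then every prime dividing $N$ would do so to multiplicity at least $2$, so the squarefree kernel of $N$ is at most $\sqrt N$, the squarefree part of $dN$ is at most $|d|\sqrt N$, and hence $|D_d|\le 4|d|\sqrt N$. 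Therefore it suffices to produce, for each of the four numbers and for at least one $d$ on the list, a value of $X$ for which the computed lower bound on $\log|D_d|$ exceeds $\log\!\bigl(4|d|\sqrt N\bigr)=\tfrac12\log N+\log(4|d|)$: that single inequality is incompatible with $N$ being squarefull, and so proves the theorem under GRH.

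Finally I would carry out the computation, evaluating the truncated prime sum for each $d$ and for increasing $X$ (optimizing $\phi$ within the allowed support) until one of the bounds clears its threshold. The main obstacle is exactly this last step: for a fixed $N$ the sum $\sum_{p\le X}\left(\tfrac{dN}{p}\right)\log p/\sqrt p$ grows only like a slowly fluctuating random walk in $X$ and need not be comfortably positive for small $X$ or for the first $d$ one tries — which is why one keeps several $d$ in reserve, and why the heuristic random-matrix analysis is needed to argue that $X$ of subexponential size suffices for a ``typical'' $N$. For the four RSA numbers at hand, however, the matter is settled not by a proof but by direct verification: one runs the algorithm and checks that the certified inequality $|D_d|>4|d|\sqrt N$ is reached within the feasible range of $X$, as reported in \S\ref{numerics}.
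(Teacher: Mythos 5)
Your overall framework---lower-bound the conductor of a quadratic twist via the explicit formula under GRH, and compare it against an upper bound forced by the hypothesis that $N$ is squarefull---is the right one, and matches the paper in spirit. But your quantitative reduction from ``squarefull'' to a conductor bound is too weak to close the argument, and you omit the step that actually lets the paper finish.

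You argue that if $N$ is squarefull its squarefree part is $\le\sqrt{N}$, so $|D_d|\le 4|d|\sqrt{N}$, and you propose to prove the theorem by pushing the explicit-formula lower bound past $\tfrac12\log N+\log(4|d|)$. For {\tt RSA-210} that threshold is about $242$. Two problems. First, the bound is not sharp: if $N$ is squarefull, then each prime dividing the squarefree part of $N$ does so to odd multiplicity $\ge 3$, so in fact $N=s^2|\Delta|^3$ for some integer $s\ge 1$ (here $|\Delta|$ is the conductor of $\chi_{-N}$), giving $|\Delta|\le N^{1/3}$. The relevant threshold is therefore only about $\tfrac13\log N\approx 161$. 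Second---and this is the decisive point---even that improved threshold is out of reach: the paper's computation only certifies $\log|\Delta|>137.5$, and there is no indication that pushing the prime sum to $242$ (or even $161$) is feasible; indeed the whole complexity discussion suggests it is not, since getting within $o(\log N)$ of $\log N$ would cost essentially exponential time. What the paper actually does is combine the modest bound $\log|\Delta|>137.5$ with the identity $N=s^2|\Delta|^3$ to deduce $1\le s<1.3\times 10^{15}$, then observes that computing the prime sum already required testing divisibility of $N$ by every prime $p\le e^X=1.3\times 10^{15}$, so $s=1$ is forced; a separate verification that $N$ is not a perfect cube then rules out $s=1$ and yields the contradiction. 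Your write-up has no analogue of that factor sieve, and without it the explicit-formula bound you can actually compute falls far short of the threshold you set yourself. So there is a genuine gap: you need the $N=s^2|\Delta|^3$ decomposition, the cube test, and the cheap trial division up to $e^X$, not merely a larger value of $X$.
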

The challenge numbers mentioned in the theorem, ranging in size from
210 to 232 digits, are significant because they are the smallest
that have yet to be factored.\footnote{{\tt RSA-210} was factored in
September 2013, after this paper was submitted but before publication.}
Certainly the technology to factor them exists (in fact the comparably
sized\footnote{{\tt RSA-704} and {\tt RSA-768} are named for their sizes
in binary; in decimal they have 212 and 232 digits, respectively.}
{\tt RSA-704} and {\tt RSA-768} were successfully factored in 2012
and 2009, respectively), but it remains prohibitively expensive to
perform such factorizations routinely.  In contrast, the proof of
Theorem~\ref{numericalresult} for {\tt RSA-210}
could be carried out with a desktop PC
in a few months. To our knowledge, Theorem~\ref{numericalresult} is the
first statement of its kind to be proven without exhibiting any factors
of the number in question.

\subsection*{Acknowledgements}
We thank Paul Bourgade, Peter Sarnak and Akshay Venkatesh for helpful
conversations.

\subsection{Background}
We begin with some background on the problem of squarefree testing,
before describing our main algorithm in \S\ref{explicit}.
Given an integer $N>1$, we first note that if $N$ has no prime factors
$\le\sqrt[3]{N}$ then it is squarefree if and only if it is not a
perfect square.  Thus, since it is easy to detect squares, in order to
prove a number squarefree it suffices to find all of its prime factors
up to the cube root.  On the other hand, the Pollard--Strassen algorithm
\cite{pollard, strassen} finds all prime factors of $N$ up to a given
bound $B$ in time $O_\varepsilon\bigl(N^\varepsilon\sqrt{B}\bigr)$.
This immediately yields an algorithm for squarefree testing in
time $O_\varepsilon\bigl(N^{1/6+\varepsilon}\bigr)$.  We remark
that with some modifications to the Pollard--Strassen algorithm,
along the lines of \cite{costa-harvey} but specific to this
problem, one can improve the running time very slightly to
$O\bigl(N^{\frac16-\frac{c}{\log\log{N}}}\bigr)$ for some $c>0$.

Although Pollard--Strassen is often regarded as a purely theoretical
result, with modern computers it is possible to implement it and
realize some improvement in speed over trial division.  However, the
gains do not occur until $B$ is of size $10^9$ at least.  As a result,
even the modified algorithm mentioned above is only practical for $N$
up to $10^{70}$ or so.  On the other hand, the Quadratic Sieve algorithm
running on a PC will, in practice, almost surely factor a given $N\le
10^{70}$ within a few minutes; thus, at least with present algorithms
and technology, it is always better to try to factor the given integer.

\subsection{Fundamental discriminants}
\label{fdsection}
Our approach rests on a way of characterizing the squarefree integers
that does not directly refer to their factorization.  Precisely,
if $d\in\Z$, $d\equiv1\pmod4$, then $d$ is squarefree if and only
if it is a fundamental discriminant.  (Note that if $N\in\Z$
is odd then $d=(-1)^\frac{N-1}2N$ satisfies $d\equiv1\pmod4$,
so this restriction entails no loss of generality.) The
advantage of this criterion is that whether or not a given
discriminant $d$ is fundamental can be detected from values of
the quadratic character $\chi_d(n)=\left(\frac{d}{n}\right)$, where
$\left(\frac{\;\;}{\;\;}\right)$ denotes the Kronecker symbol.  In turn,
$\chi_d(n)$ is easy to compute for a given $n$, thanks to quadratic
reciprocity; in particular, if $n$ is a prime then the Kronecker symbol
$\left(\frac{d}{n}\right)$ reduces to the Legendre symbol, which can be
evaluated, e.g., by Euler's criterion.

Let $\mathcal{F}$ denote the set of fundamental discriminants.
To see how one might use the above to prove quickly that a given $d$ is
squarefree, note first that we have in general that $d=\Delta\ell^2$,
where $\Delta\in\mathcal{F}$ and $\ell\in\Z_{>0}$. Here
$|\Delta|$ is an invariant of the character $\chi_d$ (its conductor),
which we aim to show equals, or is as least close to, $|d|$.
By testing whether $d$ is a square, we may assume without
loss of generality that $\Delta\ne1$.

For any $x>0$, consider the series
\begin{equation}
\label{thetaseries}
S_\Delta(x)=\frac1{\sqrt{x}}\sum_{n=1}^{\infty}
\chi_\Delta(n)\left(\frac{n}{x}\right)^{(1-\chi_\Delta(-1))/2}e^{-\pi(n/x)^2},
\end{equation}
which is essentially the twisted $\theta$-function.
Note here that we may calculate $\chi_\Delta(n)$ for any given $n$, even
without knowledge of $\Delta$; in fact, we have $\chi_\Delta(n)=\chi_d(n)$
unless $n$ has a common factor with $\ell$. We may assume without loss
of generality that we never come across such an $n$, since otherwise
we will have found a square factor of $d$, answering the original
question.

If one thinks of the character values $\chi_\Delta(n)$ as ``random''
$\pm1$ then, thanks to the decay of the Gaussian, the series in
\eqref{thetaseries} is the result of a random walk of length about $x$,
which will typically have size on the order of $\sqrt{x}$;
thus, one might expect $S_\Delta(x)$ to oscillate,
without growing very large or decaying, as $x\to\infty$.
This turns out to be an accurate description for $x$ up to $\sqrt{|\Delta|}$,
but for larger $x$, $S_\Delta(x)$ is constrained by the symmetry
\begin{equation}
\label{SDeltasymmetry}
S_{\Delta}(x)=S_{\Delta}(|\Delta|/x),
\end{equation}
following from the Poisson summation formula (see
\cite[pp.~13, 68, 70]{davenport}).

The point of symmetry of \eqref{SDeltasymmetry} gives an indication
of $|\Delta|$, and thus we can rule out small values of $|\Delta|$
essentially by drawing the graph of $S_\Delta(x)$.  More precisely,
for any given $B>0$, one can decide whether or not $|\Delta|\le B$ in
time\footnote{We omit the proof of this, but the main point is the fact that
$S_\Delta(e^x)$ is the Fourier transform of the complete
$L$-function $\Lambda(\frac12+it,\chi_\Delta)$, so it is essentially
band-limited.}
$O_\varepsilon\bigl(N^\varepsilon\sqrt{B}\bigr)$, which matches the
running time of Pollard--Strassen for the same task.  Moreover, if one
could find a method of computing the $\theta$-function $S_\Delta(x)$
substantially more quickly than by direct in-order summation, say in
time $O_\varepsilon\bigl(N^{\varepsilon}x^{1-\delta}\bigr)$ for some
$\delta\in(0,1)$, then this improves to $O_\varepsilon\bigl(N^\varepsilon
B^{\frac12(1-\delta)}\bigr)$; in particular, taking
$B=N^{\frac1{3-2\delta}}$ and falling back on Pollard--Strassen
to rule out $\ell\le\sqrt{N/B}$,
we would get an algorithm to certify $N$ squarefree in time
$O_\varepsilon\bigl(N^{\frac16(1-\frac{\delta}{3-2\delta})+\varepsilon}\bigr)$.

\section{The explicit formula}
\label{explicit}
Our main interest, however, is in algorithms that work in subexponential
time. This is difficult to attain in the above approach because we used
sums over integers $n$. It is well-understood in problems of this type
that one can do better by considering sums over primes, at the expense
of having to assume GRH.

To be precise, let
$L(s,\chi_\Delta)=\sum_{n=1}^\infty\chi_\Delta(n)n^{-s}$ be the Dirichlet
$L$-function corresponding to $\Delta\ne1$.  Assuming GRH,
the non-trivial zeros of $L(s,\chi_\Delta)$ may be written
as $\frac12\pm i\gamma_j(\Delta)$, $j=1,2,3,\ldots$, where
$0\le\gamma_1(\Delta)\le\gamma_2(\Delta)\le\ldots$, and each
ordinate is repeated with the appropriate multiplicity.\footnote{If
$L(s,\chi_\Delta)$ has a zero at $s=\frac12$ of multiplicity $m$,
then $m$ is necessarily even, and we take $\frac{m}2$ copies
of this zero, i.e.\ $\gamma_j(\Delta)=0$ for $j\le\frac{m}2$ and
$\gamma_{\frac{m}2+1}(\Delta)>0$.}  Further, let $g:[0,\infty)\to\C$
be a test function which is continuous of compact support, piecewise
smooth, and has cosine transform $h(t)=2\int_0^\infty g(x)\cos(tx)\,dx$.
Then the ``explicit formula'' for $L(s,\chi_\Delta)$ reads
\begin{equation}
\label{explicitformula}
\begin{aligned}
g(0)\log|\Delta|&=2\sum_{j=1}^{\infty}h\bigl(\gamma_j(\Delta)\bigr)
+2\sum_{n=1}^{\infty}\frac{\Lambda(n)\chi_\Delta(n)}{\sqrt{n}}g(\log{n})
\\
&+\,g(0)\log(8\pi e^\gamma)
-\int_0^\infty\frac{g(0)-g(x)}{2\sinh(x/2)}\,dx
+\chi_\Delta(-1)\int_0^\infty\frac{g(x)}{2\cosh(x/2)}\,dx,
\end{aligned}
\end{equation}
where $\Lambda$ is the von Mangoldt function.

If not for the sum over zeros $\gamma_j(\Delta)$, this would be exactly
what we seek, i.e.\ a formula for the conductor $|\Delta|$ in terms
of character values. Without knowledge of the zeros, we do not get such
an exact identity, but we can at least get an inequality in one
direction if the test function is chosen so that $h$ is non-negative,
i.e.\
\begin{equation}
\label{lowerbound}
\begin{aligned}
\log|\Delta|\ge
2&\sum_{n=1}^{\infty}\frac{\Lambda(n)\chi_\Delta(n)}{\sqrt{n}}g(\log{n})
+\log(8\pi e^\gamma)\\
&-\int_0^\infty\frac{1-g(x)}{2\sinh(x/2)}\,dx
+\chi_\Delta(-1)\int_0^\infty\frac{g(x)}{2\cosh(x/2)}\,dx,
\end{aligned}
\end{equation}
for any $g:[0,\infty)\to\R$ which is continuous of compact support,
satisfies $g(0)=1$, and has non-negative cosine transform.\footnote{A
function $g$ satisfying these conditions need not be piecewise smooth,
and in fact $\int_0^\infty\frac{1-g(x)}{2\sinh(x/2)}\,dx$ may
be divergent.
% Proof: If $g_{\nu,X}$ is the function from Lemma A.1 then
% $\int_0^\infty{1-g_{\varepsilon-\frac12,X}(x)}{2\sinh(x/2)}\,dx
% \gg_X\varepsilon^{-1}$ for small $\varepsilon>0$. Thus the integral
% diverges for $g=\sum_{n=1}^{\infty}2^{-n}g_{3^{-n}-\frac12,X}$.
However, since $g$ has non-negative cosine transform,
$\frac{1-g(x)}{2\sinh(x/2)}$ is non-negative, so we may interpret the
right-hand side of \eqref{lowerbound} as $-\infty$ whenever the integral
diverges. With that convention, a standard approximation argument shows
that \eqref{lowerbound} holds for all $g$ as indicated.}

In the next few subsections we explore some strategies for exploiting
\eqref{lowerbound} to prove that our given $d$ is squarefree.
The proofs of 
Propositions~\ref{optimalprop}--\ref{runningtimeprop}
below are given in the appendix.

\subsection{Varying the test function}
\label{varytestfunction}
Our first, and simplest, strategy is to search for a test function such
that the right-hand side of \eqref{lowerbound} is close to $\log|\Delta|$.
Naturally, we pay a price for ignoring the zero sum $Z=\sum_{j=1}^\infty
h(\gamma_j(\Delta))$, in that our estimate for $|\Delta|$ is a factor of
$e^{2Z}$ too small. We can still use this to prove that $d$ is squarefree
by ruling out values of $\ell\le e^Z$ using Pollard--Strassen or otherwise,
but this takes exponential time $\gg e^{Z/2}$ in the size of $Z$.

On the other hand, note that the sum over prime
powers in \eqref{lowerbound} is exponentially long, i.e.\ if $g$
has support $[0,X]$ then we need to compute the right-hand side of
\eqref{lowerbound} for $n$ up to $e^X$.  Thus, we would like $X$ not to
be very large.  However, if we choose $X$ too small then, by the
uncertainty principle, the cosine transform $h$ will be relatively
``wide'', so that the zero sum will typically be large.

Our first result shows that there is an optimal choice of test function
for each fixed $X$, and thus an optimal tradeoff between these two
exponential penalties.
\begin{proposition}
\label{optimalprop}
Let $\mathcal{C}(X)$ be the class of functions $g:[0,\infty)\to\R$
that are continuous, supported on $[0,X]$, have non-negative
cosine transform, and satisfy $g(0)=1$. For $g\in\mathcal{C}(X)$, let
$l(g)$ denote the right-hand side of \eqref{lowerbound}.
Then for every $X>0$ there exists $g_X\in\mathcal{C}(X)$ such
that $l(g_X)\ge l(g)$ for all $g\in\mathcal{C}(X)$.
\end{proposition}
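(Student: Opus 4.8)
The plan is to realize $\mathcal{C}(X)$ as a closed, convex, (sequentially) compact subset of a suitable topological vector space, show that $l$ is upper semicontinuous on it, and then invoke the extreme value theorem. First I would normalize: write $l(g)=A - I(g) + \chi_\Delta(-1)J(g)$, where $A=\log(8\pi e^\gamma)$ is constant, $J(g)=\int_0^\infty \frac{g(x)}{2\cosh(x/2)}\,dx$ is a bounded linear functional (since the kernel $\frac{1}{2\cosh(x/2)}$ is integrable on $[0,X]$ and $g$ is uniformly bounded once we control $\|g\|_\infty$), and $I(g)=\int_0^\infty \frac{1-g(x)}{2\sinh(x/2)}\,dx$ is the term that carries all the subtlety, because the integrand need not be integrable and $l(g)$ may equal $-\infty$. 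The prime-power sum does not appear in $l(g)$ as defined in Proposition~\ref{optimalprop}; rather $l(g)$ is the full right-hand side of \eqref{lowerbound}, which \emph{does} contain $2\sum_n \frac{\Lambda(n)\chi_\Delta(n)}{\sqrt n}g(\log n)$, a finite sum (support $[0,X]$) depending linearly and continuously on $g$ in the uniform norm. So the only genuinely dangerous term is $I(g)$.

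Next I would set up the topology. The natural move is to note that every $g\in\mathcal{C}(X)$ has non-negative cosine transform $h\ge 0$ with $h\in L^1$ (because $g(0)=1=\frac1\pi\int_0^\infty h(t)\,dt$ by Fourier inversion, so $h$ is a non-negative integrable function of total mass $\pi$) and $g(x)=\frac1\pi\int_0^\infty h(t)\cos(xt)\,dt$. Thus $\mathcal{C}(X)$ is parametrized by the set of non-negative measures $\mu$ on $[0,\infty)$ of mass $\pi$ whose cosine transform is supported in $[0,X]$; this is a weak-$*$ closed subset of a ball in the dual of $C_0$, hence weak-$*$ compact and metrizable. Along a weak-$*$ convergent sequence $\mu_n\to\mu$, the corresponding $g_n$ converge pointwise (indeed uniformly on compacta, since $\cos(xt)$ and its $x$-derivative $-t\sin(xt)$ are not uniformly bounded in $t$ — here I would instead integrate by parts using the support condition, or argue via equicontinuity from $|g_n'(x)|\le \frac1\pi\int t\,d\mu_n$, which needs a second-moment bound; the cleanest fix is to observe $g_n(x)\le g_n(0)=1$ and use that the family is equi-Lipschitz after noting the support constraint forces decay of $h$, giving Arzelà–Ascoli directly on $C[0,X]$). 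Either way one extracts a uniformly convergent subsequence with limit $g\in\mathcal{C}(X)$: closedness under this limit uses that non-negativity of the cosine transform, $g(0)=1$, and $\mathrm{supp}\,g\subseteq[0,X]$ all pass to uniform limits.

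Then I would verify upper semicontinuity of $l$. The linear terms (prime sum and $J$) are continuous under uniform convergence on $[0,X]$. For $I(g)$: since the integrand $\frac{1-g(x)}{2\sinh(x/2)}\ge 0$ for every $g\in\mathcal{C}(X)$ (as remarked in the paper's footnote, non-negative cosine transform plus $g(0)=1$ forces $g(x)\le 1$), Fatou's lemma gives $I(g)\le\liminf_n I(g_n)$ along a uniformly convergent subsequence, hence $-I$ is upper semicontinuous, and therefore so is $l$. An upper semicontinuous function on a (sequentially) compact set attains its supremum — and the supremum is finite because the constant function $g\equiv\mathbf{1}_{[0,X]}$ smoothed appropriately, or the Fejér-kernel choice, gives some $g$ with $l(g)>-\infty$ — so there exists $g_X\in\mathcal{C}(X)$ with $l(g_X)=\sup_{g\in\mathcal{C}(X)} l(g)\ge l(g)$ for all $g$.

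The main obstacle is the compactness/continuity step: because the cosine-transform pairing $\cos(xt)$ is not compactly supported in the $t$-variable, weak-$*$ convergence of the measures $\mu_n$ does not obviously yield uniform convergence of the $g_n$, and a priori $I(g_n)$ could behave badly near $x=0$ where the kernel $\frac1{2\sinh(x/2)}$ blows up. Controlling the behaviour near $0$ — i.e.\ getting enough equicontinuity of $\{g_n\}$ at the origin to ensure $1-g_n(x)=O(x^2)$ uniformly, or else accepting $l\equiv-\infty$ on the relevant part — is the delicate point, and I expect the argument to route through the support constraint (which, via Paley–Wiener-type reasoning, forces the tail of $h$ to be small and hence $g$ to be smooth) rather than through soft functional analysis alone.
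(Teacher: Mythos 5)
Your overall strategy — maximizing sequence, compactness, upper semicontinuity — matches the paper's, but the pivotal compactness/convergence step is left as an acknowledged gap, and the fix you propose for it does not work. Weak-$*$ convergence of the measures $\mu_n=\frac1\pi h_n\,dt$ against $C_0$ neither preserves the total mass (mass can escape to infinity, so the limit need not satisfy $g(0)=1$) nor yields convergence of $g_n(x)=\frac1\pi\int\cos(xt)\,d\mu_n(t)$, since $\cos(xt)\notin C_0$. You hope that the support constraint on $g$ ``forces decay of $h$'' via Paley--Wiener, but Paley--Wiener only gives that each $h_n$ is entire of exponential type $\le X$; it provides no decay rate that is uniform across the family, and certainly no equi-Lipschitz bound on the $g_n$ (which would require a uniform first-moment bound on the $\mu_n$ that is simply not available a priori).

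What is actually needed, and what the paper supplies, is a tightness estimate that comes from the lower bound itself. Rewriting the archimedean terms of $l(g)$ as $-\frac1\pi\int_\R \Re\frac{\Gamma_\R'}{\Gamma_\R}\!\left(\frac12+a+it\right) h(t)\,dt$ and applying Stirling's formula shows these terms equal $-\frac1\pi\int_0^\infty \log(1+t)\,h(t)\,dt + O(1)$; since the prime-power sum is $O(1)$ uniformly in $g\in\mathcal{C}(X)$, and since a maximizing sequence can be taken with $l(g_n)$ bounded below, one gets $\int_0^\infty \log(1+t)\,h_n(t)\,dt\le C$ uniformly in $n$. This weighted $L^1$ bound is precisely the uniform tightness you are missing: it controls $\int_T^\infty h_n\le C/\log(1+T)$, prevents mass escape, upgrades locally uniform convergence of $h_n$ (which the paper obtains by a diagonal argument on the Taylor coefficients $h_n^{(2j)}(0)$, though your weak-$*$ framework would also work once tightness is in hand) to uniform convergence $g_n\to g_\infty$ on $[0,X]$, and shows $g_\infty\in\mathcal{C}(X)$. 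With that convergence established your Fatou argument for $-I(g)$ is fine, as is the treatment of the prime sum and $J(g)$; but none of it can be reached without first proving the tightness, which is the real content of the proposition.
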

We remark further that if $g\in\mathcal{C}(X)$ then its cosine
transform $h$ is band-limited, and so, by Jensen's formula, $h$ has at
most $O_X(T)$ zeros in the interval $[-T,T]$ for large $T$ (see
\cite[p.~16]{martin}).
Since it is known that $L(s,\chi_\Delta)$ has $\gg T\log{T}$ distinct
zeros with imaginary part in $[-T,T]$, under GRH the zero sum in
\eqref{explicitformula} cannot vanish, so
that \eqref{lowerbound} is a strict inequality for any fixed $X$,
i.e.\ $\log|\Delta|>l(g_X)$. However, it is easy to see that
$l(g_X)$ tends continuously and monotonically to $\log|\Delta|$
as $X\to\infty$.

Although Prop.~\ref{optimalprop} is an existence result only, one can try to
solve for the optimal test function $g_X$ by approximating $\mathcal{C}(X)$
using a sufficiently rich, finite-dimensional space of functions.
For instance, let $M$ be a non-negative integer, and consider step functions $f$
of the form
\begin{equation}
\label{fx}
f(x)=\sum_{n=-M}^M
a_n\mathbf{1}_{(-1/2,1/2)}\!\left(\frac{2M+1}{X}x-n\right)
\quad\text{for }x\in\R,
\end{equation}
for arbitrary real coefficients $a_n$. If we take $g$ to be the
autocorrelation of $f$, i.e.\ $g(x)=\int_\R f(y)f(x+y)\,dy$,
then $g$ has cosine transform $\bigl|\hat{f}(t)\bigr|^2\ge0$,
the right-hand side of \eqref{lowerbound} is a
quadratic form in the $a_n$, and the condition $g(0)=1$ amounts to an
$L^2$-normalization. 
Thus, we can find the optimal lower bound for this
family of test functions by computing the matrix of the form and finding
its largest eigenvalue.\footnote{If $A$ is the matrix associated with the quadratic form and 
$c := (2M+1)/X$, then $\max_{|a|^2= c} a^t A a = c \lambda_1,$ where $\lambda_1$ 
is the largest eigenvalue of $A$, $a := (a_{-M},\ldots,a_M)$, and the 
condition $|a|^2 = c $ is equivalent to $g(0)=1$.} 
It is not hard to see that this family comes
arbitrarily close to the optimal $g_X$ as $M\to\infty$, although it may
be the case that $g_X$ is highly oscillatory, meaning that we would need
to take $M$ very large before finding a close approximation to it.

\subsection{Twisting}
\label{twistsection}
A second strategy, which performs well in practice,
is to ``twist'' our given quadratic character $\chi_d$
by other characters $\chi_q$, and look for a $q$ for which the lower bound
in \eqref{lowerbound} is favorable. This is related to the first
strategy since, by Fourier analysis, varying the test function amounts to
considering combinations of the twists by $n^{it}$ for various $t$.
%\footnote{We will sometimes write $t$-twists and $q$-twists to mean
%twisting by $n^{it}$ and $\chi_q$.} 
Twists by quadratic characters have the added advantage of zero repulsion around
the central point, as we explain in detail in \S\ref{rmtcalc}.

In other words, if we run out of luck with our given
value of $d$ then we can multiply it by $q\in\mathcal{F}$ relatively
prime to $d$ and ask if the product is a fundamental discriminant. This
operation also introduces a penalty, since \eqref{lowerbound} becomes
a lower bound for $\log|q\Delta|$, so we have to subtract $\log|q|$:
\begin{equation}
\label{lowerbound2}
\begin{aligned}
\log|\Delta|\ge -&\log|q|+
2\sum_{n=1}^{\infty}\frac{\Lambda(n)\chi_{q\Delta}(n)}{\sqrt{n}}g(\log{n})
+\log(8\pi e^\gamma)\\
&-\int_0^\infty\frac{1-g(x)}{2\sinh(x/2)}\,dx
+\chi_{q\Delta}(-1)\int_0^\infty\frac{g(x)}{2\cosh(x/2)}\,dx.
\end{aligned}
\end{equation}

What we gain by this strategy is the hope of finding a twist
$\chi_{q\Delta}$ such that the low-lying zeros of $L(s,\chi_{q\Delta})$
are unusually sparse, so that the zero sum $\sum_{j=1}^\infty
h(\gamma_j(q\Delta))$ can be made small even with a relatively simple
choice of $g$.  For instance, we might hope that $L(s,\chi_{q\Delta})$
has a large zero gap around the central point. In that case, we have
the following.
\begin{proposition}
\label{testfunctionprop}
Suppose that $L(s,\chi_{q\Delta})$ satisfies GRH and
has no non-trivial zeros
with imaginary part in $(-\delta,\delta)$.
Set $X=2\delta^{-1}(A+\log\log|q\Delta|)$ for some $A\ge0$.
Then there is an explicit $g\in\mathcal{C}(X)$ whose cosine transform $h$
satisfies
\begin{equation}
\label{zerosumbound}
\sum_{j=1}^{\infty}h(\gamma_j(q\Delta))
\ll\frac{e^{-A}X}{(\log\log|q\Delta|)^{3/2}},
\end{equation}
with an absolute and effective implied constant.
\end{proposition}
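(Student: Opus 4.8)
The plan is to work on the Fourier side throughout. By Paley--Wiener, sending $g$ to its cosine transform $h(t)=2\int_0^\infty g(x)\cos(tx)\,dx$ identifies $\mathcal{C}(X)$ with the set of even entire functions $h$ of exponential type $\le X$ that are non-negative and integrable on $\R$ and satisfy $\int_\R h(t)\,dt=2\pi$ — the last condition being exactly $g(0)=1$ — while conversely the inverse cosine transform of such an $h$ is real, even, continuous and supported in $[-X,X]$, hence restricts to a member of $\mathcal{C}(X)$. So it suffices to write down a concrete $h$ whose mass is concentrated so tightly near $0$ that, thanks to the zero-free strip $(-\delta,\delta)$, the sum $\sum_j h(\gamma_j(q\Delta))$ is forced to be small.

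For the construction, fix $\delta':=\tfrac45\delta$ and take $h:=2\pi\,\psi_0^2$, where $\psi_0$ is the zeroth prolate spheroidal wave function of bandwidth $X/2$ concentrated on $[-\delta',\delta']$, normalized by $\int_\R\psi_0^2=1$ (so $\int_{-\delta'}^{\delta'}\psi_0^2=\lambda_0$ is the top eigenvalue). Then $h\ge0$, $h$ has exponential type $\le X$ since $\psi_0$ has type $X/2$, and $\int_\R h=2\pi$; let $g\in\mathcal{C}(X)$ be the associated test function. The only facts needed about $\psi_0$ are the classical exponential decay of the spectral leakage,
\[
1-\lambda_0=\int_{|t|>\delta'}\psi_0(t)^2\,dt\ \ll\ \sqrt{c}\,e^{-2c},\qquad c:=\tfrac12 X\delta'=\tfrac25 X\delta=\tfrac45\bigl(A+\log\log|q\Delta|\bigr),
\]
together with an off-interval size bound $\psi_0(t)^2\ll(1-\lambda_0)/t^2$ for $|t|\ge\delta'$, both with absolute constants. (Any explicit band-limited surrogate for $\psi_0$ with comparably small leakage would do equally well; all that is used below is leakage $\ll e^{-c_0 X\delta'}$ for some absolute $c_0>\tfrac58$.)

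Now estimate the zero sum. Put $\delta_0:=\min(1/X,\delta/10)$, small enough that the standard reverse mean-value inequality for functions of exponential type $X/2$ gives $\psi_0(\gamma)^2\ll X\int_{|t-\gamma|\le\delta_0}\psi_0(t)^2\,dt$ for every real $\gamma$. Summing over the ordinates $\gamma_j=\gamma_j(q\Delta)\ge\delta$ of $L(s,\chi_{q\Delta})$ and invoking the unconditional short-window zero count $\#\{j:\gamma_j\in[u,u+\delta_0]\}\ll\log\bigl(|q\Delta|(|u|+2)\bigr)$, we obtain
\[
\sum_{\gamma_j\ge\delta}\psi_0(\gamma_j)^2\ \ll\ X\int_{|t|\ge\delta-\delta_0}\psi_0(t)^2\,\log\bigl(|q\Delta|(|t|+2)\bigr)\,dt.
\]
Since $\delta-\delta_0\ge0.9\,\delta>\delta'$, the integral runs over the leakage region only; splitting it at $|t|=1$ and inserting the two displayed bounds shows it is $\ll(1-\lambda_0)\log|q\Delta|\cdot\mathrm{poly}(c)$. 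Writing $Y=\log\log|q\Delta|$ and using $e^{-2c}=e^{-\frac85 A}(\log|q\Delta|)^{-8/5}$, this yields
\[
\sum_{j=1}^{\infty}h(\gamma_j(q\Delta))\ =\ 2\pi\sum_{\gamma_j\ge\delta}\psi_0(\gamma_j)^2\ \ll\ X\,e^{-\frac85 A}\,(\log|q\Delta|)^{-3/5}\,\mathrm{poly}(Y),
\]
which is $\ll e^{-A}X\,Y^{-3/2}$ with an absolute, effective constant, since $(\log|q\Delta|)^{-3/5}$ defeats any fixed power of $Y$ and $e^{-\frac85 A}\le e^{-A}$; the finitely many $q\Delta$ with $X\delta<10$ are absorbed into the constant. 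This establishes \eqref{zerosumbound} with room to spare.

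The main obstacle is the transition at the last two displays: converting the $L^2$-concentration of $\psi_0$ into a bound on the \emph{pointwise} sum over zero ordinates. Two things must be arranged at once. First, the $\delta_0$-thickening forced by the mean-value inequality must stay strictly inside the buffer between $\delta'$ and $\delta$, so that the integral never picks up the bulk $\int_{-\delta'}^{\delta'}\psi_0^2$; this is why the concentration interval is taken a constant factor shorter than the known zero-free strip. Second, the factor $\log|q\Delta|$ lost to the density of zeros must be swallowed by the leakage, which forces the concentrator to be essentially optimal: a high power of the Fej\'er kernel, say, has leakage only $\asymp e^{-cX\delta}$ with $c<1$, leaving a positive power of $\log|q\Delta|$, whereas the prolate's effective rate $c_0\approx1$ exceeds the threshold $\tfrac58$. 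A subsidiary point is to fix citable forms of the two displayed properties of $\psi_0$ (Slepian; Landau--Widom; Fuchs); if one wants $g$ completely elementary, one replaces $\psi_0$ by an explicit band-limited approximation and proves the leakage bound directly by a stationary-phase or contour estimate, taking care that the constant still exceeds $\tfrac58$.
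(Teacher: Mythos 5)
Your overall strategy is genuinely different from the paper's. The paper builds its test function explicitly as the autocorrelation of the Gegenbauer-type window $(1-x^2)^\nu$, whose cosine transform has a concrete pointwise bound $h_{\nu,X}(t)\ll\nu^{-1/2}e^{-2\nu}X\,|4\nu/(Xt)|^{2\nu+2}$ off the zero-free strip; it then never appeals to a zero-density estimate, but instead introduces a second kernel $H_\alpha$ satisfying $H_\alpha(t)\gg\alpha/\max((\alpha t)^2,1)$, compares $h_{\nu,X}\ll(\text{small})\cdot H_\alpha$ pointwise on $|t|\ge\delta$, and controls $\sum_j H_\alpha(\gamma_j)$ by feeding $G_\alpha$ back into the explicit formula, giving $\sum_j H_\alpha(\gamma_j)\ll\log|q\Delta|$ directly. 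Your approach replaces the explicit kernel with a prolate spheroidal wave function and the explicit-formula bookkeeping with the unconditional short-window zero count. Conceptually parallel, but the mechanism for controlling the zeros is different.

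However, the proof as written has a genuine gap at the step you yourself flag as the main obstacle. The asserted ``standard reverse mean-value inequality'' $\psi_0(\gamma)^2\ll X\int_{|t-\gamma|\le\delta_0}\psi_0(t)^2\,dt$ with a sharp cutoff at $\delta_0\asymp1/X$ does not hold with an absolute constant for functions of exponential type $X/2$ in $L^2(\mathbb R)$. What one can prove, via the reproducing kernel for $PW_{X/2}$, is a two-term bound of the form $\psi_0(\gamma)^2\ll X\int_{|t-\gamma|\le a}\psi_0^2 + a^{-1}\|\psi_0\|_2^2$ (or a Schwartz-tail variant $\psi_0(\gamma)^2\ll X\int\psi_0(t)^2(1+X|t-\gamma|)^{-N}\,dt$ with $N$ fixed). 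In either form the error term is only \emph{polynomially} small in $X\delta$, because the reproducing kernel for a space of exponential type $X/2$ cannot decay faster than polynomially. Once you sum over the zeros, whose nearest ordinate is only at distance $\asymp\delta$ from the main mass of $\psi_0$ on $[-\delta',\delta']$, that polynomial tail lets the bulk $\lambda_0\approx 1$ of $\|\psi_0\|_2^2$ leak back into the sum, producing a contribution like $X(X\delta)^{-N}\log|q\Delta|$; since $\log|q\Delta|=e^Y$ with $Y=\log\log|q\Delta|$, no fixed $N$ can beat it, and the exponentially small $1-\lambda_0$ never enters. The paper avoids this by working directly with a kernel whose pointwise decay exponent $2\nu+2$ \emph{grows} with $X$, so that the off-strip decay is effectively exponential in $X\delta$, and by bounding the resulting zero sum through the explicit formula rather than any local-to-$L^2$ transfer. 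If you want to keep the prolate route, you need an explicit pointwise decay for $\psi_0$ itself on $|t|\ge\delta$ that carries the exponentially small factor $\sqrt{1-\lambda_0}$ — your tool~(b) — proved with effective constants, and then you can drop the mean-value step entirely; but that bound is not ``classical'' in the form stated, and establishing it would essentially reproduce the work the paper does with its explicit kernel.
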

In other words, there is a test function $g$ with support of
size inversely proportional to the size of the zero gap for which the
zero sum is relatively small. Thus, ruling out small values of $\ell$ to
complete the proof that $d$ is squarefree is fast compared to evaluating
the explicit formula.\footnote{In fact, as a by-product of evaluating
the explicit formula, we will test $d$ for divisibility by all
primes $p<e^X$. Thus, if $\sum_j h(\gamma_j(q\Delta))<X$ then no
additional work is necessary to prove that $d$ is squarefree.}

Although it is difficult to ascertain directly for
a given $q$ whether $L(s,\chi_{q\Delta})$ has a large zero gap, we can
simply try computing the lower bound \eqref{lowerbound2} using the test
function given by Prop.~\ref{testfunctionprop} for a particular desired
value of $\delta$. We may repeat this procedure for many $q$ until we
find one which is good enough, and then use the quadratic form approach
with a relatively small matrix to refine the choice of test function.

The crucial question is thus how large of a zero gap can one expect to
find by searching through various $q$. On average, one expects the first
zero gap around the central point to be about $2\pi/\log{|q\Delta|}$,\footnote{
More precisely, the Random Matrix model for the family of $L$-functions in question suggests that the mean of the first zero gap should be this quantity multiplied by a constant whose value is approximately 0.78, see 
\cite{katz-sarnak,rubinstein}} which is of no use in Prop.~\ref{testfunctionprop}.
On the other hand,
if we found $q$ of modest size for which the first zero gap
was on the order of $1/\sqrt{\log|q\Delta|}$, say,
then we would have a fast
algorithm for proving that $d$ is squarefree.\footnote{If there
is a constant $\varepsilon>0$ such 
that one can always find a
$\gamma_1(q\Delta) \ge (\log\log |\Delta|)^{1+\varepsilon}/\log|\Delta|$, then
Prop.~\ref{testfunctionprop} already allows one to certify that an integer is
squarefree in subexponential time (on the GRH). It would be interesting
to see if one could push this line of thought to an improvement of the
$O(N^{1/6+o(1)})$ time bound of Pollard--Strassen,
but we do not do so here.}

To make this more precise,
anticipating a subexponential running time
on the order of $\exp\bigl((\log|\Delta|)^\theta\bigr)$, for $\theta>0$ we define
\begin{align*}
M_\Delta(\theta)&=
\max\Bigl\{\gamma_1(q\Delta):q\in\mathcal{F}, (q,\Delta)=1,
|q|\le\exp\bigl((\log|\Delta|)^\theta\bigr)\Bigr\},\\
\eta_\Delta(\theta)&=-\frac{\log M_\Delta(\theta)}{\log\log|\Delta|},
\quad
\eta_\infty(\theta)
=\limsup_{\substack{\Delta\in\mathcal{F}\\|\Delta|\to\infty}}
\eta_\Delta(\theta),
\quad
\theta^*=\inf\{\theta>0:\eta_\infty(\theta)\le\theta\}.
\end{align*}
Thus, $\eta_\Delta$ is a logarithmic measure of the largest gap
size that we encounter among the twists by $q\in\mathcal{F}$ with
$|q|\le\exp\bigl((\log|\Delta|)^\theta\bigr)$, with
$\eta_\Delta=1$ corresponding to an average gap, and $\eta_\Delta<1$
corresponding to larger gaps. Since, \emph{a priori}, we have no
information about our given discriminant, we take the worst case,
$\eta_\infty$, over all large values of $\Delta$.
Finally, $\theta^*$ measures the point at which the size of the
twisting set matches the expected length of the prime
sum that we need to evaluate in Prop.~\ref{testfunctionprop}.
Combining this with a brute-force search strategy,
we obtain the following.
%Thus, if we search through all $q\in\mathcal{F}$ with
%$|q|\le\exp\bigl((\log|\Delta|)^{\theta^*}\bigr)$, we expect to find
%at least one for which the first zero gap of $L(s,\chi_{q\Delta})$
%has size on the order of $(\log|\Delta|)^{-\theta^*}$. Combining this
%with Prop.~\ref{testfunctionprop} and a brute-force search strategy,
%we obtain the following.
\begin{proposition}
\label{runningtimeprop}
Assume GRH for quadratic Dirichlet $L$-functions.
There is an algorithm that takes as input a positive integer $N$ and outputs
either a non-trivial square factor of $N$ or a proof that
$N$ is squarefree. If $N$ is squarefree then the algorithm runs in time
$O\bigl(\exp[(\log{N})^{\theta^*+o(1)}]\bigr)$.
\end{proposition}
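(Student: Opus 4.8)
The plan is to wrap Proposition~\ref{testfunctionprop} in an exhaustive search that knows none of the data it would like to know --- neither $\Delta$, nor $\ell$ (where $d=\Delta\ell^2$, $\Delta\in\mathcal{F}$), nor $\theta^*$, nor the locations of the low zeros --- and then to account for the running time. After the usual reductions (if $N$ is even write $N=2^a m$ with $m$ odd, so that $N$ is squarefree iff $a\le1$ and $m$ is, and recurse on $m$; if $N$ is a perfect square, output $\sqrt N$), we may assume $N>1$ is odd and not a square, put $d=(-1)^{(N-1)/2}N\equiv1\pmod4$, and ask whether $\ell=1$. The algorithm runs through an increasing sequence of \emph{stages} indexed by a parameter $\theta>0$, taken from a grid whose mesh tends to $0$ as $\log N\to\infty$ and processed in increasing order, with a geometrically growing time cap per stage. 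In the stage for a given $\theta$ it enumerates the fundamental discriminants $q$ with $|q|\le\exp((\log N)^\theta)$ and $\gcd(q,d)=1$ --- these have $N^{o(1)}$ bits, so they, together with the squarefree tests needed to recognize them, can be produced in time $\exp((\log N)^{\theta+o(1)})$; any $q$ with $\gcd(q,d)>1$ is discarded after checking whether the common factor reveals a square divisor of $N$. For each surviving $q$ it runs $\delta$ through a geometric grid $\delta=2^{-k}$, with a per-$q$ cap (described below) on how far $k$ may grow, and for each $(q,\delta)$ it forms the test function $g=g_{q,\delta}\in\mathcal{C}(X)$ of Proposition~\ref{testfunctionprop} with $X=2\delta^{-1}(A+\log\log|qd|)$, where $A$ is an absolute constant chosen via the effective implied constant in \eqref{zerosumbound} so that $\sum_{j\ge1}h(\gamma_j(q\Delta))\le X/2$ whenever $L(s,\chi_{q\Delta})$ has no zeros in $(-\delta,\delta)$.

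For each such $g$ the algorithm evaluates $\ell(g)$, the right-hand side of \eqref{lowerbound2} with $\chi_{qd}$ in place of $\chi_{q\Delta}$. This evaluation touches every prime power below $e^X$, so along the way, for each prime $p<e^X$, it learns whether $p\mid d$ and, if so, whether $p^2\mid N$; should it ever find such a $p^2$ it halts and returns it. If it does not, then no prime below $e^X$ divides $\ell$, hence $\chi_{qd}(n)=\chi_{q\Delta}(n)$ for all $n<e^X$, so $\ell(g)$ equals the genuine right-hand side of \eqref{lowerbound2}, which by the explicit formula \eqref{explicitformula} for $\chi_{q\Delta}$ (with $g(0)=1$) equals $\log|\Delta|-2\sum_{j\ge1}h(\gamma_j(q\Delta))$; since $|\Delta|=N/\ell^2$ this is $\log N-2\log\ell-2\sum_{j\ge1}h(\gamma_j(q\Delta))$. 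As $h\ge0$ this forces $0\le2\log\ell\le\log N-\ell(g)$, so whenever the computed value satisfies $\ell(g)>\log N-2X$ we may conclude $\ell<e^X$; combined with the absence of a square prime factor of $N$ below $e^X$, this forces $\ell=1$, i.e.\ $N$ squarefree, which the algorithm then reports. Thus the algorithm never errs --- it either exhibits a square divisor or correctly certifies squarefreeness --- and it terminates on every input: if $N$ is not squarefree, its least prime $p$ with $p^2\mid N$ is exposed as soon as $X$ grows past $\log p$.

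It remains to bound, for squarefree $N$, the stage at which the algorithm halts. Now $\Delta=d$, so $|q\Delta|=|qd|$ and $M_d(\theta)$ is exactly the largest zero gap about the central point among the available twists; picking $q_0$ attaining it and $\delta$ at the grid point just below $\gamma_1(q_0d)=M_d(\theta)$, Proposition~\ref{testfunctionprop} gives $\sum_{j\ge1}h(\gamma_j)\le X/2$, hence $\ell(g)=\log N-2\sum_{j\ge1}h(\gamma_j)\ge\log N-X>\log N-2X$, and the stage succeeds on reaching $(q_0,\delta)$, provided the relevant support length $X\asymp M_d(\theta)^{-1}\log\log N=(\log N)^{\eta_d(\theta)+o(1)}$ stays under the per-$q$ cap. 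Since $\theta\mapsto M_\Delta(\theta)$ is non-decreasing, $\eta_\infty$ is non-increasing, so $\eta_\infty(\theta)<\theta$ for every $\theta>\theta^*$; the $\limsup$ defining $\eta_\infty$ then gives $\eta_d(\theta)\le\eta_\infty(\theta)+o(1)<\theta$ for such $\theta$ and $N$ large, so a per-$q$ cap of $\exp((\log N)^{\theta+o(1)})$ steps is enough. The whole stage for $\theta$ then costs $\exp((\log N)^\theta)$ twists times a $(\log N)^{o(1)}$ grid of $\delta$ times at most $\exp((\log N)^{\theta+o(1)})$ per $(q,\delta)$, i.e.\ $\exp((\log N)^{\theta+o(1)})$; since stages are run in increasing order of $\theta$, the cumulative cost through any stage is dominated by that stage. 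Hence the algorithm halts in time $\exp((\log N)^{\theta+o(1)})$ as soon as $\theta$ exceeds $\theta^*$, and --- the mesh of the $\theta$-grid shrinking with $N$ --- the halting $\theta$ tends to $\theta^*$, giving the bound $O\bigl(\exp[(\log N)^{\theta^*+o(1)}]\bigr)$. (Finiteness of $\theta^*$, in fact $\theta^*\le1$, follows from a count over the $\gg|\Delta|$ twists available at $\theta=1$, so the process does terminate for squarefree $N$.)

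The one place where the argument rests on a genuine idea rather than bookkeeping is Proposition~\ref{testfunctionprop} itself, which we are assuming. Granting it, the work is to check that the search is correctly instrumented --- that it can never overstate $|\Delta|$, that it detects square divisors en route, and that $\ell(g)>\log N-2X$ really does certify $\ell=1$ --- and to carry out the running-time accounting. I expect the main obstacle to be that accounting near $\theta=\theta^*$: one must pass from the $\limsup$ in the definition of $\eta_\infty$ to an estimate valid for the particular $\Delta$ at hand, and dovetail over the finite grids of $\theta$ and $\delta$ and over the per-$q$ caps so that the two competing exponential penalties --- the size $\exp((\log N)^\theta)$ of the twisting set and the length $\exp((\log N)^{\eta_\infty(\theta)+o(1)})$ of the prime sum needed --- are balanced precisely where they cross, namely at $\theta=\theta^*$. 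It is this delicate balancing that produces, and forces, the $o(1)$ in the exponent of the running time.
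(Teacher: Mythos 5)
Your proof is correct and takes essentially the same approach as the paper's: wrap Proposition~\ref{testfunctionprop} in an exhaustive search over twists $q$ and gap parameters, exploit the side-effect that evaluating the prime sum trial-divides $N$ by all primes up to $e^X$, and certify squarefreeness once the computed lower bound exceeds $\log N - 2X$. The only difference is organizational: the paper iterates $Q=2^k$ and uses a \emph{single} $\delta=1/\log Q$ (hence a single test function $g_{\nu,X}$) for all $q$ of a given stage, whereas you additionally run $\delta$ through a geometric grid per $q$ --- this is harmless extra work absorbed by the $o(1)$, and both proofs make the same pass from the $\limsup$ defining $\eta_\infty$ to a concrete bound for the $\Delta$ at hand via a threshold $N_0(\varepsilon)$.
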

Note that the assumption of GRH in the proposition applies to the
certificates generated by the algorithm as well as its running time
analysis. 

\subsection{Examples}
In Figure ~\ref{largegapexample}, we give a basic illustration of the 
 favorable situation of a large gap around the central point.
We chose $L(s,\chi_d)$, where $d=1548889$ is a 
fundamental discriminant, because it has a gap 
size $\approx 1.747424$ there, which is about $4.5$ times
the average $0.78 \times 2\pi/\log (d/(2\pi))$. Therefore, we expect
the lower bound \eqref{lowerbound} to be quite good 
even with a simple choice of $g$. 
For instance, if
$M=0$ and $a_0=1/\sqrt{X}$ in \eqref{fx}, we obtain $g(x)=\max(0,1-|x|/X)$
and $h(t)=X \sin^2(Xt/2)/(Xt/2)^2$. Choosing $X=7/2$, we have
$2\sum_{j\ge 1} h(\gamma_j(d)) \approx 6.73$ (by computing the zeros explicitly using {\tt lcalc}), and so 
the lower bound \eqref{lowerbound} would be $\log d-6.73\approx 7.5$.
This would have sufficed to prove that $d$ is squarefree, since in computing 
the prime sum of the explicit formula we would have checked that $d$ 
has no factor $\le e^{7/2}$, and so certainly no factor $\le e^{6.73/2}$. 
In particular, \eqref{lowerbound} allows one to certify that $d$ is squarefree 
from the primes $\le e^{7/2} \approx 33$ only, which is
is already better than trial division. Thus, our strategy can lead to a gain 
even for small $d$.
\begin{figure}[!ht]
\includegraphics[scale=0.33, trim = 0 15 0 30, clip]{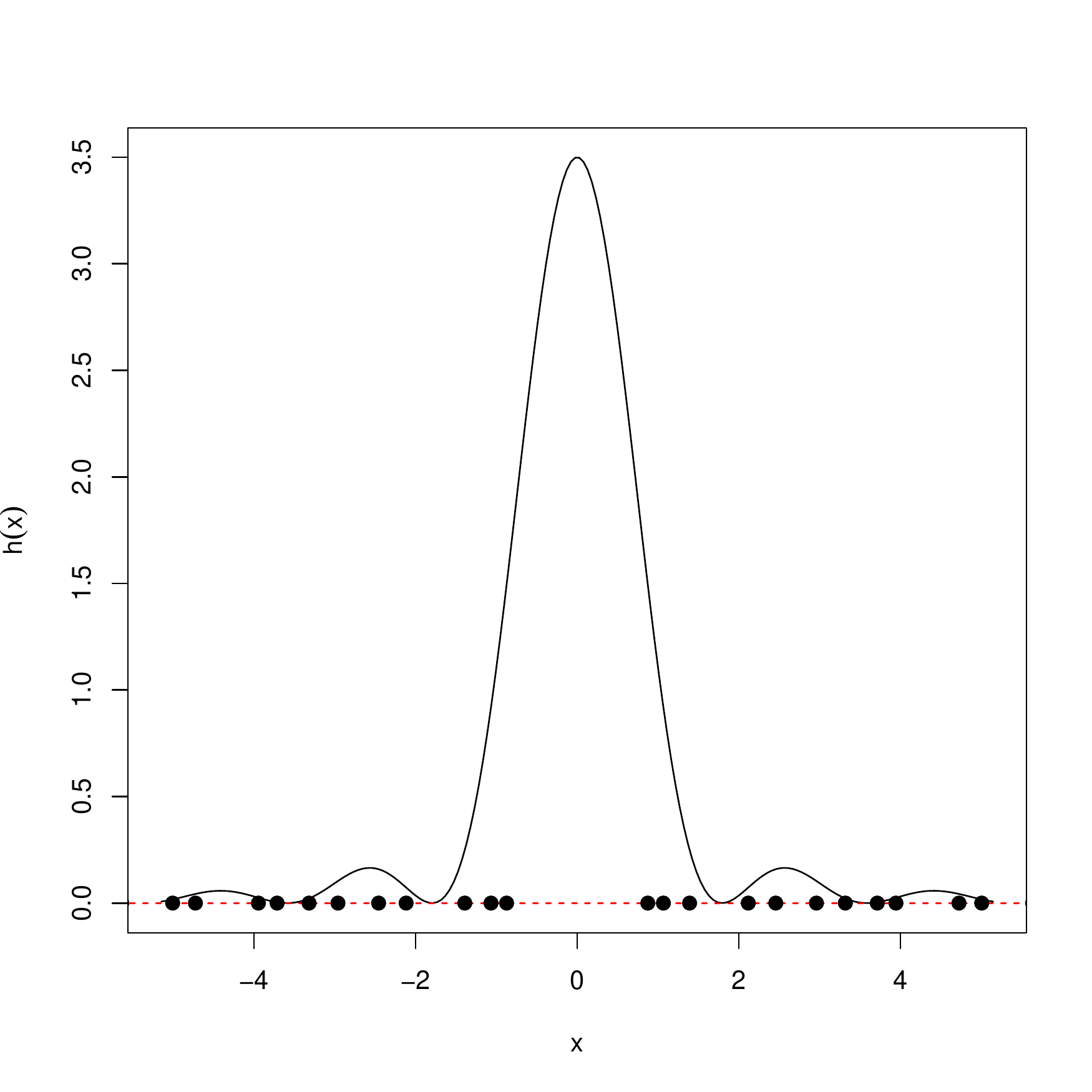}
\caption{Large zero gap around the central point of $L(s,\chi_{1548889})$, 
together with the test function on the zero side 
$h(t)=8\sin^2(7t/4)/(7t^2)$ resulting from $M=0$ and $X=7/2$.}
\label{largegapexample}
\end{figure}

The behavior of the lower bound as $X$ increases
is worth noting. We illustrate it for $L(s,\chi_d)$,
Figure~\ref{lowerboundexamples} (left plot), using the same simple 
choice of $g$ as
before. The overall shape of the plot is typical for the case of 
a large gap, in that there is an initial
(good) region where the lower bound increases steeply, 
followed by an inevitable, unless $L(1/2,\chi_d)=0$, region of small 
oscillations. If the gap about the center is 
not particularly large, however, then the
initial good region will be much smaller.
This is illustrated in Figure ~\ref{lowerboundexamples} 
(right plot) using the $L$-function of a randomly chosen fundamental discriminant, 
$L(s,\chi_{2000005})$, which has an average-sized gap of $\approx 0.515984$
about the center.
Notice that there is a wide good region later on in the plot, but 
it comes in too late to be useful in our algorithm. 
The main point is that the absence of zeros near $s=1/2$ allows the sum over prime powers to capture
the bulk of the r.h.s.\ of the explicit formula 
\eqref{explicitformula} with a smaller choice of $X$ (i.e. a more compactly supported $g$,
and slower decay for $h$ on the zeros sum).
\begin{figure}[!ht]
\centering
\includegraphics[scale=0.33, trim = 25 15 0 20,clip]{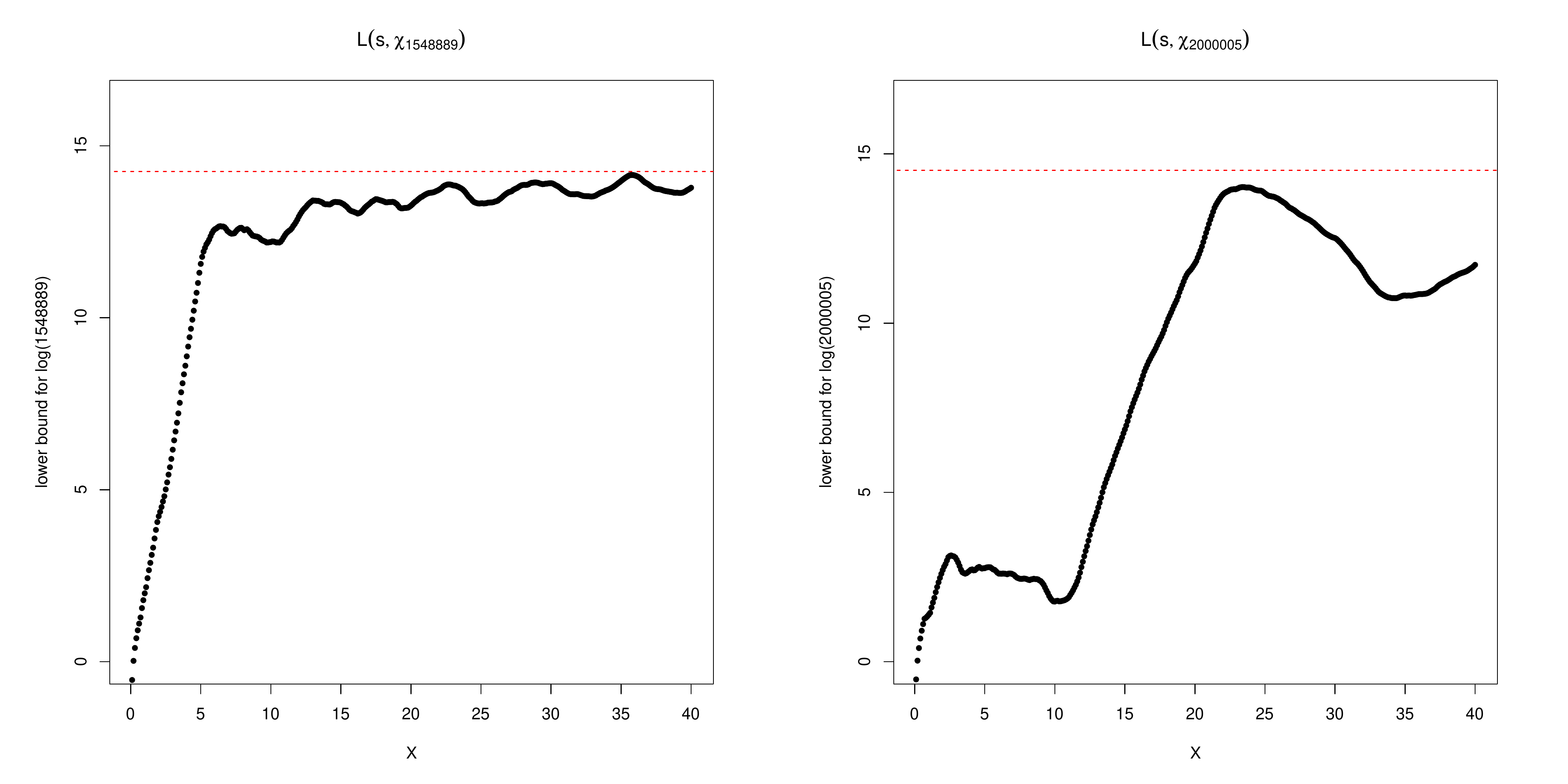} 
\caption{Behavior of the lower bound \eqref{lowerbound} 
as $X$ increases: The case of a large gap (left) compared 
with the case of an average gap.}
\label{lowerboundexamples}
\end{figure}

\section{Complexity}
By computing the $1$-level density of the family of twists by $\chi_q$,
$q\in\mathcal{F}$, 
one can see that $\eta_\infty(\theta)\le1$ for $\theta>1$, so that
$\theta^*\in[0,1]$. However, the algorithm of Prop.~\ref{runningtimeprop}
is subexponential only if
$\theta^*<1$, which unfortunately seems beyond the current technology
to prove, even under GRH.
We can, however, make a reasonable conjecture of the value of $\theta^*$
by answering the analogous question for a suitable random matrix
model, where the calculation is more tractable:
\begin{conjecture}
\label{etaconj}
We have
$$\eta_\infty(\theta)=\begin{cases}
1-\frac{\theta}2&\text{if }0<\theta<1,\\
\frac12&\text{if }\theta\ge1.
\end{cases}$$
In particular, $\theta^*=\frac23$.
\end{conjecture}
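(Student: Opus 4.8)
The plan is to obtain the stated formula not as a theorem about $L$-functions---which, as noted, is out of reach even under GRH---but as the output of the corresponding, tractable computation in the random matrix model. The family of quadratic twists $\{L(s,\chi_{q\Delta}):q\in\mathcal{F},\,(q,\Delta)=1\}$ has symplectic symmetry type (cf.\ \cite{katz-sarnak,rubinstein}), so in the model we treat the first zero gaps $\gamma_1(q\Delta)$, as $q$ varies, as independent random variables whose rescalings by $\log|q\Delta|$ all follow the universal law of the lowest eigenangle in the scaling limit of the symplectic group. Then $M_\Delta(\theta)$ becomes the maximum of $\asymp\exp\bigl((\log|\Delta|)^\theta\bigr)$ independent samples, with the wrinkle that the competitors have different conductors $|q\Delta|$ and hence different scalings; the conjecture is the assertion that this maximum has typical size $(\log|\Delta|)^{\theta/2-1}$ for $0<\theta<1$ and $(\log|\Delta|)^{-1/2}$ for $\theta\ge1$.

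The single analytic input is the upper tail of the lowest symplectic eigenangle: there is an explicit $\kappa>0$ such that, for a symplectic-symmetry $L$-function of conductor $C$,
\[
\Pr\bigl[\gamma_1>t\bigr]=\exp\!\bigl(-(\kappa+o(1))(t\log C)^2\bigr)
\qquad(t\to0,\ t\log C\to\infty).
\]
This is a gap-probability asymptotic at the symmetry point, of the same Gaussian shape as Dyson's estimate $E(0;s)\asymp s^{-1/4}e^{-\pi^2 s^2/8}$ for the sine kernel (the enhanced symplectic repulsion at the centre affects only the subexponential factor, which is immaterial here since all constants wash out under the $\log\log|\Delta|$ normalization). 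Granting this, the maximum of $n$ independent copies concentrates around the $t$ solving $n\,e^{-\kappa(t\log C)^2}\asymp1$, i.e.\ $t\asymp(\log C)^{-1}\sqrt{(\log n)/\kappa}$, with relative fluctuations $O(1/\log n)$, which are negligible after taking logarithms and dividing by $\log\log|\Delta|$.

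It remains to optimise over the twisting set. Partition the $q$ with $|q|\le\exp\bigl((\log|\Delta|)^\theta\bigr)$ into dyadic ranges $|q|\asymp Q_0$; such a range contributes $\gg Q_0/\log\log|\Delta|$ fundamental discriminants coprime to $\Delta$, each of conductor $\asymp Q_0|\Delta|$, hence a best gap $\asymp\sqrt{(\log Q_0)/\kappa}\,/\log(Q_0|\Delta|)$. Writing $L=\log|\Delta|$ and $u=\log Q_0\in[0,L^\theta]$, we are maximising $u/(u+L)^2$, whose unconstrained maximum lies at $u=L$. Thus for $\theta\ge1$ the optimum is attained at $u\asymp L$ (using more, but higher-conductor, twists is strictly worse), giving $M_\Delta(\theta)\asymp L^{-1/2}$ and $\eta_\infty(\theta)=\tfrac12$; for $0<\theta<1$ the constraint $u\le L^\theta$ binds, the optimum is at $u=L^\theta$, and $M_\Delta(\theta)\asymp L^{\theta/2-1}$, so $\eta_\infty(\theta)=1-\tfrac\theta2$. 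A union bound over the $O(L^\theta)$ dyadic ranges gives the matching upper bound for $M_\Delta$ (one enlarges $t$ by a factor $1+L^{-\theta/2}$, which is negligible in the logarithm), and a second-moment argument within each range gives the lower bound; hence $\eta_\Delta(\theta)$ converges to the stated value and its $\limsup$ equals it. Finally $\theta^*=\inf\{\theta>0:1-\tfrac\theta2\le\theta\}=\tfrac23$, since $1-\theta/2\le\theta\iff\theta\ge\tfrac23$ and $\tfrac12\le\theta$ automatically once $\theta\ge1$.

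The main obstacle is not the elementary optimisation above but the two ingredients it rests on. The first---the precise Gaussian large-deviation tail of the lowest eigenvalue of $\mathrm{USp}(2N)$, uniformly in the regime $1\ll t\log C\ll\log C$---is a genuine but accessible problem in random matrix theory (a Fredholm determinant asymptotic at the symplectic hard edge). The second---that the low-lying zeros of $L(s,\chi_{q\Delta})$ for distinct $q$ behave like independent symplectic spectra---is exactly the step that cannot presently be established for $L$-functions, and is why the statement is offered only as a conjecture; inside the random matrix model it is a hypothesis of the model itself, so there the argument goes through.
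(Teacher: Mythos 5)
Your heuristic takes essentially the same route as the paper's justification (which appears in \S\ref{rmtcalc}, not as a proof since the statement is a conjecture): symplectic symmetry type for the twist family, a Gaussian large-deviation tail for $\theta_1$ of $USp(2N)$, and extremal statistics over $\asymp\exp((\log|\Delta|)^\theta)$ independent samples. The main substantive difference is that the paper proves the random-matrix ingredient rigorously---Lemma~\ref{uspgapbounds} gives $\log\mathbb{P}_{USp(2N)}(\theta_1>s)=\bigl[2+O((Ns)^{-1})\bigr]N^2\log\cos(s/2)$, so your constant $\kappa$ is pinned down to $1/16$, and Prop.~\ref{uspgapprop} packages this into the clean statement that $(2N)^{1-\beta/2}\max_m\theta_1(m)\to 4$ in distribution---whereas you state the tail asymptotic as a black box. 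In return, your dyadic decomposition over $|q|\asymp e^u$ followed by maximizing $\sqrt{u}/(u+L)$ over $u\in[0,L^\theta]$ is a cleaner, unified derivation that makes the $\theta\ge1$ case explicit: the unconstrained optimum $u=L$ yields $\eta_\infty=\tfrac12$ directly, whereas the paper only argues verbally that increasing $\theta$ beyond $1$ gives no further benefit and relies on continuity at $\theta=1$ to identify the constant. One minor caveat: your optimization implicitly treats $\Delta$ as fixed and the conductor of the twist as $\asymp Q_0|\Delta|$, so the samples in distinct dyadic ranges are not identically distributed; the union/second-moment argument you invoke does handle this, but it is worth noting that the paper sidesteps the issue by restricting the clean asymptotic to $\theta<1$, where $\log|q\Delta|\sim\log|\Delta|$ uniformly. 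Both presentations correctly isolate the same unprovable core assumption---that low-lying zeros of $L(s,\chi_{q\Delta})$ for distinct $q$ behave like independent symplectic spectra---which is exactly why the statement is offered as a conjecture rather than a theorem.
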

Thus, by Prop.~\ref{runningtimeprop},
we conjecture that our algorithm is capable
of certifying $N$ squarefree in time
$O\bigl(\exp[(\log{N})^{2/3+o(1)}]\bigr)$.

We give a detailed justification for the conjecture in \S\ref{rmtcalc}
below. First, however,
it turns out that one can arrive at the same conclusion for the running
time without any consideration of the zero sum in \eqref{explicitformula},
by analyzing the lower bound \eqref{lowerbound}
using a simple model of the $\chi_{q\Delta}(p)$ 
as independent random variables assuming the values
$1$ and $-1$ with equal probability (this is not always a good model \cite{rubinstein-sarnak} but suffices for our 
purposes).
We make this more precise in the following proposition,
which is a consequence of \cite[Theorem 1]{montgomery-odlyzko}. 
\begin{proposition}
\label{randomprimemodel}
Let $Y_1,Y_2,\ldots$ be independent random variables such that
$\mathbb{P}(Y_j=1)=\mathbb{P}(Y_j=-1)=\frac12$,
and put $Y := 2\sum_{p_j\le e^X}\frac{Y_j\log p_j}{\sqrt{p_j}}\left(1-\frac{\log
p_j}{X}\right)$, where $p_j$ denotes the $j$th prime number.
Then, for each $n$ satisfying $3\le n<e^X$, we have
$$
\mathbb{P}(Y \ge v_n) \ge 2^{-22} \exp\left(-\frac{30v_n^2}{c_n}\right), 
\qquad\mathbb{P}(Y \ge u_n) \le \exp\left(-\frac{u_n^2}{32c_n}\right),
$$
where $v_n:=\sum_{p_j\le n}\frac{\log p_j}{\sqrt{p_j}}\left(1-\frac{\log
p_j}{X}\right)$, $u_n:=4v_n$, 
and $c_n:=\sum_{n<p_j\le e^X} \frac{\log^2 p_j}{p_j}\left(1-\frac{\log
p_j}{X}\right)^2$. 
\end{proposition}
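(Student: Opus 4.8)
The plan is to split the sum defining $Y$ according to the size of the prime and to handle the two directions separately. Write $Y=Y'+Y''$, where $Y'=2\sum_{p_j\le n}\frac{Y_j\log p_j}{\sqrt{p_j}}\bigl(1-\tfrac{\log p_j}{X}\bigr)$ and $Y''=2\sum_{n<p_j\le e^X}\frac{Y_j\log p_j}{\sqrt{p_j}}\bigl(1-\tfrac{\log p_j}{X}\bigr)$. These two sums are independent; since each coefficient $\frac{\log p_j}{\sqrt{p_j}}\bigl(1-\tfrac{\log p_j}{X}\bigr)$ is positive for $p_j<e^X$, the first sum is deterministically bounded, $|Y'|\le 2v_n$, and attains the extreme values $\pm 2v_n$, while the second is a sum of independent, symmetric, mean-zero random variables, the $j$th taking the two values $\pm\frac{2\log p_j}{\sqrt{p_j}}\bigl(1-\tfrac{\log p_j}{X}\bigr)$, so that $\mathbb{E}[Y'']=0$ and $\mathrm{Var}(Y'')=4c_n$. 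Both inequalities will come from applying \cite[Theorem~1]{montgomery-odlyzko} to the weighted Rademacher sum $Y$ with the coefficient split placed at the prime $n$; for the upper bound one can argue directly.

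For the upper bound I would note that, since $Y'\le 2v_n=u_n/2$ pointwise, the inequality $Y=Y'+Y''\ge u_n$ forces $Y''\ge u_n/2$. The summands of $Y''$ lie in intervals of lengths $\frac{4\log p_j}{\sqrt{p_j}}\bigl(1-\tfrac{\log p_j}{X}\bigr)$, whose squares sum to $16c_n$, so Hoeffding's inequality gives
\[
\mathbb{P}(Y\ge u_n)\ \le\ \mathbb{P}\bigl(Y''\ge u_n/2\bigr)\ \le\ \exp\!\Bigl(-\frac{2(u_n/2)^2}{16c_n}\Bigr)\ =\ \exp\!\Bigl(-\frac{u_n^2}{32c_n}\Bigr),
\]
which is the claimed bound; the factor $4$ in $u_n=4v_n$ is precisely what pulls the threshold out of the range where $Y'$ alone could be responsible. (This is also the upper half of \cite[Theorem~1]{montgomery-odlyzko}.)

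For the lower bound I would appeal to the substantive half of \cite[Theorem~1]{montgomery-odlyzko}, its large-deviation \emph{lower} bound for the right tail of a weighted Rademacher sum. Applied to $Y$ with the threshold at $p=n$, the partial sum of the coefficients below the threshold is $2v_n$ and the sum of squares of the coefficients above it is $4c_n$; as the target deviation $v_n$ does not exceed $2v_n$, that theorem produces a bound of the shape $2^{-O(1)}\exp\!\bigl(-O(1)\cdot v_n^2/c_n\bigr)$, and inserting the explicit absolute constants of \cite{montgomery-odlyzko} and simplifying yields $\mathbb{P}(Y\ge v_n)\ge 2^{-22}\exp(-30v_n^2/c_n)$. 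To make this work uniformly for $3\le n<e^X$ I would verify the hypotheses of \cite[Theorem~1]{montgomery-odlyzko}: that $c_n>0$, that the coefficients are uniformly bounded (here $\frac{2\log p_j}{\sqrt{p_j}}\bigl(1-\tfrac{\log p_j}{X}\bigr)<2$), and that the compatibility condition relating the deviation, the below-threshold partial sum and the above-threshold variance holds. Two mildly delicate points arise: the ordering of the coefficients disagrees with the ordering of the primes only among finitely many small primes, a bounded discrepancy absorbed into the constants; and the compatibility condition must be checked using elementary Chebyshev–Mertens estimates for $\sum_{p\le y}\frac{\log p}{\sqrt p}$ and $\sum_{p\le y}\frac{\log^2 p}{p}$.

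The main obstacle is exactly this lower bound, and the fact that no naive argument suffices: forcing all of $Y'$ to be positive, or forcing only the largest few coefficients to align, gives a bound far weaker than $2^{-22}\exp(-30v_n^2/c_n)$ whenever $n$ is small compared with $e^X$ (equivalently, whenever $X$ is large). What is genuinely needed is the optimally tilted (exponential-change-of-measure) large-deviation lower bound of \cite[Theorem~1]{montgomery-odlyzko}, together with the verification — routine in isolation but fiddly in combination — that $v_n$, $c_n$ and $u_n$ are, under the split at $p=n$, the quantities to which that theorem applies, with the absolute constants coming out no worse than $2^{-22}$, $30$ and $\tfrac1{32}$.
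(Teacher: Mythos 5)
Your proposal is correct and is essentially the paper's argument: the paper offers no proof of this proposition beyond the single-line citation to \cite[Theorem~1]{montgomery-odlyzko}, and your derivation — splitting $Y=Y'+Y''$ at the prime $n$, obtaining the upper bound from Hoeffding applied to $Y''$ (equivalently part~(i) of that theorem, whence the factor $4$ in $u_n=4v_n$), and invoking the tilted-measure lower bound of part~(ii) with the coefficient cutoff at $p=n$ — is precisely what that citation compresses. The delicate points you identify are the right ones; in particular the coefficient ordering issue (the weights $\frac{2\log p}{\sqrt p}(1-\tfrac{\log p}{X})$ are increasing for $p<e^2$ and so, e.g., $p=2,3,5$ sit well out of rank, with $p=2$'s coefficient around the twenty-somethingth largest) means the set $\{r_j>\alpha\}$ cannot coincide with $\{p_j\le n\}$ for any $\alpha$, and one must push the threshold slightly and verify via Mertens-type estimates that this bounded rearrangement among $O(1)$ coefficients only shifts the absolute constants, which is absorbed into the stated $2^{-22}$ and $30$.
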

%In particular, if $n$ is small enough compared to $e^X$ but not
%too small, so that 
In particular, as $n, X\to\infty$ with $n=e^{o(X)}$, so that
$v_n\sim 2\sqrt{n}$ and $c_n\sim \frac{1}{12}X^2$, we get
$\mathbb{P}(Y\ge 2\sqrt{n})
\ge\exp(-(1440+o(1))n/X^2)$. Therefore, after 
$\lfloor\exp{X}\rfloor$ independent samples of $Y$,
we expect to occasion $Y\gtrsim \frac{1}{6\sqrt{10}}X^{3/2}$ at least once.
In the opposite direction, we have $\mathbb{P}(Y\ge 8\sqrt{n})
\le \exp(-(24+o(1))n/X^2)$, and so 
after $\lfloor\exp{X}\rfloor$ independent samples, we expect 
at most one instance of $Y\gtrsim \frac{4}{\sqrt{6}}X^{3/2}$.
%(Our use of $\lesssim$ and $\gtrsim$ indicates approximate rather than asymptotic value.) 
Together, these estimates are consistent with $\theta^*=2/3$. 
Of course, Prop.~\ref{randomprimemodel}
simplifies the situation by ignoring the higher prime powers,
but that is not important since they contribute only $O(X)$, and so 
do not impact the $X^{3/2}$ term. It is worth noting, however, that 
the contribution of the higher prime
powers in numerical computations is still noticeable because $\chi_q(p^2)=1$
whenever $(q,p)=1$, and so the bulk of their contribution is guaranteed
to help our lower bound, regardless of the number of samples.

\subsection{A conjecture for $\theta^*$ via random matrix theory}
\label{rmtcalc}
The random matrix philosophy suggests (e.g.\ by comparing the 1-level densities)
that the relevant symmetry for a family of primitive quadratic 
twists is symplectic. The symplectic group $USp(2N)$ is a compact group consisting 
of $2N\times 2N$ unitary matrices $A$ satisfying $A^t J A = J$, where
$$
J := \left( \begin{array}{cc}
0 & I_N \\
-I_N & 0 \end{array} \right).
$$
The eigenvalues of $A$ lie on the unit circle,
come in conjugate pairs, and can be written uniquely as 
$$
e^{\pm i\theta_1(A)},\ldots,e^{\pm i\theta_N(A)},\qquad
0\le \theta_1(A) \le \cdots \le \theta_N(A) \le \pi. 
$$
Making the identification 
$2N = \log |\Delta|$,\footnote{This identification is obtained by equating 
the mean spacing of eigenphases of $A\in USp(2N)$, which is $\pi/N$, 
and the mean spacing of zeros of
$L(s,\chi_{q\Delta})$ at a fixed height, which is $\sim 2\pi/\log |q\Delta|\sim
2\pi/\log|\Delta|$ as $|\Delta|\to\infty$, $\Delta\in\mathcal{F}$.}
we expect that statistics of the lowest eigenphase $\theta_1(A)$ 
as $A$ varies in $USp(2N)$
coincide to leading order, and modulo arithmetic effects, 
with statistics of the lowest zero $\gamma_1(q\Delta)$ 
as $q$ varies in $\mathcal{F}$ but 
still sufficiently small compared to $|\Delta|$. 
Thus, by computing statistics of $\theta_1(A)$, we arrive at
conjectures for $\gamma_1(q\Delta)$.
In particular, since the complexity of our algorithm depends on 
the frequency of large values 
of $\gamma_1(q\Delta)$, we are led to consider the tail distribution of 
$\theta_1(A)$. 

To this end, and 
to facilitate comparison with other symmetry groups later on, 
let $U(N)$ denote the (compact) group of $N\times N$ 
unitary matrices, and $SO(2N) \subset U(2N)$ the group of 
orthogonal matrices of determinant $1$. The eigenphases of $A\in U(N)$
can be written uniquely as $0\le \theta_1(A) \le \cdots \le \theta_N(A) < 2\pi$,
while those of $A\in SO(2N)$, which come in pairs $\pm\theta_j(A)$, 
can be written uniquely as $0\le  \theta_1(A)\le\ldots \le  \theta_N(A) \le \pi$.
Let $\mathbb{P}_{G(N)}$ denote the unique Haar measure on $G(N)
\in\{U(N),SO(2N),USp(2N)\}$,  
normalized to be a probability measure. The random matrix philosophy
suggests, for example, that the relevant symmetry group for averages over 
a family of twists by $n^{it}$ is unitary, while for averages over a family of 
elliptic curves it is orthogonal (even or odd, depending on 
the sign of the functional equation in the family).

Let $\mathbb{P}^{\times M}_{G(N)} = \mathbb{P}_{G(N)}\times \cdots \times 
\mathbb{P}_{G(N)}$, repeated $M$ times, be the product measure on $G(N)^M$.
For each Borel-measurable set $J \subset [0,\sigma \pi]$, where $\sigma=2$ 
if $G(N)=U(N)$ and $\sigma=1$ otherwise,  
define 
$\mathcal{S}(J):=\{(A_1,\ldots, A_M) \in
G(N)^M: \max_{1\le m \le M} \theta_1(A_m) \in J\}$.
For short-hand,
we write $\mathbb{P}_{G(N)}(\max_{1\le m \le M} \theta_1(m) \in J)$
in place of $\mathbb{P}^{\times M}_{G(N)}(\mathcal{S}(J))$, 
$\mathbb{P}_{G(N)}(\max_{1\le m \le M} \theta_1(m)>s)$ 
in place of $\mathbb{P}^{\times M}_{G(N)}(\mathcal{S}((s,\infty)))$,
and so on. 
The distribution function $\mathbb{P}_{G(N)}\bigl(\theta_1>s\bigr)$ 
is known as the gap probability. 
The proofs of Propositions~\ref{uspgapprop}--\ref{ungapprop}
below are given in the appendix.
\begin{proposition}
\label{uspgapprop}
Fix $\beta \in (0,2)$, and define
$$
M_{\beta}(N) := \left\lfloor \exp((2N)^{\beta})\right\rfloor,
\quad s_{\varepsilon,\beta}^{\pm}(N):=(4 \pm \varepsilon)(2N)^{\beta/2 - 1}.
$$
Then, for each fixed $\varepsilon >0$, as $N\to \infty$ we have
$$
\mathbb{P}_{USp(2N)} \left(s^-_{\varepsilon,\beta}(N) < 
\max_{1\le m \le M_{\beta}(N)} \theta_1(m) \le
s^+_{\varepsilon,\beta}(N) \right) \to 1.
$$
In other words, $(2N)^{1-\beta/2}\max_{1\le m \le M_{\beta}(N)} \theta_1(m)$
converges in distribution to $4$.
\end{proposition}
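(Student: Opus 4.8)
The plan is to reduce Proposition~\ref{uspgapprop} to sharp asymptotics for the single-matrix gap probability $p_N(s):=\mathbb{P}_{USp(2N)}(\theta_1>s)$ in the regime $s\to0$, $Ns\to\infty$, and then read off the behaviour of the maximum over $M_\beta(N)$ independent samples. Since the $A_m$ are i.i.d.\ and $\max_{m\le M}\theta_1(A_m)\le s$ exactly when every $\theta_1(A_m)\le s$, we have $\mathbb{P}_{USp(2N)}\bigl(\max_{m\le M}\theta_1(m)\le s\bigr)=(1-p_N(s))^M$, so (using $1-x\le e^{-x}$ and $(1-x)^M\ge1-Mx$) it suffices to show
$$M_\beta(N)\,p_N\bigl(s^+_{\varepsilon,\beta}(N)\bigr)\to0,\qquad M_\beta(N)\,p_N\bigl(s^-_{\varepsilon,\beta}(N)\bigr)\to\infty.$$
With $M_\beta(N)=\lfloor e^{(2N)^\beta}\rfloor$ and $\tfrac14 N^2\bigl(s^\pm_{\varepsilon,\beta}(N)\bigr)^2=\tfrac{(4\pm\varepsilon)^2}{16}(2N)^\beta$, and since $\tfrac{(4+\varepsilon)^2}{16}>1>\tfrac{(4-\varepsilon)^2}{16}$ for $0<\varepsilon<8$, both limits follow from the single estimate
$$\log p_N(s)=-\Bigl(\tfrac14+o(1)\Bigr)N^2s^2\qquad(N\to\infty,\ s\to0,\ Ns\to\infty);$$
in fact one only needs the upper bound $p_N(s^+)\le\exp(-(\tfrac14-o(1))N^2s^2)$ and the lower bound $p_N(s^-)\ge\exp(-(\tfrac14+o(1))N^2s^2)$.

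\textbf{An exact identity for the gap probability.} To prove this I would change variables from the eigenphases to $u_j=\cos\theta_j$. The Weyl integration formula for $USp(2N)$ expresses $p_N(s)$ as the ratio of $\int_{[-1,\cos s]^N}$ to $\int_{[-1,1]^N}$ of the Jacobi $(\tfrac12,\tfrac12)$ density $\prod_{j<k}(u_j-u_k)^2\prod_j(1-u_j^2)^{1/2}$, i.e.\ of Selberg-type integrals. Rescaling the numerator's domain back to $[-1,1]$ by the affine map of $[-1,1]$ onto $[-1,\cos s]$ yields the exact identity
$$p_N(s)=\Bigl(\cos^2\tfrac s2\Bigr)^{N^2+N}R_N(s),\qquad R_N(s)=\frac{\displaystyle\int_{[-1,1]^N}\prod_{j<k}(v_j-v_k)^2\prod_j(1+v_j)^{1/2}(c-v_j)^{1/2}\,dv}{\displaystyle\int_{[-1,1]^N}\prod_{j<k}(v_j-v_k)^2\prod_j(1-v_j^2)^{1/2}\,dv},$$
where $c=\tfrac{3-\cos s}{1+\cos s}=1+2\tan^2\tfrac s2=1+\tfrac{s^2}2+O(s^4)$. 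The prefactor is the main term, since $(\cos^2\tfrac s2)^{N^2+N}=\exp\bigl((N^2+N)\log\cos^2\tfrac s2\bigr)=\exp(-(\tfrac14+o(1))N^2s^2)$; it remains to show that the residual ratio satisfies $1\le R_N(s)\le\exp(o(N^2s^2))$.

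\textbf{Controlling the residual ratio.} The lower bound $R_N(s)\ge1$ is immediate from $c\ge1\Rightarrow(c-v)^{1/2}\ge(1-v)^{1/2}$ on $[-1,1]$, and already gives the required $p_N(s^-)\ge(\cos^2\tfrac{s^-}2)^{N^2+N}$. For the upper bound I would differentiate in $c$ to write
$$\log R_N(s)=\tfrac12\int_1^{c}\!\!\int_{-1}^1\frac{\bar\rho_{c',N}(v)}{c'-v}\,dv\,dc',$$
where $\bar\rho_{c',N}$ is the mean eigenvalue density (total mass $N$) of the ensemble with weight $(1+v)^{1/2}(c'-v)^{1/2}$. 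Away from the hard edge this deformed ensemble has mean density comparable to the arcsine law $N/(\pi\sqrt{1-v^2})$, so using $\int_{-1}^1\frac{dv}{\pi\sqrt{1-v^2}(c'-v)}=1/\sqrt{c'^2-1}$ and the fact that at most $O(1)$ eigenvalues lie within $N^{-2}$ of $v=1$, one gets $\int\frac{\bar\rho_{c',N}(v)}{c'-v}\,dv\ll N/\sqrt{c'-1}$ whenever $c'-1\gg N^{-2}$, whose $c'$-integral over $(1+N^{-2},c)$ is $O(N\sqrt{c-1})=O(Ns)$; and for $1<c'-1\le N^{-2}$ the (fast) vanishing of the Jacobi hard-edge density at scale $N^{-2}$ bounds $\int\frac{\bar\rho_{c',N}(v)}{c'-v}\,dv$ by $O(N^2)$ (times at most a logarithmic factor), whose integral over $c'\in(1,1+N^{-2})$ is $O(1)$. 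Hence $\log R_N(s)=O(Ns)=o(N^2s^2)$, which completes the gap estimate and the proposition. (Alternatively one can bound $R_N(s)=\mathbb{E}_{J(\frac12,\frac12)}\bigl[\prod_j(1+\tfrac{c-1}{1-v_j})^{1/2}\bigr]$ directly, splitting the $v_j$ into ``bulk'' and ``edge'' parts at scale $c-1$; or invoke known asymptotics for Toeplitz$+$Hankel determinants with two merging Fisher--Hartwig singularities.)

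\textbf{Main obstacle.} The delicate point is precisely the control of $R_N(s)$ near the hard edge $v=1$: the deformation has size $c-1=\Theta(s^2)$, which is the \emph{critical} scale in the sense that $\mathbb{E}\bigl[\sum_j\tfrac{c-1}{1-v_j}\bigr]\asymp N^2(c-1)\asymp N^2s^2$ is of the same order as the leading term, so any crude (Jensen-type) bound on $R_N(s)$ loses a constant and is fatal. One has to show the deformation contributes only at order $Ns$, and this needs genuine information about the Jacobi ensemble at the hard edge --- a uniform upper bound $\bar\rho_{c',N}(v)\ll N/\sqrt{1-v^2}$ away from the edge together with $O(1)$ total mass within $N^{-2}$ of it (from the Jacobi orthogonal polynomials via Christoffel--Darboux, or from the Bessel-kernel scaling limit). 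All the remaining ingredients --- the affine change of variables, the Taylor expansion of $\cos^2\tfrac s2$, and the passage from $p_N$ to the distribution of the maximum --- are routine.
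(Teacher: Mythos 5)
Your reduction to the one-matrix gap probability, the change of variables to a Jacobi $(\tfrac12,\tfrac12)$ ensemble with the $(\cos^2\tfrac{s}{2})^{N^2+N}\cdot R_N(s)$ factorization (the paper works with $w_j=\cos^2\tfrac{\phi_j}{2}$ rather than $u_j=\cos\theta_j$, but after the affine rescaling the two identities are equivalent), and the monotonicity argument giving $R_N(s)\ge 1$ (and thus the lower bound on $p_N(s^-)$) are all correct and match the paper's Lemma A.4 in substance. The target asymptotic $\log p_N(s)=-(\tfrac14+o(1))N^2s^2$, the numerology $\tfrac14 N^2(s^\pm)^2=\tfrac{(4\pm\varepsilon)^2}{16}(2N)^\beta$, and the passage to the maximum of $M_\beta(N)$ i.i.d.\ samples are also correct and coincide with the paper's derivation.

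Where you diverge is the upper bound $R_N(s)\le\exp(o(N^2s^2))$, and here you have a genuine gap which you yourself flag. Your potential-theoretic argument via $\log R_N(s)=\tfrac12\int_1^c\int_{-1}^1\tfrac{\bar\rho_{c',N}(v)}{c'-v}\,dv\,dc'$ is a reasonable strategy, but the inputs you invoke --- a uniform bulk bound $\bar\rho_{c',N}(v)\ll N/\sqrt{1-v^2}$ and $O(1)$ total mass within $N^{-2}$ of $v=1$ --- concern the \emph{deformed} ensemble with weight $(1+v)^{1/2}(c'-v)^{1/2}$, not the fixed Jacobi $(\tfrac12,\tfrac12)$ ensemble. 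Since the deformed weight does not vanish at $v=1$ when $c'>1$, the point $v=1$ is no longer a genuine hard edge for this ensemble, and the Bessel-kernel or Christoffel--Darboux facts you cite do not apply off the shelf; one would need a uniform (in $c'$, down to the critical scale $c'-1\asymp N^{-2}$) edge analysis of a $c'$-dependent family of Jacobi-type weights. This is precisely the delicate point you identify, and the proposal stops at naming the needed ingredient rather than supplying it. The paper avoids all of this with a purely algebraic device: after the first-order inequality $\sqrt{1-\lambda x}\le\tfrac{1+\lambda}{2}\sqrt{1-x}\bigl(1+\tfrac{1-\lambda}{1+\lambda}\tfrac1{1-x}\bigr)$, the residual ratio is expanded into a finite sum of Selberg-type integrals evaluated exactly via Aomoto's formula, and the resulting polynomial in $\tfrac{1-\lambda}{1+\lambda}$ is a Morgan--Voyce polynomial with a closed form; this yields the explicit, non-asymptotic bound $R_N(s)\le\exp(Ns/\sqrt2)$ with no hard-edge analysis at all, which is both simpler and sharper ($O(Ns)$ rather than your target $o(N^2s^2)$). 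If you want to keep your route you would need to actually establish the uniform-in-$c'$ density estimates (e.g.\ via a Riemann--Hilbert or Toeplitz$+$Hankel analysis with merging Fisher--Hartwig singularities, as you mention in the alternative), which is considerably more work than the paper's combinatorial argument.
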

Therefore, if we choose $A_1,\ldots,A_{M(N)} \in USp(2N)$, 
independently and uniformly with
respect $\mathbb{P}_{USp(2N)}$, then in the limit as $N\to\infty$, we have
$\max_{1\le m \le M(N)}
\theta_1(A_m) > s^-_{\varepsilon,\beta}(N)$ with probability approaching $1$. 
For instance, if $\beta=1$, then we expect to find at least one lowest eigenphase of size
$\gtrsim (4-\varepsilon)/\sqrt{2N}$.
Since the eigenvalues of symplectic matrices come in conjugate pairs, 
this corresponds to an eigenphase spacing 
$\ge 2(4-\varepsilon)/\sqrt{2N} \approx \sqrt{32/N}$, 
which is $\frac4{\pi}\sqrt{2N}$ times the average spacing. 

In contrast, the eigenvalues of unitary matrices 
do not necessarily come in conjugate pairs,  
so the point $1$ on the unit circle is no longer distinguished,
and actually $\mathbb{P}_{U(N)}$ is rotationally invariant. 
Thus, it is more natural to consider the nearest-neighbor
distribution function, 
$\mathbb{P}_{U(N)}(\theta_2-\theta_1 > u) := \mathbb{P}_{U(N)}(\{A\in U(N):
\theta_2(A)-\theta_1(A)>u\})$, which is related to the gap probability by 
differentiation; see \eqref{ungapprobderivative} in 
the appendix. In fact, $\log \mathbb{P}_{U(N)}(\theta_2-\theta_1 > u) \sim 
\log \mathbb{P}_{U(N)}(\theta_1 > u)$ as $N\to\infty$ over a wide range of $u$.
To facilitate comparison with the symplectic case, we consider
the half-spacings $\frac12[\theta_2-\theta_1]$ in the following.
\begin{proposition}
\label{ungapprop}
Fix $\beta \in (0,2)$, and define
$$
M_{\beta}(N) := \left\lfloor \exp(N^{\beta})\right\rfloor,
\quad u^{\pm}_{\varepsilon,\beta}(N):=\sqrt{8\pm\varepsilon}N^{\beta/2-1}.
$$ 
Then, for each fixed $\varepsilon\in(0,8]$, as $N\to \infty$ we have
$$
\mathbb{P}_{U(N)} \left(u^-_{\varepsilon,\beta}(N) < 
\max_{1\le m \le M_{\beta}(N)} 
\tfrac12[\theta_2(m)-\theta_1(m)] \le  u^+_{\varepsilon,\beta}(N) \right)
\to 1.
$$
In other words, $N^{1-\beta/2}\max_{1\le m \le M_{\beta}(N)} 
\frac12[\theta_2(m)-\theta_1(m)]$ converges in
distribution to $\sqrt{8}$. The same is true if, in addition, 
one maximizes over the spacings of each matrix, 
replacing $\frac12[\theta_2(m)-\theta_1(m)]$
by $\max_{1\le j \le N}\frac12[\theta_{j+1}(m)-\theta_j(m)]$,
where $\theta_{N+1} := 2\pi+\theta_1$.
\end{proposition}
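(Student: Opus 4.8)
The plan is to reduce the statement to sharp uniform asymptotics for the CUE gap probability and then run a standard extreme‑value argument, in parallel with the symplectic case of Proposition~\ref{uspgapprop}. Write $F_N(u):=\mathbb{P}_{U(N)}(\theta_1>u)$ for the probability that an arc of angular length $u$ is free of eigenphases. By stationarity (rotational invariance of Haar measure) and the Mehta‑type identity \eqref{ungapprobderivative}, the nearest‑neighbour survival function equals $-\frac{2\pi}{N}F_N'(u)$, so that
$$\log\mathbb{P}_{U(N)}\bigl(\theta_2-\theta_1>u\bigr)=\log F_N(u)+O(\log N)$$
once $F_N$ is controlled together with its derivative. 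Hence the whole proposition rests on the estimate
$$\log F_N(u)=-\frac{N^2u^2}{32}\,\bigl(1+o(1)\bigr)\qquad(N\to\infty),$$
required uniformly for $u$ in any window $c_1N^{\beta/2-1}\le u\le c_2N^{\beta/2-1}$ with $0<c_1<c_2$ fixed; equivalently, writing $L:=Nu/(2\pi)$, the estimate $\log F_N(u)=-\frac{\pi^2}{8}L^2(1+o(1))$ in the regime $1\ll L\ll N^{1-\delta}$, which is Dyson's large‑gap asymptotic for the sine process transported to finite $N$.

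To establish the displayed estimate I would start from $F_N(u)$ written as the Fredholm determinant $\det(I-K_N)$ on $L^2$ of an arc of length $u$, where $K_N(x,y)=\frac{1}{2\pi}\frac{\sin(N(x-y)/2)}{\sin((x-y)/2)}$ is the CUE reproducing kernel, and then rescale $x\mapsto 2\pi x/N$ so that the determinant is taken on $(0,L)$ against a kernel converging to $\frac{\sin\pi(x-y)}{\pi(x-y)}$. The asymptotics, uniform in $L$ over the range $1\ll L\ll N^{1-\delta}$, follow from the Riemann--Hilbert/steepest‑descent analysis of the associated Toeplitz determinant with a (degenerating) Fisher--Hartwig jump singularity, equivalently of the sine‑kernel determinant on a simultaneously growing interval; this is available in the literature, and because the CUE kernel is completely explicit it can also be pushed through directly. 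The same analysis supplies the derivative control needed to pass from $F_N$ to $\mathbb{P}_{U(N)}(\theta_2-\theta_1>u)$; alternatively one repeats the argument for the reduced‑Palm determinant directly, its leading asymptotics being identical.

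Granting this, the rest is a soft extreme‑value computation, exactly as for Proposition~\ref{uspgapprop}. Set $X_m:=\tfrac12[\theta_2(A_m)-\theta_1(A_m)]$; for $A_1,\dots,A_{M_\beta(N)}$ independent and Haar‑distributed these are i.i.d.\ with $-\log\mathbb{P}(X_m>t)=\frac{N^2t^2}{8}(1+o(1))$ throughout the relevant range. For the upper tail, $\mathbb{P}\bigl(X_m>u^+_{\varepsilon,\beta}(N)\bigr)\le\exp\bigl(-\tfrac{8+\varepsilon}{8}(1+o(1))N^\beta\bigr)$, and since $\log M_\beta(N)=N^\beta+o(1)$ the union bound gives $\mathbb{P}\bigl(\max_m X_m>u^+_{\varepsilon,\beta}(N)\bigr)\to0$. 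For the lower tail (when $\varepsilon<8$), $\mathbb{P}\bigl(X_m>u^-_{\varepsilon,\beta}(N)\bigr)\ge\exp\bigl(-\tfrac{8-\varepsilon}{8}(1+o(1))N^\beta\bigr)$, so $M_\beta(N)\,\mathbb{P}(X_1>u^-_{\varepsilon,\beta}(N))\to\infty$ and hence, by independence, $\mathbb{P}\bigl(\max_m X_m\le u^-_{\varepsilon,\beta}(N)\bigr)\le\exp\bigl(-M_\beta(N)\mathbb{P}(X_1>u^-_{\varepsilon,\beta}(N))\bigr)\to0$; the case $\varepsilon=8$ is trivial since $u^-_{8,\beta}(N)=0$. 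This proves the stated two‑sided bound, and letting $\varepsilon\downarrow0$ gives convergence in distribution to $\sqrt8$. For the variant that also maximizes over all $N$ cyclic spacings of each matrix, the lower bound is immediate from $\max_{1\le j\le N}\tfrac12[\theta_{j+1}(m)-\theta_j(m)]\ge X_m$; for the upper bound, the expected number of inter‑eigenphase arcs of length exceeding $\ell$ in a single $A\in U(N)$ equals $N\,\mathbb{P}_{U(N)}(\theta_2-\theta_1>\ell)$ (by stationarity and the reduced‑Palm identity), so $\mathbb{P}\bigl(\exists m,j:\tfrac12[\theta_{j+1}(m)-\theta_j(m)]>t\bigr)\le N M_\beta(N)\,\mathbb{P}_{U(N)}(\theta_2-\theta_1>2t)$, and the extra factor $N$ is absorbed into $\log(N M_\beta(N))=N^\beta+O(\log N)$.

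The main obstacle is the gap‑probability estimate of the first two paragraphs. The classical results — Widom's Toeplitz‑determinant asymptotics and Dyson's sine‑kernel formula — are stated either for a fixed arc size with $N\to\infty$ or for the limiting process with interval length tending to infinity, whereas here the arc length $u=u(N)\to0$ while the rescaled length $L=Nu/(2\pi)\to\infty$, and these two limits must be taken together with an error term that is $o(N^\beta)$ — far finer than the $o(N^2)$ a large‑deviations argument for the empirical spectral measure would yield. Everything else (the Palm identity \eqref{ungapprobderivative}, the extreme‑value deduction, and the reduction of the all‑spacings version to the single‑spacing one) is routine.
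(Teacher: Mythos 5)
Your proposal follows essentially the same route as the paper: the Palm identity \eqref{ungapprobderivative}, the sharp uniform asymptotic $\log\mathbb{P}_{U(N)}(\theta_1>2s)\sim N^2\log\cos(s/2)$ in the regime $s\to 0$, $Ns\to\infty$, and then the extreme-value/union-bound computation (including the factor-$N$ union bound for the all-spacings variant). The ``main obstacle'' you flag is exactly what the paper's Lemma~\ref{diLemma} supplies, quoting formulas (8) and (12) of Deift--Its \cite{di} (together with Krasovsky \cite{krasovsky}) for the Toeplitz-determinant asymptotics and its $s$-derivative uniformly on $2s_0/N\le s\le\pi-\delta$.
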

In light of Prop.~\ref{ungapprop}, we see that sampling from $U(2N)$ 
does not do as well as sampling from $USp(2N)$.
For if we choose $\lfloor\exp((2N)^{\beta})\rfloor$ matrices 
from $U(2N)$, independently and uniformly with respect to $\mathbb{P}_{U(2N)}$, 
then we expect that half the max spacing is $\approx
\sqrt{8}(2N)^{\beta/2-1}$, which is worse
than the symplectic case by a factor of $\sqrt{2}$. 
Note that we could have compared $USp(2N)$ and $U(N)$ instead,
but to do so meaningfully the eigenphases in the two ensembles
should be re-normalized to have the same mean spacing,
so that $U(N)$ still does worse by a factor of $\sqrt{2}$.

This suggests that our algorithm should do better if
it searches through quadratic twists rather than twists by $n^{it}$, 
i.e.\ $\gamma_1(q\Delta)$ as opposed to
$\frac12[\gamma_{j+1}(\Delta)-\gamma_j(\Delta)]$, for $q$ and $j$ 
in a suitable range.\footnote{To clarify the analogy with $U(N)$ a little more, 
we expect
$\max_{t\le \gamma_j(\Delta) < t+2\pi} 
\frac12[\gamma_{j+1}(\Delta) - \gamma_j(\Delta)]$, for $t=|\Delta|^{o(1)}$,
to be modelled by $\max_{1\le j\le N} \frac12[\theta_{j+1}(m)-\theta_j(m)]$,
where $N\approx \log |\Delta|$.}
This agrees with our observations in practice.
An additional reason for it might be that the assumption of
independent samples is less applicable to the gaps
$\gamma_{j+1}(\Delta)-\gamma_j(\Delta)$,
since they come from a single $L$-function, and are thus
constrained by its analytic properties, in contrast to the
$\gamma_1(q\Delta)$, which come from different $L$-functions.
For example, we intuitively expect $\gamma_{j+1}(\Delta)-\gamma_j(\Delta)$ 
to have negative correlations over
short ranges (and also some long-range correlations due to the primes; 
see \cite{odlyzko} for a numerical discussion of this 
in the case of zeta). While such negative correlations do not affect 
the $2/3$ in the analogue of Conj.~\ref{etaconj}
for the $t$-aspect, they likely make the implied asymptotic constants worse.

In order to make a conjecture for $\theta^*$ based on our 
$USp(2N)$ calculation, 
we identify $2N$ with $\log|q\Delta|$, as usual, and 
the lowest eigenphase with $\gamma_1(q\Delta)$. 
If $\theta<1$ then twisting by $\chi_q$ does not affect the density of
zeros appreciably, so we may
interpret Prop.~\ref{uspgapprop} for fixed $2N\approx\log|\Delta|$
as sampling 
$\gamma_1(q\Delta)$ for $q$ from $\{q\in\mathcal{F}:(q,\Delta)=1, |q|\le
\exp((\log|\Delta|)^{\theta})\}$.
The conclusion of the proposition thus suggests that
$M_{\Delta}(\theta) \asymp (\log|\Delta|)^{\theta/2-1}$;
in particular, we expect $\eta_{\infty}(\theta) = 1-\theta/2$, 
and so $\theta^* = 2/3$. 
On the other hand, if $\theta>1$ then $q$ becomes the
dominating factor in the zero density; thus, we expect the maximum
of $\gamma_1(q\Delta)$ to be attained for a relatively small choice of $q$,
meaning we do not derive any benefit from increasing $\theta$ further,
and $\eta_\infty(\theta)$ is constant.
Note that similar conclusions are reached if 
we sample twists by $n^{it}$ instead.

Finally, we remark that
Conj.~\ref{etaconj} is of independent interest and may warrant further study.
One can try to confirm it directly (i.e.\ by computing the 
first zero of many twists),  but this requires taking $|\Delta|$ 
fairly large before one can hope to discern a clear
pattern. Basic experiments suggest taking $|\Delta| \gtrsim 10^{15}$, say,
which is prohibitively time-consuming using the standard approximate 
functional equation, as one would need to compute $\gamma_1(q\Delta)$ 
for millions of $q$. 
It would perhaps be better to formulate a precise conjecture for
$M_{\Delta}(\theta)$ itself, 
including lower order terms, and check the numerics for that. One could
also try to confirm the conjecture for other families.

\subsection{Numerical results}
\label{numerics}
We applied our method to several {\tt RSA}-numbers, but our main test case was
{\tt RSA-210}, which is the following 210-digit number:
\begin{align*}
\textrm{{\tt RSA-210}}=\;
&2452466449002782119765176635730880184670267876783327\\
&5974341445171506160083003858721695220839933207154910\\
&3626827191679864079776723243005600592035631246561218\\
&465817904100131859299619933817012149335034875870551067.
\end{align*}
We searched for candidate twists (i.e.\ twists that are expected to make the prime 
sum large) essentially by brute force, with some modest refinements
described in \S\ref{smallprimes}.
We first used a simple weighting function, 
such as a triangle wave, to
evaluate a short prime sum (typically with $p\le 10^4$) for all twists
within a given range, then
incrementally increased the length of the sum as we filtered the results.
The candidates found this way 
were then fed into the lower bound \eqref{lowerbound}, this time using 
a much longer prime sum and the test function produced by 
the quadratic form method outlined in
\S\ref{varytestfunction}.\footnote{Note that the
integral in \eqref{explicitformula} can be computed to high precision 
using standard numerical integration methods.
Moreover, in our final, long prime sum, we used approximations of the
test function by Chebyshev polynomials, which allows most of the
summation to be carried out in integer arithmetic. In this way we can
effectively control the round-off error in the computation. On the other
hand, since the longest sum that we computed was over the primes
$\le 2.5\times 10^{16}$, standard double-precision arithmetic would
also suffice to control the round-off errors effectively for many
choices of $g$, e.g.\ by using pairwise summation.}
Our best-performing twist was $-9334602088654580277283 = 
-568391 \times 2345033 \times 7003250461$, which yielded the lower bound
$\log|\Delta|\ge 137.5158$ using $X=\log(1.3\times 10^{15})$ and
$M=312$ in \eqref{fx}.

\begin{proof}[Proof of Theorem~\ref{numericalresult}]
We illustrate the proof for the case of $N=\textrm{{\tt RSA-210}}$;
the parameter choices that we used to complete the proof for the other
challenge numbers are summarized in Table~\ref{prooftable}.

Suppose, to the contrary, that all prime 
factors of $N$ have multiplicity $>1$. Then 
$N=s^2|\Delta|^3$, where $|\Delta|$ is the conductor of $\chi_{-N}$.
We verified that $N$ is not a perfect cube, and
our computation showed that $\log|\Delta| > 137.515$, so that
$1<s< 1.3\times10^{15}$.  
This leads to a contradiction since, as 
a by-product of computing the prime
sum in the explicit formula, we checked that 
$N$ has no non-trivial factor $<1.3\times 10^{15}$.
%$\log(\textrm{{\tt RSA-210}}) = 482.1373786677942891998\ldots\,$. 
\end{proof}
\begin{table}[h!]
\begin{tabular}{r|r|r|r|r}
$N$ & $q$ & $e^X$ & $M$ & $\log|\Delta|\ge$\\ \hline
{\tt RSA-210} & $-9334602088654580277283$ & $1.3\times10^{15}$ & 312 & 137.5158 \\
{\tt RSA-220} & $970064118336081477109$ & $1.8\times10^{15}$ & 312 & 145.2599 \\
{\tt RSA-230} & $2298170792729446843801$ & $2.5\times 10^{16}$ & 312 & 150.8289\\
{\tt RSA-232} & $-2779263460367695431079$ & $10^{16}$ & 312 & 152.7847
\end{tabular}
\caption{Parameters used in the proof of Theorem~\ref{numericalresult}}
\label{prooftable}
\end{table}
\begin{remark}
We do not need the full strength of the bound $\log |\Delta| > 137.515$ 
to prove the theorem, as we separately ruled out factors
\begin{comment}
$\le 5.3 \times 10^{16}$ 
using a basic implementation of the Pollard--Strassen algorithm,
and so $\log|\Delta| > 135.04$ suffices. The latter is doable 
with our best performing twist using a shorter sum, 
$p\le 8.02\times 10^{14}$ (instead of $p\le 1.3 \times 10^{15}$). 
Also,
\end{comment}
$\le 10^{20}$ using the implementation of Pollard's $p-1$
method\footnote{Like Pollard--Strassen, this method can be used to rule
out small factors, with comparable complexity.
Pollard--Strassen has a theoretical advantage, in that the $p-1$ method
produces an inconclusive result if a randomly-chosen residue happens
to have exactly the same order modulo every prime factor of $N$;
however, the chance of that occurring is vanishingly small, so this
is irrelevant in practice.} 
in GMP-ECM \cite{gmp-ecm}, which takes less than
a day on a computer with 80GB of memory.
Therefore, the bound $\log |\Delta| > 130.02$ suffices
to prove the theorem, which
reduces the size of the prime sum needed to $p\le 2.66\times 10^{14}$.
A similar improvement was noted for the other challenge numbers.
\end{remark}

\begin{figure}[!ht]
\includegraphics[scale=0.6,trim = 0 15 0 40, clip]{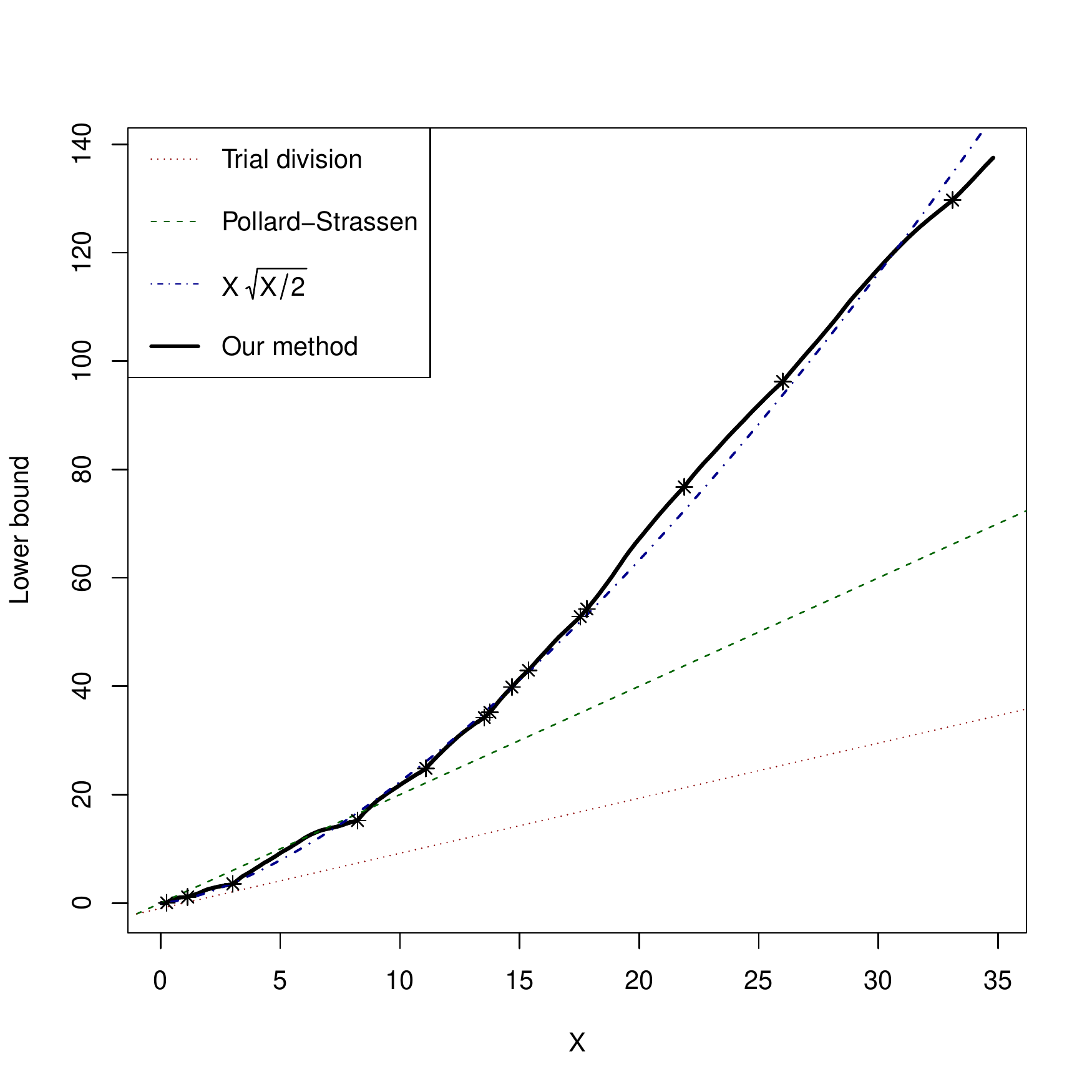}
\caption{The lower bound (under GRH) produced by our method when applied 
to {\tt RSA-210} using the primes $\le e^X$. The $\ast$s mark 
the places where the best performing twist available so far changes.
The slope increases noticeably at each $\ast$, except 
towards the end, where it is likely that we are not finding the best twists.
Also note that the
$\sqrt{2}$ in our fitted curve is likely not an absolute constant,
but varies as a small power of $\log\log|\Delta|$.}
\label{best1e15}
\end{figure}

In Figure~\ref{best1e15}, we present data about the practical efficiency of our 
algorithm, providing further evidence for the $2/3$ exponent.
To clarify the situation, 
recall that $\theta^*$ is chosen to balance the number of terms 
in the prime sum versus the number of twists we need to try
so that, with high probability, the zero contribution 
is small for at least one twist.
We accomplish this by taking $g$ with support inversely
proportional to the largest gap that we anticipate after trying
$\exp((\log|\Delta|)^{\theta})$ twists, and so our prime sum has length  $\le
\exp{(c_1(\log|\Delta|)^{\eta_{\Delta}(\theta)})}$ 
for some suitable constant $c_1>0$.
The $2/3$ arises as Prop.~\ref{uspgapprop} suggests that
$\eta_{\Delta}(\theta)\approx 1-\theta/2$, and on solving $1-\theta/2=\theta$.
More precisely, it arises since if we 
sample $\gamma_1(q\Delta)$ over $q\in\mathcal{F}$, $|q| < \exp{X}$, 
with $X$ much smaller than $\log |\Delta|$, then Prop.~\ref{uspgapprop}
suggests we should encounter at least one $\gamma_1(q\Delta) \ge 4 \sqrt{X}/\log|\Delta|$. 
Therefore, looking back at Prop.~\ref{testfunctionprop},
we expect to obtain a lower bound very close to $\log|\Delta|$ 
in time $\lesssim \exp(X+\frac{c_2 \log|\Delta|\log\log|\Delta|}{4\sqrt{X}})$, where 
$c_2>0$ is a constant implied by the proposition.
Optimizing, we choose $X = (\frac{c_2}{4}\log|\Delta|\log\log|\Delta|)^{2/3}$.

This reasoning on its own does not fully 
explain what we observe in Figure~\ref{best1e15}, 
which is that by sampling $\exp{X}$ twists
and using a prime sum of length $\exp{X}$, we seem to obtain 
a lower bound like $X^{3/2}$, even for intermediate 
values of $X$ much smaller than $(\log|\Delta|)^{2/3}$.
This behavior is expected 
if one treats the prime sum as a sum of independent random variables,
as in Prop.~\ref{randomprimemodel},
but it would be reassuring to see it from the zeros directly.
The difficulty towards this is that if $h$ does not have sufficient decay
outside the large gap, then we cannot bound 
the contribution of the zeros effectively (cf.\ Prop.~\ref{testfunctionprop}). 
Nevertheless, one can obtain a heuristic explanation, as follows. 

We choose $h$ with $0\le h(t)\le 1$, say, and mostly concentrated
within the interval $[-1/X,1/X]$, roughly speaking. 
We let $N_{\chi}(t):=\#\{0\le \gamma(\chi)<t\}$ denote the zero-counting function, 
and assume $L(1/2,\chi)\ne 0$ for simplicity. Then  
$\sum_{\gamma(\chi)} h(\gamma(\chi)) =2\int_0^{\infty} h(t) 
\,dN_{\chi}(t)$.
The contribution of the smooth part 
of $N_{\chi}(t)$ to the integral is $\sim g(0)\log|\Delta|$, which is precisely 
the left-hand side of \eqref{explicitformula}. Therefore, the 
prime sum contribution, which is basically our lower bound,
should be  $\approx -2\int_0^{\infty} h(t)\, dS_{\chi}(t)$, where
$S_{\chi}(t)$ is 
the fluctuating part of $N_{\chi}(t)$. This last integral is typically very small
due to the random nature of $S_{\chi}$, 
except we purposefully introduced a bias in it via our choice of twist, 
resulting in a large gap around the center
of size like $\sqrt{X}/\log|\Delta|$.
The contribution of this bias to the prime sum is essentially, for a reasonable 
$h$, $-2\int_0^{\sqrt{X}/\log|\Delta|} h(t)\, dS_{\chi}(t) \gg \sqrt{X}$. 
Since we expect the contribution from the interval
$[\sqrt{X}/\log|\Delta|,\infty)$  
to wash out in comparison,\footnote{This is the part 
of the heuristic that we cannot prove, even under the GRH, unless 
 $h$ has sufficient decay outside the zero gap.} we should get a lower bound 
like $g(0)\log|\Delta|\gg \sqrt{X}$. Finally, since $g(0) \ll 1/X$, 
we should get $\log|\Delta| \gg X^{3/2}$. 

This heuristic indicates how the
running time of our algorithm is controlled by
extreme (negative) values of $S_{\chi}(t)$. If $S_{\chi}(t) \ll_t
(\log|\Delta|)^{1/2+o(1)}$, for example, then we cannot expect a running time
better than $\exp((\log|\Delta|)^{1/2+o(1)})$, even if we
allow for an oracle supplying the algorithm with 
the best twist in any requested range. (This is in agreement with
Conj.~\ref{etaconj}.)
On the other hand, if $S_{\chi}(t)$ can get much larger (without
violating the GRH, so our method can still apply!),
then there is no such barrier.

\section{Refinements}
In this section, we describe a few refinements of our basic method and
indicate some directions for future research.

\subsection{Linear programming}
A natural question is whether one can make better use of the zero
sum in \eqref{explicitformula} than simply ignoring it by positivity,
as in \eqref{lowerbound}, especially since it typically dominates the
right-hand side when $X$ is small.  One idea is to apply the explicit formula
\eqref{explicitformula} with various choices of test function, setting
up a system of inequalities, and try to obtain a non-trivial lower
bound for the sum over zeros.  Since $\log|\Delta|$ also appears in
\eqref{explicitformula} and remains unknown to us, the logic of this may
seem circular at first glance, but we gain some additional information
coming from the fact that the zeros occur discretely, as we elaborate
below.

An immediate practical problem is that
the system involves infinitely many variables, since the zero sum is
infinite, and $h$ cannot be compactly supported (it has to be analytic).
Nevertheless, one can reduce to a finite number of variables, 
without too much loss, using an explicit estimate of the form 
$|\sum_{|\gamma| \ge T}h(\gamma)|=|2\int_T^{\infty} h(t)\,dN_{\chi}(t)|
\le \mathcal{E}(h,T)$, $T>0$, 
 simply bounding the conductor by the modulus, 
 and using known estimates for $S_{\chi}(t)$. Hence
\begin{equation}
\label{zerobounds1}
\sum_{|\gamma|<T} h(\gamma) - \mathcal{E}(h,T)\le
\sum_{\gamma} h(\gamma) \le \sum_{|\gamma| <T} h(\gamma) +
\mathcal{E}(h,T).
\end{equation}
A more serious problem is that the system is not
linear in the zero ordinates, and therefore is likely very unstable. 
We linearize the system, at the cost of having more variables or 
extra solutions, by subdividing the interval $[0,T)$ into bins
of size $\delta$, so that the variables become the count of 
zeros in each bin rather than the zeros themselves.
Specifically, for each integer $V>0$, and each integer $v\in[0,V)$,
let $\delta:=T/V$, 
$I(v):=[v\delta,(v+1)\delta)$, $m(v):=\frac{1}{2}\#\{\gamma:|\gamma| \in I(v)\}$,
$h^+(v):=\sup_{t\in I(v)} h(t)$, and $h^-(v):=\inf_{t\in I(v)} h(t)$. 
Then we have
\begin{equation}
\label{zerobounds2}
2\sum_{0\le v<V} m(v) h^-(v)\le
\sum_{|\gamma|<T} h(\gamma) \le 2\sum_{0\le v<V} m(v) h^+(v)\,.
\end{equation}
Applying \eqref{zerobounds1} and \eqref{zerobounds2} with a set of test
functions $\{(g_k,h_k):1\le k\le K\}$ of our choice, we 
obtain a linear system
\begin{equation}
\label{lpsystem}
\begin{aligned}
2\sum_{0\le v<V} m(v) h_k^-(v) -\mathcal{E}(h_k,T) 
\le g_k(0) \log|\Delta| + &g_k(0)\log q - P(g_k,q) \\
&\le 2\sum_{0\le v<V} m(v) h_k^+(v)+\mathcal{E}(h_k,T)
\end{aligned}
\end{equation}
for $k=1,\ldots,K$, where $\chi_q$ is the twist used, 
and $P(g_k,q)$ denotes the contribution from the prime sum and
integral terms in \eqref{explicitformula}.
Note that $V$ controls the size of
each bin, and $T$ controls the point where we truncate the zero sum. 
Finally, we let ${\tt logd}$ denote the unknown value of $\log|\Delta|$, and 
feed the system \eqref{lpsystem} into a linear programming solver,
such as {\tt GLPK} \cite{glpk}, 
with {\tt logd} as the objective function to be minimized.

We experimented with this approach for {\tt RSA-210} 
using various choices of $q$, $T$, $V$, and  
$\{(g_k,h_k):1\le k\le K\}$. 
For example, one of the better performing twists 
found, as in \S\ref{numerics}, was $q=-65123121667$.
Using this twist, we set up the system \eqref{lpsystem}
with $T=4$ and $V=500$ 
(so that $\delta=0.008$, which is smaller than the mean zero
spacing $\approx 0.013$), and 
$$
h_k(t) = \left[\frac{\sin(Xt/2k)}{(Xt/2k)}\right]^{2k},\qquad k=1,\ldots,7,
\qquad X=7\log 10, 
$$
so that $g_k(x)$, $k=1,\ldots,7$, are supported on $|x|\le X$.\footnote{The inequalities
in \eqref{lpsystem} were imposed in both directions except for $h_1$, where
only the lower bound was used.} We imposed
an integer variable constraint on $m(v)$, $v=0,\ldots,44$, with the rest 
being real variables. The integer variables are located at the beginning,
covering the interval $[0,0.36)$, which is reasonable since
$h_k(t)$ is not too small there and so detected zeros have more weight. 
Solving this system, we obtained
the lower bound $\log|\Delta| \ge 47.153$, of which $2.494$ came from the zeros.
This represents an improvement of about $5.5\%$ over using $\max_{1\le k\le 7}
[P(g_k,q)-g_k(0)\log q]$ alone, which is comparable to the improvement
that we obtained from using the Pollard $p-1$ algorithm to rule out
small values of $\ell$, as remarked after the proof of
Thm.~\ref{numericalresult}. Although this is a modest improvement on a
logarithmic scale, it makes
a substantial difference in the length of the final prime sum.

In general, further gains are possible by using
more integer variables, a smaller grid spacing (smaller $\delta$),
or additional test functions, in 
that order of importance. 
In reality, adding more test functions of compact support of size $X$  
loses impact quickly, which is not surprising because 
such functions cannot resolve zeros to better than $O(1/X)$.
Most of the gains, in fact, come from imposing integer constraints.
If no integer constraints are
imposed, the improvement in the above example 
goes down significantly, to around $1\%$.
Also, if all the variables are real, then the linear programming
approach is closely related to the approach of varying the test
function, described in \S\ref{varytestfunction}, and so cannot
be expected to do significantly better.\footnote{In 
the real variable case one can obtain an easily verifiable
certificate that the solution is indeed correct by solving the dual problem.
This is not available if one imposes integer constraints, and so one
has to trust the linear programming software in that case.}

However, one has to weigh the extra time 
it takes to set up and solve the mixed integer programming problem 
against the time it takes to simply compute a longer prime sum.
In the above example, it took about
$15$ minutes to solve the problem, but it can take 
much longer if more integer constraints are imposed.
%Lastly, it is worth noting
%that the linear programming solution of \eqref{lpsystem} typically had 
%most of the variables $m(v)$ set to zero. 
It is tempting to think that if one could allow the number of integer
variables to grow very large
without significant time penalty then there would be
no limit to the improvement that could be obtained.
We offer the following theoretical evidence in favor of that belief.
\begin{definition}
Let $S=\{z\in\C:|\Im(z)|<1/2\}$.
A \emph{divisor} on $S$ is a function
$m:S\to\Z$ which is supported on a discrete subset of $S$.  A divisor
$m$ is \emph{admissible} if $m(-\gamma)=m(\gamma)\ge0$
for all $\gamma\in S$ and there is a number $A\ge0$ such that
$\sum_{\substack{\gamma\in S\\|\gamma|\le T}}m(\gamma)\ll T^A$ for
all $T\ge1$.
\end{definition}

\begin{proposition}
\label{converseprop}
Let $m:S\to\Z_{\ge0}$ be an admissible divisor,
$d\in\R^\times$, and $\{c_n\}_{n=2}^{\infty}$ a sequence of complex
numbers satisfying $c_n\ll n^{-\delta}$ for some $\delta>0$.
Suppose that for every smooth, even function
$g:\R\to\C$ of compact support and cosine transform $h$ we
have the equality
\begin{align*}
g(0)\log|d|&=\sum_{\gamma\in S}m(\gamma)h(\gamma)
+2\sum_{n=2}^{\infty}c_ng(\log n)\\
&+g(0)\log(8\pi e^\gamma)
-\int_0^\infty\frac{g(0)-g(x)}{2\sinh(x/2)}\,dx
+(\sgn{d})\int_0^\infty\frac{g(x)}{2\cosh(x/2)}\,dx.
\end{align*}
Then $d$ is a fundamental discriminant,
$c_n=\frac{\Lambda(n)\chi_d(n)}{\sqrt{n}}$
for every $n\ge2$, and $m(\gamma)=\ord_{s=1/2+i\gamma}L(s,\chi_d)$
for all $\gamma\in S$.
\end{proposition}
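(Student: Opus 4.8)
The plan is to \emph{invert} the explicit formula: first turn the hypothesis into an identity of meromorphic functions by testing it against a one-parameter family of test functions, then recognise the resulting object as (the logarithmic derivative of) a completed Dirichlet $L$-function, and finally pin down which one. For the first step I would apply the assumed identity to smooth, compactly supported even approximations of $g_w(x)=e^{-w|x|}$, whose cosine transform is $h_w(t)=2w/(w^2+t^2)$, and let the supports grow. The prime sum becomes $2\sum_{n\ge2}c_nn^{-w}$ (convergent by $c_n\ll n^{-\delta}$), the zero sum becomes $\sum_{\gamma\in S}m(\gamma)\,2w/(w^2+\gamma^2)$ (convergent by admissibility of $m$), and the two archimedean integrals have the closed forms $\frac{\Gamma'}{\Gamma}(\tfrac12+w)-\frac{\Gamma'}{\Gamma}(\tfrac12)$ and $\tfrac12[\frac{\Gamma'}{\Gamma}(\tfrac{s+1}{2})-\frac{\Gamma'}{\Gamma}(\tfrac{s}{2})]$ with $s=\tfrac12+w$; after the duplication formula for $\frac{\Gamma'}{\Gamma}$ collapses the archimedean part, one obtains for $\Re s$ large
\begin{equation*}
2\sum_{n=2}^{\infty}c_n n^{1/2-s}
=\log\frac{|d|}{\pi}+\frac{\Gamma'}{\Gamma}\!\left(\frac{s+\mathfrak{a}}{2}\right)
-\sum_{\gamma\in S}m(\gamma)\,\frac{2(s-\tfrac12)}{(s-\tfrac12)^{2}+\gamma^{2}},
\qquad\mathfrak{a}:=\frac{1-\sgn d}{2}.
\end{equation*}
Testing against all $g_w$ is in fact equivalent to testing against all admissible $g$, since the $h_w$ are Poisson kernels and a tempered measure is determined by its Poisson integral, so no information is lost.

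Next I would read off the analytic structure. The right-hand side is meromorphic on $\C$, so $F(s):=\sum_{n\ge2}c_nn^{1/2-s}$ continues meromorphically, with simple poles exactly at $s=\tfrac12+i\gamma$ (residue $-m(\gamma)$) for $\gamma\in S$ and at $s=-\mathfrak{a}-2n$ (residue $-1$) for $n\ge0$, and no pole at $s=1$. Integrating $-F$ and exponentiating, normalised to tend to $1$ as $\Re s\to+\infty$, yields an entire $L(s)$ of order $1$ with $-L'/L=F$, whose zeros are the $\tfrac12+i\gamma$ with multiplicity $m(\gamma)$ together with simple zeros at $-\mathfrak{a}-2n$. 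Then $\Lambda(s):=(|d|/\pi)^{(s+\mathfrak{a})/2}\Gamma(\tfrac{s+\mathfrak{a}}{2})L(s)$ is entire of order $1$ (the trivial zeros cancel the $\Gamma$-poles) with $\Lambda'/\Lambda(s)=\tfrac12\sum_{\gamma}m(\gamma)\,2(s-\tfrac12)/((s-\tfrac12)^2+\gamma^2)$, which is odd under $s\mapsto1-s$; hence $\Lambda(s)=\pm\Lambda(1-s)$, with the sign necessarily $+1$ (equivalently $m(0)$ even, else the data is inconsistent). So $\Lambda$ has exactly the shape of the completed $L$-function of a primitive quadratic character of conductor $|d|$ and sign $\sgn d$: entire of order $1$, self-dual with root number $+1$, all non-trivial zeros in the \emph{open} critical strip (because $\gamma\in S$), and $L(s)=1+\sum_{n\ge2}b_nn^{-s}$ with $b_n$ of polynomial growth.

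The remaining, and hardest, step is to show that $L$ genuinely is such an $L$-function, i.e.\ that it has an Euler product. The functional equation alone does not force this — Davenport--Heilbronn-type combinations $e^{-i\theta}L(s,\psi)+e^{i\theta}L(s,\bar\psi)$ share the same gamma factor, conductor, and root number $+1$ — but such combinations have zeros with $\Re s>1$, i.e.\ outside $S$, so they cannot satisfy the hypothesis, and their logarithmic derivatives have coefficients far too large to obey $c_n\ll n^{-\delta}$. Concretely I would: use the theta transformation equivalent to $\Lambda(s)=\Lambda(1-s)$ to write $L$ as a finite linear combination of Dirichlet $L$-functions (Serre--Stark in weight $1/2$ and its weight-$3/2$ analogue, or the structure theory of degree-$1$ elements of the extended Selberg class); note that the functional equation with conductor exactly $|d|$ forces every character appearing to be primitive modulo $|d|$ of parity $\mathfrak{a}$; invoke the known fact that any \emph{nontrivial} combination of distinct Dirichlet $L$-functions has infinitely many zeros with $\Re s>1$, which contradicts the confinement of the zeros of $L$ to the strip and so forces the combination to collapse to a single term $L(s)=L(s,\chi)$; and observe that $\Lambda(s)=\Lambda(1-s)$ then forces $\chi=\bar\chi$, i.e.\ $\chi$ real.

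To finish, a real primitive character modulo $|d|$ of parity $\mathfrak{a}$ is $\chi_D$ for the unique fundamental discriminant $D$ with $|D|=|d|$ and $\sgn D=\sgn d$; its existence shows $|d|\in\Z$ and $d=D$, a fundamental discriminant, whereupon $F(s)=-\frac{L'}{L}(s,\chi_d)$ gives $c_n=\Lambda(n)\chi_d(n)/\sqrt{n}$ and the zeros of $\Lambda$ being the non-trivial zeros of $L(s,\chi_d)$ gives $m(\gamma)=\ord_{s=1/2+i\gamma}L(s,\chi_d)$. I expect the Euler-product step — distinguishing $L$ from a Davenport--Heilbronn-type impostor — to be the main obstacle: the first two steps are bookkeeping with the explicit formula, whereas this one needs real input (a converse-theorem/modular-forms argument plus the theory of zeros of linear combinations of $L$-functions), and it is precisely there that the hypotheses $c_n\ll n^{-\delta}$ and ``$m$ supported on $S$'' do their essential work.
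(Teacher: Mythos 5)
Your overall strategy is the same as the paper's: convert the explicit-formula identity into a statement about meromorphic continuation and functional equation of a candidate $\Lambda(s)$, then invoke the structure theory of degree-$1$ Dirichlet series to recognize it as a quadratic $L$-function. The early steps differ only cosmetically: you test against (approximations of) Poisson kernels $h_w(t)=2w/(w^2+t^2)$ to continue $F(s)=\sum c_n n^{1/2-s}$ directly, whereas the paper constructs a Hadamard product $F(z)$ from the divisor $m$ and a Paley--Wiener argument to show that $\tfrac{F'}{F}-\tfrac{\Phi'}{\Phi}$ extends to an odd entire function, from which $\Phi$ inherits $F$'s zeros and continuation. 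Both routes reach the same place and both acknowledge the same final obstacle, which you identify correctly.

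The gap is in that last step, and it is not merely a matter of filling in citations. Because $c_n\ll n^{-\delta}$ only gives absolute convergence of $L(s)=\exp\!\bigl(\sum c_n n^{1/2-s}/\log n\bigr)$ for $\Re s>\tfrac32-\delta$, not $\Re s>1$, you cannot invoke Serre--Stark or Kaczorowski--Perelli's degree-$1$ converse theorem as black boxes: both assume a Ramanujan-type bound on the Dirichlet coefficients, i.e.\ convergence for $\Re s>1+\varepsilon$, which you do not yet have. The paper flags exactly this issue and substitutes a self-contained argument following Soundararajan: a contour-shift computation of $\sum_n a_n e(n\alpha)e^{-2\pi ny}$ that proves the coefficients $a_n$ of $L(s)$ are periodic modulo $|d|$. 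Your second imprecision is the claim that ``the functional equation with conductor exactly $|d|$ forces every character appearing to be primitive modulo $|d|$'' and that ``any nontrivial combination of distinct Dirichlet $L$-functions has infinitely many zeros with $\Re s>1$'': the correct structure theorem for periodic Dirichlet series (the paper cites \cite{sw}) yields $L(s)=D(s)L(s,\chi)$ with $\chi$ primitive mod some $q\mid |d|$ and $D$ a Dirichlet polynomial over primes dividing $|d|/q$, and the polynomial factor is not ruled out by zero-distribution arguments. The paper eliminates $D$ and pins down $q=|d|$ using the functional equation \eqref{Dfunceq}, and separately uses Fujii's theorem on zeros of $\Lambda(s,\chi_1)/\Lambda(s,\chi_2)$ to force $\chi=\overline\chi$; neither of these steps appears in your sketch. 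So while your proposal has the right skeleton, the part you correctly flagged as the ``hardest step'' is under-specified in a way that matters: the naive converse-theorem appeal fails at the hypothesis level, and the Dirichlet-polynomial factor is a real loose end.
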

Thus, the explicit formula is rigid in the sense that the only identities
of the shape \eqref{explicitformula} that can hold for all test functions
are the ones arising from quadratic character $L$-functions.
We remark that the key to this proposition, whose full proof is given in
the appendix, is that $m$ is integer valued and supported on a discrete
set.  Unfortunately, the proposition is ineffective,
in that it does not predict how many or how complicated we must choose
the test functions before finding a system that yields a good lower
bound for $\log|d|$.
However, note that under GRH, the $\Delta\in\mathcal{F}$ with $|\Delta|\le x$
are distinguished from one another by the values of $\chi_\Delta(p)$
at primes $p\le O(\log^2{x})$ \cite{LLS}.
This statement alone does not offer any
indication of how to find $\Delta$ given a list of its initial character
values, but together with Prop.~\ref{converseprop} it suggests that a given
$\Delta$ might be
captured by the system \eqref{lpsystem} using test functions supported
up to $X\approx2\log\log|\Delta|$, provided that we are
allowed to take $V$ and $K$ arbitrarily large.
However, our numerical experiments so far, which were limited to at most
a few hundred integer variables, have not corroborated this speculation, even
allowing for larger values of $X$.

\subsection{Finding correlating characters}
\subsubsection{Lining up the initial primes}
\label{smallprimes}
In order to improve the efficiency of the brute force search, we
chose $q$ so as to line up the values of the prime sum for small $n$,
i.e.\ so that $\chi_{q\Delta}(p)=1$ for small primes $p$.  Of course there
is no guarantee that doing so is optimal, and indeed it is likely
that the best choices of $q$ of a given size adhere to this principle
only loosely, i.e.\ they may sacrifice a few small values of $p$ in order
to line up many more. However, if we have the resources to evaluate
the prime sum for a fixed number of $q$, regardless of size (a reasonable
assumption, since the only operation performed on $q$ itself
is reduction mod $p$), then it makes sense to line up the small primes
in attempt to skew the distribution of values in our favor.

To be more precise, consider an idealized form of the lower bound
\eqref{lowerbound2}
with $h$ a $\delta$-function and $g\equiv 1$.  Then for a prime power
$n=p^k$, the corresponding term of \eqref{lowerbound2} is
$2\chi_{q\Delta}(n)\Lambda(n)/\sqrt{n}$.
The expected value of this term, that is its average value
over all $q\in\mathcal{F}$, is easily seen to be
$0$ if $k$ is odd and $2\frac{p}{p+1}\frac{\Lambda(n)}{\sqrt{n}}$
if $k$ is even.\footnote{$\mathbb{E}(\chi_{q\Delta}(p^{2k}))
=\mathbb{E}(\chi_{q\Delta}(p^2)) = \frac{\phi(p^2)}{p^2-1}=\frac{p}{p+1}$,
where the second equality holds because $q\in\mathcal{F}$,
so that $q\not\equiv 0\pmod{p^2}$.}
%(This holds for all primes, including $p=2$.)
Thus, if we force $q$ to satisfy $\chi_{q\Delta}(p)=1$, this introduces a
positive bias in the prime sum of
$$
\sum_{k=1}^\infty 2\frac{\Lambda(p^k)}{p^{k/2}}
\begin{cases}
\frac1{p+1}&\text{if }2\mid k,\\
1&\text{if }2\nmid k
\end{cases}
=\frac{2\log{p}}{p-1}\left(\sqrt{p}+\frac1{p+1}\right).
$$
However, it also comes with a price, in that we expect such a $q$
to be about $2(p+1)/p$ times larger than a fundamental
discriminant chosen randomly without regard to the value of $\chi_q(p)$.
%(Again this works for $p=2$ as well.)
Thus, our expected net improvement is
\begin{equation}
\label{primesumbias}
\frac{2\log{p}}{p-1}\left(\sqrt{p}+\frac1{p+1}\right)
-\log\frac{2(p+1)}p.
\end{equation}
(A similar argument applies to forcing $\chi_{q\Delta}(-1)=1$, from
which we expect a net improvement of $\frac{\pi}2-\log{2}$.)  It turns
out that \eqref{primesumbias} is positive for $p\le 251$ but negative
for larger primes.

\subsubsection{The shortest lattice vector problem}
%\subsubsection{The nearest codeword and closest lattice vector problems}
It is plausible that there is a better strategy for finding good twists than 
a brute-force search, meaning a strategy that
can find the same quality twist as brute force but using much less
sampling. If one could be assured of finding
 $\gamma_1(q\Delta) \gg \sqrt{X}/\log|\Delta|$ in
a subset of $\{q\in\mathcal{F}:(q,\Delta)=1,|q|\le \exp{X}\}$ 
of size $\ll \exp(X^{\tau})$, $0\le \tau<1$, 
then one could improve the $2/3$ exponent 
to $\max\{\theta^*_{\min},\frac{2\tau}{2\tau+1}\}$, where
$\theta^*_{\min} := \inf_{\theta>0}\eta_{\infty}(\theta)$,
provided the subset can be determined easily.
An obvious candidate is the subset of smooth fundamental discriminants.
For example, one could search for a product of real primitive characters
$\chi_q=\chi_{q_1}\cdots\chi_{q_m}$, $|q_j|<Q$, $q_j\ne q_k$, that 
correlates strongly with $\chi_d$, so as to make  
\begin{equation}
\label{llltarget}
2\sum_{p<P} \frac{\log p}{\sqrt{p}} - 2\sum_{p<P} \frac{\chi_q(p)\chi_d(p)\log
p}{\sqrt{p}} + \log|q| 
%\sum_{p<P} \frac{\chi_{q_1}(p)\cdots\chi_{q_m}(p)\chi_{d}(p) \log p}{\sqrt{p}}
%+\log|q_1| +\cdots +\log|q_m|,
\end{equation}
small, in the hope that it will lead to an unusually large
prime sum in the explicit formula.
The question of finding a good choice of $\chi_q$ 
can be framed in terms of finding a short
vector in the lattice generated by the rows of the following
$(n+m+1)\times(n+m+1)$ matrix, as we explain next. 
(This idea was applied in \cite{odlyzko-teriele} in 
the $t$-aspect to disprove the Mertens conjecture.)
\begin{displaymath}
\small
\begin{bmatrix}
i(d,p_1) &i(d,p_2) &\cdots & i(d,p_n) & 0 & 0 & \cdots&0 & 2^M \\
i(q_1,p_1) &i(q_1,p_2)& \cdots & i(q_1,p_n) & \lfloor 2^M\sqrt{\log|q_1|}\rfloor& 0 & \cdots&0 & 0 \\
i(q_2,p_1) &i(q_2,p_2)& \cdots & i(q_2,p_n) & 0 &
\lfloor2^M\sqrt{\log|q_2|}\rfloor & \cdots&0 & 0 \\
\vdots &\vdots &\ddots & \vdots & \vdots & \vdots & \ddots  &\vdots & \vdots  \\
i(q_m,p_1) &i(q_m,p_2)& \cdots & i(q_m,p_n) & 0 & 0 & \cdots & \lfloor2^M\sqrt{\log|q_m|} \rfloor& 0 \\
2w(p_1) & 0&\cdots & 0 &  0 & 0 & \cdots&0 &  0 \\
0 & 2w(p_2) & \cdots & 0 & 0& 0 & \cdots &  0 &0\\ 
\vdots &\vdots & \ddots& \vdots & \vdots & \vdots &\vdots & \vdots &  \vdots\\ 
0 & 0&\cdots & 2w(p_n) & 0 &0 & \cdots &0  & 0
\end{bmatrix}
\end{displaymath}
\begin{align*}
\small
w(p):= \left\lfloor \frac{2^{M+1}\sqrt{\log p}}{p^{1/4}}\right\rfloor, 
\quad i(q,p):=\frac12\left(1+\chi_{q^*}(p)\right)w(p),
\quad q^*=\begin{cases}
(-1)^{\frac{q-1}{2}}q &\textrm{if $q$ odd prime,}\\ 
q & \textrm{if $q\in\{-4,8,-8\}$.}
\end{cases}
\end{align*}
Here, $M$ is a large integer of our choice (in our application it was a random 
integer in $[75,150)$).
The weight $w(p)$ comes from \eqref{llltarget}, and
indicates that it is more important to correlate smaller primes. 
The weight $\sqrt{\log|q_j|}$ in the $(n+1)$st to $(n+m)$th columns indicates
that using $\chi_{q_j^*}$ will incur a penalty imposed according to
the explicit formula. 
The bottom $n$ rows indicate that the $k$th entry in each row, 
$1\le k\le n$, should be treated
modulo $2w(p_k)$. Here, it is helpful to note that 
$i(q_j,p_k)$ is either $0$ or $1$ times $w(p_k)$, and so working modulo
$2w(p_k)$ essentially means that only one multiple of each row is needed.
Therefore, a vector in the lattice with 
a non-zero $(n+m+1)$-st entry, can be written in the form
\begin{equation}
\label{charvector}
\left(y_1w(p_1),\ldots,y_nw(p_n),u_1\sqrt{\log|q_1|},\ldots,u_m\sqrt{\log|q_m|},2^M\right),
\end{equation}
where $y_k,u_j\in\{0,1\}$. 
The character generated by this vector
is $\chi_{q_\mathcal{J}}:=\prod_{j\in\mathcal{J}} \chi_{q_{j}^*}$,
where $\mathcal{J}:=\{1\le j\le m,u_{j}=1\}$, and 
it has discriminant $q_{\mathcal{J}} = \prod_{j\in\mathcal{J}} q_{j}^*$.
The $y_k$ are $0$ or $1$ according to whether 
$\chi_{\mathcal{J}}(p_k) = \chi_d(p_k)$
or not. Hence, in order for the vector \eqref{charvector} to be short, it means that
$$
\sum_{\chi_{q_\mathcal{J}}(p_k) \ne \chi_d(p_k)} w(p_k)^2 + \sum_{j\in\mathcal{J}}\left\lfloor
2^M\sqrt{\log|q_j|}\right\rfloor^2+2^{2M}
\approx 2^{2M}\left[4\sum_{\substack{\chi_{q_\mathcal{J}}(p_k) \ne\\ \chi_d(p_k)}} \frac{\log p_k}{\sqrt{p_k}} 
+ \sum_{j\in\mathcal{J}}\log|q_j|+1\right] 
$$
has to be small. This expression is essentially the same as \eqref{llltarget},
which we wish to minimize, but with $\chi_q = \chi_{q_\mathcal{J}}$ and
$q=q_{\mathcal{J}}$. Thus, it is seen that one can find a good choice of $\chi_q$ 
if one can find a short vector in the lattice.

Finding the shortest vector in a lattice 
is conjectured to be $NP$-hard in the $l^2$-norm and the corresponding decision
problem is conjectured to be $NP$-complete (see \cite{ajtai}).
However, one can find relatively short vectors 
in polynomial time using the 
LLL algorithm of Lenstra, Lenstra, and Lov\'as \cite{lll}, which 
 produces a basis that is nearly orthogonal.
The LLL algorithm was first used to factor a primitive
univariate polynomial in polynomial time. It does not necessarily find the
shortest vector, and it usually does not, but it can find relatively
short vectors quickly.

We applied LLL to our lattice with $P$ and $Q$ ranging between $100$ to 
over $1000$. While it did yield above-average choices of $\chi_q$, such as $q=-73147$,
our best-performing twists ultimately came from the brute-force
approach described in \S\ref{smallprimes}.
%\footnote{We 
%experimented with variants of the lattice, such as forcing the signs
%of $q$ and $d$ to match, but the improvements were modest.}
%It is possible that the subset of smooth discriminant is
%not rich enough to compete with other simpler approaches.

\subsection{More general twists}
If $\pi$ is a cuspidal automorphic representation of $GL_r(\A_\Q)$
with conductor $q$ relatively prime to $\Delta$, then the twist
$\pi\otimes\chi_\Delta$ has conductor $q|\Delta|^r$. Assuming GRH for
the associated $L$-function $L(s,\pi\otimes\chi_\Delta)$, we get a lower
bound for $\log|\Delta|$ via the explicit formula.  Thus, the idea of
using twists as in \S\ref{twistsection} admits a vast
generalization.

For any natural family of twists, one can expect a more general version of
Prop.~\ref{optimalprop} to hold, i.e.\ for each $X>0$ there will be some
optimal choice of input data $(\pi,g)$, where $\pi$ is an element of the
family and $g\in\mathcal{C}(X)$ is a test function to use in the explicit
formula. For instance, considering the family of quadratic twists as in
\S\ref{twistsection}, it is easy to see that the right-hand side
of \eqref{lowerbound2} is bounded above by $O_X(1)-\log|q|$, uniformly
for $g\in\mathcal{C}(X)$.  Thus, only finitely many $q$ are relevant,
so it follows from Prop.~\ref{optimalprop} that there is a pair
$(q,g)\in\mathcal{F}\times\mathcal{C}(X)$
which maximizes the lower bound \eqref{lowerbound2}.

Similarly, one can expect an analogue of Prop.~\ref{runningtimeprop} to
hold for any given family. It would be of interest to study other families
to see which yield the best performance.
We conclude by listing a few families of $L$-functions that would make
good candidates for future investigations.
\begin{itemize}
\item \emph{Elliptic curve $L$-functions}. Can one make use of the BSD
conjecture and the existence of high-order zeros at the central point
to force zero repulsion?
\item \emph{Dedekind $\zeta$-functions}. Can one make use of the
existence of towers of number fields of bounded root discriminant?
\item \emph{Rankin--Selberg products}. Can one make use of the algebraic
structure of the coefficients of $L$-functions to find correlating
twists quickly?
\end{itemize}

\appendix
\section{Proofs}
\subsection{Proof of Prop.~\ref{optimalprop}}
Let $g_n\in\mathcal{C}(X)$, $n=1,2,3,\ldots$, be a maximizing sequence
for $l(g)$, with corresponding
cosine transforms $h_n$. Since each $h_n$ is non-negative, we have
$|g_n(x)|\le g_n(0)=1$. Therefore, for $j\in\Z_{\ge0}$,
\begin{equation}
\label{hnjbound}
\bigl|h_n^{(2j)}(0)\bigr|=\left|2(-1)^j\int_0^Xx^{2j}g_n(x)\,dx\right|
\le\frac{X^{2j+1}}{j+\frac12},
\end{equation}
so that $h_n^{(2j)}(0)$ varies within a compact set for each fixed $j$.
Applying Cantor's diagonal argument, we may assume without loss of
generality that the sequence $\bigl\{h_n^{(2j)}(0)\bigr\}_{n=1}^\infty$
converges for every $j$.
Put $c_j=\lim_{n\to\infty}h_n^{(2j)}(0)$ and
$h_\infty(t)=\sum_{j=0}^{\infty}\frac{c_j}{(2j)!}t^{2j}$.
Then from \eqref{hnjbound} it follows that $h_\infty$ is an entire function
and $h_n(t)$ converges uniformly to $h_\infty(t)$ on compact subsets of $\C$.
In particular, $h_\infty(t)\ge0$ for all $t\in\R$.

Next, for any $g\in\mathcal{C}(X)$ with cosine transform $h$, we have
\begin{align*}
\log(8\pi e^\gamma)
&-\int_0^\infty\frac{1-g(x)}{2\sinh(x/2)}\,dx
+\chi_\Delta(-1)\int_0^\infty\frac{g(x)}{2\cosh(x/2)}\,dx\\
&=-\frac1{\pi}\int_\R\Re\frac{\Gamma_\R'}{\Gamma_\R}\!\left(
\frac12+a+it\right)h(t)\,dt,
\end{align*}
where $\Gamma_\R(s)=\pi^{-s/2}\Gamma(s/2)$ and $a\in\{0,1\}$ is such
that $(-1)^a=\chi_\Delta(-1)$.
By Stirling's formula we have
$\Re\frac{\Gamma_\R'}{\Gamma_\R}\!\left(\frac12+a+it\right)
=\frac12\log(1+|t|)+O(1)$,
so that
$$
\frac1{\pi}\int_\R
\Re\frac{\Gamma_\R'}{\Gamma_\R}\!\left(\frac12+a+it\right)h(t)\,dt
=\frac1{\pi}\int_0^\infty\log(1+t)h(t)\,dt+O(1).
$$
Moreover, since $|g(x)|\le 1$ for all $x$, we have
$$
2\sum_{n=1}^{\infty}\frac{\Lambda(n)\chi_\Delta(n)}{\sqrt{n}}g(\log{n})
\ll 1,
$$
where the implied constant depends only on $X$.

Returning to our construction, since $g_n$ is a maximizing sequence,
we may assume without loss of generality that $l(g_n)$ is
bounded below. Together with the above observations, we thus have that
$\frac1{\pi}\int_0^\infty\log(1+t)h_n(t)\,dt\le C$ for some constant $C$.
Hence, for any $T>0$, we have
$$
\frac1{\pi}\int_0^T\log(1+t)h_\infty(t)\,dt
=\lim_{n\to\infty}\frac1{\pi}\int_0^T\log(1+t)h_n(t)\,dt
\le C.
$$
Since $T$ is arbitrary and $\log(1+t)h_\infty(t)$ is non-negative, we see
that
\begin{equation}
\label{hsintbound}
\frac1{\pi}\int_0^\infty\log(1+t)h_\infty(t)\,dt\le C.
\end{equation}
In particular, $h_\infty\in L^1([0,\infty))$,
so its cosine transform
$g_\infty(x)=\frac1{\pi}\int_0^{\infty}h_\infty(t)\cos(xt)\,dt$
is well-defined and continuous.
Moreover, by \eqref{hsintbound} we have
$$
\frac1{\pi}\int_T^\infty h_\infty(t)\,dt
\le\frac1{\pi}\int_0^\infty\frac{\log(1+t)}{\log(1+T)}h_\infty(t)\,dt
\le\frac{C}{\log(1+T)},
$$
and similarly
$\frac1{\pi}\int_T^\infty h_n(t)\,dt\le\frac{C}{\log(1+T)}$ for all $T>0$.
Therefore,
$$
\left|g_n(x)-\frac1{\pi}\int_0^T h_n(t)\cos(xt)\,dt\right|
\le\frac{C}{\log(1+T)}
$$
and
$$
\left|g_\infty(x)-\frac1{\pi}\int_0^T h_\infty(t)\cos(xt)\,dt\right|
\le\frac{C}{\log(1+T)}.
$$

Now, let $\varepsilon>0$ be given, and choose $T>0$ large enough that
$\frac{C}{\log(1+T)}\le\frac{\varepsilon}3$.
Further, let $N\in\Z_{\ge0}$ be such that $n>N$ implies that
$|h_n(t)-h_\infty(t)|<\frac{\pi}{3T}\varepsilon$ for all $t\in[0,T]$.
Then the above inequalities yield
$$
\bigl|g_n(x)-g_\infty(x)\bigr|\le\frac{2C}{\log(1+T)}+
\left|\frac1{\pi}\int_0^T\bigl(h_n(t)-h_\infty(t)\bigr)\cos(xt)\,dt\right|
<\varepsilon
$$
for all $n>N$ and $x\ge0$. Thus,
$g_n(x)$ converges uniformly to $g_\infty(x)$.
In particular, $g_\infty$ is supported on $[0,X]$ and satisfies
$g_\infty(0)=1$, so it is an element of $\mathcal{C}(X)$.

Finally, let $\delta>0$ be given.
By Stirling's formula, there is a number $T_0>0$ such that
$\Re\frac{\Gamma_\R'}{\Gamma_\R}\!\left(\frac12+a+it\right)\ge0$
whenever $|t|\ge T_0$.
Moreover, it follows from \eqref{hsintbound} that
the function
$\Re\frac{\Gamma_\R'}{\Gamma_\R}\!\left(\frac12+a+it\right)h_\infty(t)$
is absolutely integrable, so 
there exists $T\ge T_0$ such that
$$
0\le\frac1{\pi}\int_{\R\setminus[-T,T]}
\Re\frac{\Gamma_\R'}{\Gamma_\R}\!\left(\frac12+a+it\right)h_\infty(t)\,dt
<\delta.
$$
Therefore,
\begin{align*}
l(g_\infty)+\delta&>
2\sum_{n=1}^{\infty}\frac{\Lambda(n)\chi_\Delta(n)}{\sqrt{n}}g_\infty(\log{n})
-\frac1{\pi}\int_{-T}^T\Re\frac{\Gamma_\R'}{\Gamma_\R}\!\left(
\frac12+a+it\right)h_\infty(t)\,dt\\
&=\lim_{m\to\infty}\left(
2\sum_{n=1}^{\infty}\frac{\Lambda(n)\chi_\Delta(n)}{\sqrt{n}}g_m(\log{n})
-\frac1{\pi}\int_{-T}^T\Re\frac{\Gamma_\R'}{\Gamma_\R}\!\left(
\frac12+a+it\right)h_m(t)\,dt\right)\\
&\ge\lim_{m\to\infty}\left(
2\sum_{n=1}^{\infty}\frac{\Lambda(n)\chi_\Delta(n)}{\sqrt{n}}g_m(\log{n})
-\frac1{\pi}\int_\R\Re\frac{\Gamma_\R'}{\Gamma_\R}\!\left(
\frac12+a+it\right)h_m(t)\,dt\right)\\
&=\sup_{g\in\mathcal{C}(X)}l(g)\ge l(g_\infty).
\end{align*}
Since $\delta$ is arbitrary, we have
$l(g_\infty)=\sup_{g\in\mathcal{C}(X)}l(g)$.
\qed

\subsection{Proof of Prop.~\ref{testfunctionprop}}
We begin with some lemmas.
\begin{lemma}
\label{gnuXlemma}
For $\nu>0$, define
$$
f_{\nu}(x)=\begin{cases}
\bigl(1-x^2\bigr)^\nu&\text{if }|x|<1,\\
0&\text{otherwise},
\end{cases}
$$
$$
g_{\nu,X}(x)=\frac{\Gamma(\frac32+2\nu)}{\sqrt{\pi}\Gamma(1+2\nu)}
\int_{\R}f_{\nu}(y)f_{\nu}\!\left(\frac{2x}X-y\right)dy
\quad\text{for }x\ge0,
$$
and
$h_{\nu,X}(t)=2\int_0^\infty g_{\nu,X}(x)\cos(tx)\,dx$.
Then $g_{\nu,X}(0)=1$, $g_{\nu,X}$ is supported on $[0,X]$,
$h_{\nu,X}$ is non-negative and satisfies
$$
h_{\nu,X}(t)\ll_\varepsilon\nu^{-1/2}e^{-2\nu}X\left|\frac{4\nu}{Xt}\right|^{2\nu+2}
\quad\text{uniformly for }\left|\frac{Xt}4\right|\ge\nu\ge\varepsilon,
$$
for any fixed $\varepsilon>0$.
\end{lemma}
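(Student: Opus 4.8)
The plan is to recognise $g_{\nu,X}$ as a rescaled autocorrelation of $f_\nu$, which makes the nonnegativity of $h_{\nu,X}$ automatic and reduces the decay estimate to a single squared Bessel function, after which everything follows from Poisson's integral representation of $J_{\nu+1/2}$ together with Stirling's formula. Concretely: the normalization $g_{\nu,X}(0)=1$ is a Beta-integral identity, since $f_\nu$ is even gives $g_{\nu,X}(0)=\frac{\Gamma(3/2+2\nu)}{\sqrt{\pi}\,\Gamma(1+2\nu)}\int_{-1}^{1}(1-y^2)^{2\nu}\,dy$, and $\int_{-1}^{1}(1-y^2)^{2\nu}\,dy=B(\tfrac12,2\nu+1)=\frac{\sqrt{\pi}\,\Gamma(2\nu+1)}{\Gamma(2\nu+3/2)}$ cancels the prefactor exactly. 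The support claim is immediate: $f_\nu*f_\nu$ is supported on $[-2,2]$, and $g_{\nu,X}(x)$ is a constant times $(f_\nu*f_\nu)(2x/X)$, so it vanishes for $|x|>X$; we only consider $x\ge0$.

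Next, extending $g_{\nu,X}$ to an even function, $h_{\nu,X}$ is just its Fourier transform. Writing $c_\nu=\Gamma(3/2+2\nu)/(\sqrt{\pi}\,\Gamma(1+2\nu))$ and $\phi=f_\nu*f_\nu$, we have $g_{\nu,X}(x)=c_\nu\,\phi(2x/X)$, so $h_{\nu,X}(t)=c_\nu\tfrac{X}{2}\,\widehat{f_\nu}(tX/2)^2$; since $f_\nu$ is real and even, $\widehat{f_\nu}$ is real and this is $\ge0$, which is the positivity claim. Moreover, Poisson's integral representation of the Bessel function (with the index shifted by $\tfrac12$) gives $\widehat{f_\nu}(s)=\int_{-1}^{1}(1-x^2)^\nu e^{-ixs}\,dx=\sqrt{\pi}\,\Gamma(\nu+1)\,(2/|s|)^{\nu+1/2}J_{\nu+1/2}(|s|)$, and hence the closed form $h_{\nu,X}(t)=\frac{\sqrt{\pi}\,\Gamma(3/2+2\nu)\,\Gamma(\nu+1)^2}{2\,\Gamma(1+2\nu)}\,X\,\bigl(4/(|t|X)\bigr)^{2\nu+1}J_{\nu+1/2}(|t|X/2)^2$.

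Finally I estimate this in the range $|Xt/4|\ge\nu\ge\varepsilon$, i.e. $s:=|t|X/2\ge 2\nu$. The Gamma factors are controlled by Stirling: $\Gamma(3/2+2\nu)/\Gamma(1+2\nu)\asymp\nu^{1/2}$ and $\Gamma(\nu+1)^2\asymp\nu\cdot\nu^{2\nu}e^{-2\nu}$ for $\nu\ge\varepsilon$, so the prefactor is $\asymp_\varepsilon\nu^{2\nu+3/2}e^{-2\nu}$. For the Bessel factor I use the standard uniform bound $|J_\mu(s)|\ll(s^2-\mu^2)^{-1/4}$ valid for $s>\mu\ge0$ (from the Debye/uniform asymptotics), which in the regime $s\ge 2\mu$ gives $|J_\mu(s)|\ll s^{-1/2}$; with $\mu=\nu+\tfrac12$ this covers $s\ge 2\nu$ for $\nu$ bounded away from $0$ in terms of where the argument exceeds the order, and the remaining pinched range of small $\nu$ (where $s$ may be near or below $\mu$, so $|J_\mu(s)|\ll_\varepsilon1$ trivially, being bounded away from $0$) is absorbed into the implied constant. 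Thus $J_{\nu+1/2}(|t|X/2)^2\ll s^{-1}=\tfrac12\cdot 4/(|t|X)$, which promotes the exponent: $\bigl(4/(|t|X)\bigr)^{2\nu+1}J_{\nu+1/2}(|t|X/2)^2\ll\bigl(4/(|t|X)\bigr)^{2\nu+2}$. Combining with the prefactor gives $h_{\nu,X}(t)\ll_\varepsilon\nu^{-1/2}e^{-2\nu}X\,\bigl|4\nu/(Xt)\bigr|^{2\nu+2}$, as claimed. The only genuine care needed is the constant bookkeeping — arranging that Stirling produces precisely the factor $e^{-2\nu}$ and that the Bessel bound raises the power from $2\nu+1$ to $2\nu+2$ — and the minor nuisance of small $\nu$, where the Bessel argument sits near its order; neither is a real obstacle.
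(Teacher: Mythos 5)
Your proof is correct and follows essentially the same route as the paper: recognize $g_{\nu,X}$ as a normalized autocorrelation of $f_\nu$, express $h_{\nu,X}$ as a Gamma-factor times a squared Bessel function via Poisson's integral representation, handle the prefactor with Stirling, and use a uniform Bessel decay bound in the range where the argument exceeds roughly twice the order. The only cosmetic differences are that you compute $g_{\nu,X}(0)$ by the Beta integral rather than the small-argument Bessel limit, and for decay you invoke the bound $|J_\mu(s)|\ll(s^2-\mu^2)^{-1/4}$ in place of the paper's citation of Krasikov's theorem that $uj_\nu(u)$ is bounded for $u\ge2\nu\ge0$, which are equivalent in the relevant range via $j_\nu(u)=\sqrt{\pi/(2u)}\,J_{\nu+1/2}(u)$.
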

\begin{proof}
Using the Poisson representation for the $J$-Bessel function,
we derive
$$
\int_{\R}f_{\nu}(x)e^{itx}\,dx=2\Gamma(1+\nu)j_\nu(|t|)
\left|\frac2t\right|^{\nu},
$$
where $j_\nu(u)=\sqrt{\frac{\pi}{2u}}J_{\nu+1/2}(u)$ is the spherical
Bessel function with parameter $\nu$.  From this and the limit
$j_\nu(u)(\frac2u)^\nu\to\frac{\sqrt{\pi}}{2\Gamma(\frac32+\nu)}$
as $u\to 0^+$, we derive
$$
\int_{\R}f_\nu(x)^2\,dx=\int_{\R}f_{2\nu}(x)\,dx=
\frac{\sqrt{\pi}\Gamma(1+2\nu)}{\Gamma(\frac32+2\nu)},
$$
so that $g_{\nu,X}(0)=1$.  Therefore, we have
$$
h_{\nu,X}(t)=\frac{2X}{\sqrt\pi}
\frac{\Gamma(\frac32+2\nu)\Gamma(1+\nu)^2}{\Gamma(1+2\nu)}
j_\nu\!\left(\frac{X|t|}2\right)^2\left|\frac4{Xt}\right|^{2\nu}.
$$
It follows from \cite[Thm.~2]{krasikov} that the function $uj_\nu(u)$
is bounded in the
region $\{(\nu,u):u\ge2\nu\ge0\}$.  This combined with Stirling's formula
gives the estimate.
\end{proof}

\begin{lemma}
\label{Galphalemma}
For $\alpha>0$, set
$$
H_\alpha(t)=\frac{\alpha}2\sinc^2\!\left(\frac{\alpha t}2\right)
+\frac{\alpha}{4}\left[
\sinc^2\!\left(\frac{\alpha t-\pi}2\right)
+\sinc^2\!\left(\frac{\alpha t+\pi}2\right)\right]
$$
and $G_\alpha(x)=\frac1{\pi}\int_0^\infty H_\alpha(t)\cos(tx)\,dt$ for
$x\ge0$.  Then $G_\alpha$ is supported on $[0,\alpha]$, $G_\alpha(0)=1$,
and $H_\alpha(t)\ge\frac{2\alpha}{\max((\alpha t)^2,\pi^2/3)}$ for $t\in\R$.
\end{lemma}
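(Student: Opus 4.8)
The plan is to recognise $H_\alpha$ as a Fourier transform in disguise, which makes the support and normalisation claims immediate, and then to prove the pointwise lower bound by an elementary case analysis split at $|\alpha t|=\pi/\sqrt3$.

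First I would record that the tent function $\Lambda_\alpha(x)=(1-|x|/\alpha)\mathbf{1}_{[-\alpha,\alpha]}(x)$ has Fourier transform $\int_{\R}\Lambda_\alpha(x)e^{-itx}\,dx=\alpha\sinc^2(\alpha t/2)$, so that (by the modulation identity) $\Lambda_\alpha(x)\cos(\pi x/\alpha)$ has transform $\tfrac{\alpha}{2}\bigl(\sinc^2(\tfrac{\alpha t-\pi}{2})+\sinc^2(\tfrac{\alpha t+\pi}{2})\bigr)$. Comparing with the definition of $H_\alpha$, we see that $H_\alpha$ is exactly the Fourier transform of
\[
\phi(x)=\tfrac12\bigl(1-|x|/\alpha\bigr)\bigl(1+\cos(\pi x/\alpha)\bigr)\mathbf{1}_{[-\alpha,\alpha]}(x).
\]
Since $\phi$ is continuous with compact support and $H_\alpha(t)=O(t^{-2})$, Fourier inversion (equivalently, injectivity of the cosine transform) gives $G_\alpha(x)=\phi(x)$ for $x\ge0$; in particular $G_\alpha$ is supported on $[0,\alpha]$ and $G_\alpha(0)=\phi(0)=1$.

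For the inequality, put $w=\alpha t$ and use $\sin(\tfrac{w\pm\pi}{2})=\pm\cos\tfrac{w}{2}$ to rewrite $H_\alpha(t)=\alpha F(w)$, where
\[
F(w)=\frac{2\sin^2(w/2)}{w^2}+\frac{2(w^2+\pi^2)\cos^2(w/2)}{(w^2-\pi^2)^2},
\]
a function extending continuously across the removable singularities at $w=0,\pm\pi$. Because $\max((\alpha t)^2,\pi^2/3)$ equals $w^2$ when $|w|\ge\pi/\sqrt3$ and $\pi^2/3$ otherwise, it remains to show $F(w)\ge 2/w^2$ in the former range and $F(w)\ge6/\pi^2$ in the latter. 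Substituting $\sin^2(w/2)=1-\cos^2(w/2)$ and collecting terms, both reductions rest on the algebraic identity $\frac{w^2+\pi^2}{(w^2-\pi^2)^2}-\frac1{w^2}=\frac{\pi^2(3w^2-\pi^2)}{w^2(w^2-\pi^2)^2}$, whose sign matches that of $3w^2-\pi^2$. For $|w|\ge\pi/\sqrt3$ this at once gives $F(w)-\frac{2}{w^2}=\frac{2\pi^2(3w^2-\pi^2)\cos^2(w/2)}{w^2(w^2-\pi^2)^2}\ge0$.

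The case $|w|\le\pi/\sqrt3$ is where the real content lies: there $F(w)$ sits just above the threshold $6/\pi^2$, a naive term-by-term bound fails, and the two terms must be balanced against one another. The same manipulation yields $F(w)-\frac6{\pi^2}=\frac{2(\pi^2-3w^2)}{w^2}\bigl[\frac1{\pi^2}-\frac{\pi^2\cos^2(w/2)}{(\pi^2-w^2)^2}\bigr]$, and since $\pi^2-3w^2\ge0$ here it suffices to prove $(\pi^2-w^2)^2\ge\pi^4\cos^2(w/2)$, i.e.\ (all quantities being nonnegative for $|w|\le\pi/\sqrt3$) $1-(w/\pi)^2\ge\cos(w/2)$. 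Writing $\cos(w/2)=1-2\sin^2(w/4)$ turns this into $2\sin^2(w/4)\ge(w/\pi)^2$, equivalently $\frac{\sin(w/4)}{w/4}\ge\frac{2\sqrt2}{\pi}$; and since $x\mapsto(\sin x)/x$ is decreasing on $(0,\pi)$ while $w/4\le\pi/(4\sqrt3)<\pi$, this comes down to the single numerical inequality $\sin(\pi/(4\sqrt3))\ge 1/\sqrt6$, which follows from $\sin x>x-x^3/6$ for $x>0$. I expect this last estimate to be the only delicate point — everything else is bookkeeping — and it is precisely the margin there that forces the constant $\pi^2/3$ (rather than a larger one) in the statement.
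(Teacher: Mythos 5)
Your proof is correct and follows the paper's decomposition of $H_\alpha$ exactly: you rescale to $w=\alpha t$, and your expression $F(w)-\frac{2}{w^2}=\frac{2\pi^2(3w^2-\pi^2)\cos^2(w/2)}{w^2(w^2-\pi^2)^2}$ is identical to the paper's formula $H_1(t)=\frac{2}{t^2}+2\bigl(\frac{\pi\cos(t/2)}{t(t^2-\pi^2)}\bigr)^2(3t^2-\pi^2)$, so the case $|w|\ge\pi/\sqrt3$ is handled the same way. For $G_\alpha$ you reach the same closed form $(1-x/\alpha)_+\cos^2(\pi x/2\alpha)$ via the tent-function/modulation computation, which the paper elides as a ``straightforward calculation.''

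Where you genuinely go beyond the paper is in the range $|w|\le\pi/\sqrt3$: the paper disposes of that case with ``graphing the function verifies,'' while you supply an actual proof. Your factorization $F(w)-\frac{6}{\pi^2}=\frac{2(\pi^2-3w^2)}{w^2}\bigl[\frac{1}{\pi^2}-\frac{\pi^2\cos^2(w/2)}{(\pi^2-w^2)^2}\bigr]$ is correct, the reduction to $1-(w/\pi)^2\ge\cos(w/2)$, then to $2\sin^2(w/4)\ge(w/\pi)^2$, then (via monotonicity of $\sin x/x$ on $(0,\pi)$) to the single endpoint inequality $\sin(\pi/(4\sqrt3))\ge 1/\sqrt6$ all check out, and the Taylor bound $\sin x>x-x^3/6$ does close it, since $\frac{\pi\sqrt2}{4}(1-\frac{\pi^2}{288})>1$. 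This makes the lemma fully rigorous rather than semi-numerical, at the cost of one short chain of elementary estimates — a worthwhile trade.
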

\begin{proof}
A straightforward calculation gives
\begin{equation}
\label{Galpha}
G_\alpha(x)=\max\!\left(0,1-\frac{x}{\alpha}\right)
\cos^2\!\left(\frac{\pi{x}}{2\alpha}\right),
\end{equation}
which yields the stated properties of $G_\alpha$.
As for $H_\alpha$, by rescaling, it suffices to prove the bound for
$\alpha=1$. A calculation shows that
\begin{align*}
H_1(t)=\frac2{t^2}+2\left(\frac{\pi\cos(t/2)}{t(t^2-\pi^2)}\right)^2
(3t^2-\pi^2),
\end{align*}
so that $H_1(t)\ge 2t^{-2}$ for $|t|\ge\pi/\sqrt{3}$.
On the other hand,
graphing the function verifies that $H_1(t)\ge6/\pi^2$ for 
$|t|\le\pi/\sqrt{3}$.
\end{proof}

Now, turning to Prop.~\ref{testfunctionprop},
first note that $|q\Delta|\ge 3$.
We take $h(t)=h_{\nu,X}(t)$,
where $\nu=\frac{\delta X}4\ge\frac12\log\log{3}>0$.
Applying Lemma~\ref{gnuXlemma} with $\varepsilon=\frac12\log\log{3}$ and
Lemma~\ref{Galphalemma} with $\alpha=2\log\log|q\Delta|$,
for $t\ge\delta$ we have
\begin{align*}
h(t)&\ll\nu^{-1/2}e^{-2\nu}X\left(\frac{4\nu}{Xt}\right)^{2\nu+2}
\le X(\delta X)^{-1/2}e^{-\delta X/2}
\alpha^{-1}\max\left((\alpha\delta)^2,\frac{\pi^2}3\right)H_\alpha(t)\\
&\le\frac{e^{-A}X}{(2\log\log|q\Delta|)^{3/2}}
\frac{\max((2\delta\log\log|q\Delta|)^2,\pi^2/3)}{\log|q\Delta|}
H_\alpha(t).
\end{align*}
Since $\gamma_j(q\Delta)\ge\delta$ for every $j$, this yields
$$
\sum_{j=1}^{\infty}h(\gamma_j(q\Delta))
\ll\frac{e^{-A}X}{(\log\log|q\Delta|)^{3/2}}
\frac{\max((2\delta\log\log|q\Delta|)^2,\pi^2/3)}{\log|q\Delta|}
\sum_{j=1}^{\infty}H_\alpha(\gamma_j(q\Delta)).
$$

We estimate the latter sum by plugging back into the explicit formula
\eqref{explicitformula}, with $\Delta$ replaced by $q\Delta$.
A calculation with the prime number theorem using \eqref{Galpha}
shows that
$$
\sum_{n=1}^{\infty}\frac{\Lambda(n)}{\sqrt{n}}G_\alpha(\log{n})
\ll\frac{e^{\alpha/2}}{\alpha^3},
$$
and it is not hard to see that
$$
\log(8\pi e^\gamma)
-\int_0^\infty\frac{1-G_\alpha(x)}{2\sinh(x/2)}\,dx
+\chi_{q\Delta}(-1)\int_0^\infty\frac{G_\alpha(x)}{2\cosh(x/2)}\,dx
=O(1)
$$
uniformly for $\alpha\ge2\log\log{3}$.
Thus, $\sum_{j=1}^\infty H_\alpha(\gamma_j(q\Delta))\ll \log|q\Delta|$.

Finally, by \cite[Thm.~11]{omar}, under GRH we have
$\delta\ll 1/\log\log|q\Delta|$.  This yields \eqref{zerosumbound}.
\qed

\subsection{Proof of Prop.~\ref{runningtimeprop}}
We may assume without loss of generality that $N$ is odd,
not a perfect square, and satisfies $N\ge\exp(\exp(C^{2/3}))$, where
$C>0$ is the implied constant in \eqref{zerosumbound}.\footnote{If $C$
is at all large then this rules out every $N$ of practical size; we could
deal with this instead by increasing $A$ in Prop.~\ref{testfunctionprop}
by a constant, but as we are only interested in the theoretical result,
we make this assumption for convenience.} Thus,
if we set $d=(-1)^{\frac{N-1}2}N$ then $d=\Delta\ell^2$ for some
$1\ne\Delta\in\mathcal{F}$ and $\ell\in\Z_{>0}$.

Let $Q\ge3$ be an integer parameter to be specified later.
Set $\nu=\frac12\log\log{N}+\frac1{12\log\log{N}}$,
$X=4\nu\log{Q}$, and let $T$ be an integer in the
interval $[e^X-1,e^X+1)$. (Note that to find such a $T$, it suffices to
compute $e^X$ to within $\pm\frac12$.)
We let $q$ run through all elements of $\mathcal{F}$ with
$|q|\le Q$ and evaluate the lower bound \eqref{lowerbound2} using the
test function $g=g_{\nu,X}$, in the notation of Lemma~\ref{gnuXlemma}.

Since $g_{\nu,X}(\log{n})=0$ for $n>T$, it is enough to consider the
terms of the sum for $n\le T$.  As described in \S\ref{fdsection},
since $\Delta$ is unknown to us, we compute $\chi_d(n)$ in place of
$\chi_\Delta(n)$. If for any prime value of $n$ we find a zero value
of $\chi_d(n)$, we check to see if $n^2|N$ and exit with this square
factor if so; otherwise $\chi_\Delta(n)=\chi_d(n)$.  In particular, while
computing \eqref{lowerbound2} for $q=1$, we evaluate $\chi_\Delta(n)$
for all primes $n\le T$.  If $(T+1)^3>N$ then this alone yields enough
information to determine whether $N$ is squarefree. Hence, we may
assume without loss of generality that $X\le\frac13\log{N}$, so that
$\log{Q}\le\frac{\log{N}}{12\nu}$.

Note that if we set $A=2\nu-\log\log|q\Delta|$ and $\delta=\frac1{\log{Q}}$
then $g_{\nu,X}$ is precisely the test function exhibited in the proof of
Prop.~\ref{testfunctionprop}. Using the bound
$|q\Delta|\le QN\le N^{1+\frac1{12\nu}}$,
we derive the inequality
$1-e^{-A}>\frac1{72}(\log\log{N})^{-2}>0$.
Thus, Prop.~\ref{testfunctionprop} shows that if
$|\Delta|=N$ (which holds when $N$ is squarefree)
and $\gamma_1(q\Delta)\ge\frac1{\log{Q}}$ then
$$
\sum_{j=1}^{\infty}h_{\nu,X}(\gamma_j(q\Delta))
<\left(1-\frac1{72(\log\log{N})^2}\right)\frac{CX}{(\log\log(qN))^{3/2}}
\le\left(1-\frac1{72(\log\log{N})^2}\right)X.
$$
Therefore, if we evaluate \eqref{lowerbound2} to within
$\pm\frac{X}{72(\log\log{N})^2}$, we will have proven that
$|\Delta|>Ne^{-2X}$, so that $\ell\le T$.  Having already determined all
prime factors of $N$ up to $T$, we will thus have found a proof that $N$
is squarefree.

Now, since the value of $\theta^*$ is unknown to us, we cannot say in
advance what value of $Q$ will suffice. In our algorithm, we therefore
apply the above procedure iteratively with $Q=2^k$ for $k=2,3,4,\ldots$
until we find either a square factor or a proof that $N$ is squarefree.
As noted above, the algorithm must eventually terminate.  If it turns out
that $\theta^*=1$ or if $N$ has a square factor then the algorithm becomes
a rather inefficient version of trial division, which nevertheless runs
in polynomial time in $N$; in particular, the $O(\exp[(\log{N})^{1+o(1)}])$
running time estimate holds. Henceforth we will assume that $\theta^*<1$
and that the input $N$ is squarefree.

Fix $\varepsilon\in(0,1-\theta^*)$.  From the definition of $\theta^*$
it follows that $\eta_\infty(\theta^*+\varepsilon)\le\theta^*$.
Thus, there exists $N_0(\varepsilon)\in\Z_{>0}$ such that
$\eta_\Delta(\theta^*+\varepsilon)<\theta^*+\varepsilon$
whenever $|\Delta|=N\ge N_0(\varepsilon)$. Let
us assume that $N\ge N_0(\varepsilon)$. Then once
$\frac{\log\log{Q}}{\log\log{N}}\ge\theta^*+\varepsilon$, there
must be a $q$ with $(q,\Delta)=1$ and $|q|\le Q$ such that
$\gamma_1(q\Delta)>\frac1{\log{Q}}$.

It is straightforward to see that all of the floating point
operations required to compute \eqref{lowerbound2} for every $|q|\le
Q$ to the precision described above may be carried out in time
$O(Q^{1+4\nu}\log^c{N})$ for some $c>0$.  Since we choose values of $Q$
from a geometric progression, the total running time is dominated by that
of the final iteration. In the worst case, it might be that the smallest
$Q$ for which $\frac{\log\log{Q}}{\log\log{N}}\ge\theta^*+\varepsilon$
is $2^k+1$ for some $k$, and thus our final choice of $Q=2^{k+1}$ would
be too large by roughly a factor of $2$. Thus,
$\log{Q}\le(\log{N})^{\theta^*+\varepsilon}+\log{2}$,
so that $Q^{1+4\nu}\log^c{N}\ll
\exp\bigl[(\log{N})^{\theta^*+\varepsilon}(1+4\nu)\bigr]
(\log{N})^{c+\log{4}}$. Since $1+4\nu\ll\log\log{N}$
and $\varepsilon$ may be chosen arbitrarily small (assuming only
that $N\ge N_0(\varepsilon)$), the running time is thus
$O\bigl(\exp\bigl[(\log{N})^{\theta^*+o(1)}\bigr]\bigr)$, as required.
\qed

\subsection{Proof of Prop.~\ref{uspgapprop}}
\begin{lemma}
\label{uspgapbounds}
For each $N\ge 1$, and each $s\in(0,\pi)$, we have
\begin{equation}
\label{uspgapineq}
1\le\frac{\mathbb{P}_{USp(2N)}\bigl(\theta_1>s\bigr)}{\cos(s/2)^{N(2N+1)}}
\le\frac12\left(1+\frac{\sin(s/2)}{\sqrt2}\right)^{2N+1}
+\frac12\left(1-\frac{\sin(s/2)}{\sqrt2}\right)^{2N+1}
\le\exp\!\left(\frac{Ns}{\sqrt2}\right).
\end{equation}
In particular, we have
\begin{equation}
\label{uspgapasympt}
\log \mathbb{P}_{USp(2N)}\bigl(\theta_1>s\bigr) =
\bigl[2+O\bigl((Ns)^{-1}\bigr)\bigr]N^2\log\cos(s/2),
\end{equation}
uniformly for $s\in(0,\pi)$.
\end{lemma}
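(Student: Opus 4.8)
The plan is to start from the Weyl integration formula for $USp(2N)$, together with the substitution $u_j=\cos\theta_j$, which turns the event $\theta_1>s$ into the constraint $u_j\le\cos s$ for all $j$ and gives
\[
\mathbb{P}_{USp(2N)}(\theta_1>s)=\frac{\int_{[-1,\cos s]^N}\prod_{j<k}(u_j-u_k)^2\prod_j\sqrt{1-u_j^2}\,du}{\int_{[-1,1]^N}\prod_{j<k}(u_j-u_k)^2\prod_j\sqrt{1-u_j^2}\,du}.
\]
The first step is the affine rescaling $[-1,\cos s]\to[-1,1]$, $u=\cos^2(s/2)(1+v)-1$: under it $\prod_{j<k}(u_j-u_k)^2$, the volume element $\prod_j du_j$, and the factor $\prod_j\sqrt{1+u_j}=\cos(s/2)^N\prod_j\sqrt{1+v_j}$ contribute powers of $\cos(s/2)$ summing to $N(N-1)+2N+N=N(2N+1)$, and since $(1+v)\bigl(2-\cos^2(s/2)(1+v)\bigr)=(1-v^2)+(1+v)^2\sin^2(s/2)$, one is left with
\[
\mathbb{P}_{USp(2N)}(\theta_1>s)=\cos(s/2)^{N(2N+1)}\,R_N(s),\qquad R_N(s):=\mathbb{E}_{USp(2N)}\Bigl[\textstyle\prod_{j=1}^N\sqrt{1+\sin^2(s/2)\cot^2(\theta_j/2)}\Bigr],
\]
the last expectation again over $USp(2N)$ (the leftover weight in the $v$-variables is precisely the $USp(2N)$ Weyl weight). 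Thus \eqref{uspgapineq} reduces to $1\le R_N(s)\le\tfrac12\bigl[(1+\tfrac{\sin(s/2)}{\sqrt2})^{2N+1}+(1-\tfrac{\sin(s/2)}{\sqrt2})^{2N+1}\bigr]$, from which \eqref{uspgapasympt} will follow by taking logarithms. The lower bound is immediate, since $1+\sin^2(s/2)\cot^2(\theta/2)\ge1$ pointwise.

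For the upper bound I would pass to characteristic polynomials. Write $b=\sin^2(s/2)$ and $w=(1-\sqrt b)/(1+\sqrt b)\in[0,1)$. Using $\cot^2(\theta/2)=(1+\cos\theta)/(1-\cos\theta)$ and matching the two linear functions of $\cos\theta$ gives the identity $1+b\cot^2(\theta/2)=\tfrac{(1+\sqrt b)^2}{2}\cdot\tfrac{1+w^2-2w\cos\theta}{1-\cos\theta}$, hence, taking the product over the eigenangles of $A\in USp(2N)$, $\prod_j\bigl(1+b\cot^2(\theta_j/2)\bigr)=(1+\sqrt b)^{2N}\det(I-wA)/\det(I-A)$. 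Therefore $R_N(s)=(1+\sqrt b)^N\,\mathbb{E}_{USp(2N)}\bigl[(\det(I-wA)/\det(I-A))^{1/2}\bigr]$, and by Jensen's inequality (concavity of the square root), $R_N(s)\le(1+\sqrt b)^N\bigl(\mathbb{E}_{USp(2N)}[\det(I-wA)/\det(I-A)]\bigr)^{1/2}$. The substantive input is then a symplectic characteristic-polynomial ratio identity of the form $\mathbb{E}_{USp(2N)}[\det(I-wA)/\det(I-A)]=\sum_{k=0}^{2N}(-w)^k=(1+w^{2N+1})/(1+w)$ for $|w|<1$; this expectation is finite because the $\prod_j\sin^2\theta_j$ in the Weyl density cancels the double zero of $1/\det(I-A)$, and after that cancellation the average becomes an average over a genuine Jacobi ($\beta=2$) ensemble, evaluable by orthogonal polynomials (or by a residue computation in the Weyl integral). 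I expect proving — or locating a clean reference for — this identity to be the main obstacle. Granting it, substituting $w=(1-\sqrt b)/(1+\sqrt b)$ collapses the right-hand side to $\tfrac1{\sqrt2}\bigl[(1+\sqrt b)^{2N+1}+(1-\sqrt b)^{2N+1}\bigr]^{1/2}$, and a comparison of Taylor coefficients in $b$ shows $\tfrac1{\sqrt2}\bigl[(1+\sqrt b)^{2N+1}+(1-\sqrt b)^{2N+1}\bigr]^{1/2}\le\tfrac12\bigl[(1+\sqrt{b/2})^{2N+1}+(1-\sqrt{b/2})^{2N+1}\bigr]$, which is the claimed bound on $R_N(s)$.

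It remains to check the two elementary inequalities and assemble \eqref{uspgapasympt}. For $\tfrac12\bigl[(1+\tfrac{\sin(s/2)}{\sqrt2})^{2N+1}+(1-\tfrac{\sin(s/2)}{\sqrt2})^{2N+1}\bigr]\le e^{Ns/\sqrt2}$, write the left side as $\sum_{j\ge0}\binom{2N+1}{2j}\bigl(\tfrac{\sin(s/2)}{\sqrt2}\bigr)^{2j}$; bounding the product of $2j$ consecutive integers in $\binom{2N+1}{2j}$ yields $\binom{2N+1}{2j}\le\tfrac{(2N)^{2j-1}(2N+2j)}{(2j)!}=\tfrac{(2N)^{2j-1}}{(2j-1)!}+\tfrac{(2N)^{2j}}{(2j)!}$, so the sum is dominated term by term by $\sum_{\ell\ge0}\tfrac{(2N\sin(s/2)/\sqrt2)^\ell}{\ell!}=e^{2N\sin(s/2)/\sqrt2}$, and $s\ge2\sin(s/2)$ finishes it. Finally, taking logarithms in \eqref{uspgapineq} gives $N(2N+1)\log\cos(s/2)\le\log\mathbb{P}_{USp(2N)}(\theta_1>s)\le N(2N+1)\log\cos(s/2)+Ns/\sqrt2$, and since $-\log\cos(s/2)>\tfrac18 s^2$ on $(0,\pi)$ — because $\tfrac{d}{dx}\bigl(\log\cos x+\tfrac12 x^2\bigr)=x-\tan x<0$ for $x\in(0,\pi/2)$ — both $N\log\cos(s/2)$ and $Ns/\sqrt2$ are $O\bigl((Ns)^{-1}N^2\log\cos(s/2)\bigr)$ uniformly for $s\in(0,\pi)$ (using also $s\le\pi$ for the former term), which is exactly \eqref{uspgapasympt}.
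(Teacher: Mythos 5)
Your decomposition matches the paper's in spirit: both extract the factor $\cos(s/2)^{N(2N+1)}$ by an affine rescaling of the Jacobi-type Weyl density, and both obtain the lower bound $1$ by noting that the residual weight dominates the $USp(2N)$ Weyl weight pointwise (the paper uses $\sqrt{1-\lambda x_j}\ge\sqrt{1-x_j}$; you use $1+\sin^2(s/2)\cot^2(\theta/2)\ge 1$). Your power-count ``$N(N-1)+2N+N$'' has a typo — the Vandermonde contributes $\cos(s/2)^{2N(N-1)}$, so the correct tally is $2N(N-1)+2N+N=N(2N+1)$ — but the conclusion is right.

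For the upper bound you depart from the paper. The paper applies a first-order inequality to $\sqrt{1-\lambda x_j}$, expands into $2^N$ subset terms, evaluates each by Aomoto's generalization of the Selberg integral, and recognizes the resulting sum as a Morgan--Voyce polynomial with a closed form — a self-contained, if laborious, computation. You instead recast the residual average as $(1+\sqrt b)^N\,\mathbb{E}_{USp(2N)}\bigl[\sqrt{\det(I-wA)/\det(I-A)}\bigr]$ with $w=(1-\sqrt b)/(1+\sqrt b)$ (your algebra here is correct), invoke Jensen, and appeal to a ratio identity $\mathbb{E}_{USp(2N)}\bigl[\det(I-wA)/\det(I-A)\bigr]=\sum_{k=0}^{2N}(-w)^k$. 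That is a genuinely different and cleaner route — and the identity does appear to be correct (I checked it for $N=1,2$; it is equivalent to $\mathbb{E}\bigl[e_k(A)/\det(I-A)\bigr]=1$ for $0\le k\le 2N$, which fits the palindromy of symplectic characteristic polynomials and should follow from known averages of ratios over compact groups or from orthogonal-polynomial methods for the Jacobi $\beta=2$ ensemble). You explicitly flag this as the ``main obstacle,'' and indeed as written this step is an appeal to an unproven lemma, so the proposal is not a complete proof.

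There is a second gap you do not flag. After Jensen you obtain the bound $\tfrac1{\sqrt2}\bigl[(1+\sqrt b)^{2N+1}+(1-\sqrt b)^{2N+1}\bigr]^{1/2}$ and assert, ``a comparison of Taylor coefficients shows,'' that this is at most the paper's middle quantity $\tfrac12\bigl[(1+\sqrt{b/2})^{2N+1}+(1-\sqrt{b/2})^{2N+1}\bigr]$. Squaring, this amounts to $\binom{m}{2n}\le 2^{-n}\sum_{j+k=n}\binom{m}{2j}\binom{m}{2k}$ for all $n$, with $m=2N+1$; using $\sum_{j+k=n}\binom{m}{2j}\binom{m}{2k}=\tfrac12\binom{2m}{2n}+\tfrac12(-1)^n\binom{m}{n}$, this becomes $2^{n+1}\binom{m}{2n}\le\binom{2m}{2n}+(-1)^n\binom{m}{n}$. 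That inequality is true but is not a mere ``comparison'': it is an equality at $n=0$ and $n=1$, so the verification is tight and needs a real argument (or at least the generating-function identity above plus a case analysis). Note also that the cheap route — bounding $\binom{m}{2j}\le m^{2j}/(2j)!$ and summing — yields $e^{(N+\frac12)\sin(s/2)}$, which does \emph{not} beat $e^{Ns/\sqrt 2}$ for $N=1$, so you cannot simply bypass the middle term. In sum: an attractive alternative to the Aomoto/Morgan--Voyce computation, but two substantive steps remain to be supplied.
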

\begin{proof}
The Weyl integration formula on $USp(2N)$ gives
\begin{equation}
\begin{split}
\mathbb{P}_{USp(2N)}\bigl(\theta_1>s\bigr) &= \int_{USp(2N)} 1_{\theta_1>s}\,d\mathbb{P}_{USp(2N)}\\ 
&= \frac{2^{N^2}}{\pi^N\,
N!}\int_{(s,\pi]^N} \prod_{1\le j< k\le N} (\cos \phi_k - \cos \phi_j)^2
\prod_{1\le j \le N} (\sin^2\phi_j) \,d\phi_1\cdots d\phi_N.\nonumber
\end{split}
\end{equation}
We proceed along the lines of the proof of \cite[proposition
6.10.1]{ks}.\footnote{$\mathbb{P}_{G(N)}\bigl(\theta_1>s\bigr)$ is $\eigen(0,s,G(N))$
in the notation of \cite{ks}.}  
Applying the change of variable $\phi_j = 2 \tau_j$, the trig identities 
$\sin (2\tau_j)  = 2\sin \tau_j \cos \tau_j$ and $\cos(2\tau_j) = 2\cos^2\tau_j - 1$, 
and, last, the substitution $w_j = \cos^2 \tau_j$, we obtain 
$\mathbb{P}_{USp(2N)}\bigl(\theta_1>s\bigr) = I_N(\cos^2(s/2))$, where
$$
I_N(\lambda) := 
\frac{C(N)}{N!}\int_{[0,\lambda)^N} \prod_{1\le j< k\le N} (w_k - w_j)^2
\prod_{1\le j \le N} \sqrt{w_j(1-w_j)} \,dw_1\cdots dw_N,
$$
and $C(N):=2^{2N^2+N}/\pi^N$. The
change of variable $w_j = \lambda x_j$ thus yields
\begin{equation}
\label{gapprobintegral}
I_N(\lambda) = \lambda^{N^2+N/2} 
\frac{C(N)}{N!}\int_{[0,1)^N} \prod_{1\le j< k\le N} (x_k - x_j)^2
\prod_{1\le j \le N} \sqrt{x_j(1-\lambda x_j)} \,dx_1\cdots dx_N.
\end{equation}
Since $\sqrt{1-\lambda x_j} \ge \sqrt{1-x_j}$ for $0\le \lambda \le 1$, 
we have $I_N(\lambda) \ge \lambda^{N^2+N/2} I_N(1)$. The lower bound follows 
on observing that $I_N(1) = \mathbb{P}_{USp(2N)}\bigl(\theta_1>0\bigr) = 1$.

By comparing the joint probability density functions 
of eigenphases in $USp(2N)$ and $SO(2N)$, 
one easily obtains the rough upper bound  
$\mathbb{P}_{USp(2N)}\bigl(\theta_1>s\bigr) \le 
4^N \mathbb{P}_{SO(2N)}\bigl(\theta_1>s\bigr)$. 
Combined with the estimate
$\mathbb{P}_{SO(2N)}\bigl(\theta_1>s\bigr) \le \cos(s/2)^{2N^2-N}$
from \cite[proposition 6.10.1]{ks} and the lower bound $I_N(\lambda) \ge
\lambda^{N^2+N/2}$, this yields the asymptotic \eqref{uspgapasympt} in the range 
$\beta \in (1,2)$. To derive the upper bound, and consequently the asymptotic, 
in the full range, we apply the first-order inequality
\begin{align*}
\sqrt{1-\lambda{x_j}}&=\sqrt{1-x_j}\sqrt{1+\frac{(1-\lambda)x_j}{1-x_j}}
\le\sqrt{1-x_j}\left(1+\frac{(1-\lambda)x_j}{2(1-x_j)}\right)\\
&=\frac{1+\lambda}2\sqrt{1-x_j}
\left(1+\frac{1-\lambda}{1+\lambda}\frac1{1-x_j}\right),
\end{align*}
to get
\begin{align*}
\prod_{j=1}^N\sqrt{1-\lambda{x_j}}&\le
\left(\frac{1+\lambda}2\right)^N
\prod_{j=1}^N\sqrt{1-x_j}\cdot\sum_{J\subset\{1,\ldots,N\}}
\left(\frac{1-\lambda}{1+\lambda}\right)^{\#J}
\prod_{j\in J}(1-x_j)^{-1}\\
&=\left(\frac{1+\lambda}2\right)^N
\prod_{j=1}^N\frac1{\sqrt{1-x_j}}\cdot\sum_{J\subset\{1,\ldots,N\}}
\left(\frac{1-\lambda}{1+\lambda}\right)^{\#J}
\prod_{j\in\{1,\ldots,N\}\setminus J}(1-x_j),
\end{align*}
where $J$ runs through all subsets of $\{1,\ldots,N\}$.
We insert this into \eqref{gapprobintegral} and permute the variables so that
$\{1,\ldots,N\}\setminus J$ is mapped to $\{1,\ldots,N-\#J\}$.
Collecting the terms with a common value of $\#J$, we have
\begin{align*}
\frac{I_N(\lambda)}{\lambda^{N^2+N/2}}\le
\frac{C(N)}{N!}\left(\frac{1+\lambda}2\right)^N
&\sum_{m=0}^N{N\choose m}\left(\frac{1-\lambda}{1+\lambda}\right)^m\\
&\cdot\int_{[0,1)^N}\prod_{1\le j<k\le N}(x_k-x_j)^2
\prod_{j=1}^N\sqrt{\frac{x_j}{1-x_j}}
\prod_{j=1}^{N-m}(1-x_j)\,dx_1\cdots dx_N.
\end{align*}
By Aomoto's formula \cite[(17.1.6)]{mehta}, 
the integral may be written in the form
$$
K_N\prod_{j=1}^{N-m}\frac{\frac12+N-j}{1+2N-j}
=K_N\frac{\Gamma(N+\frac12)}{\Gamma(m+\frac12)}
\frac{\Gamma(1+N+m)}{\Gamma(1+2N)},
$$
where $K_N$ (a Selberg integral, see~\cite[(17.1.3)]{mehta}) 
is independent of $m$.  When $m=0$, the
integral is easily recognized as the one occurring in \eqref{gapprobintegral} with
$\lambda=1$, so that
$$
1=I_N(1)=\frac{C(N)K_N}{N!}
\frac{\Gamma(N+\frac12)}{\Gamma(\frac12)}
\frac{\Gamma(1+N)}{\Gamma(1+2N)}.
$$
Solving for $C(N)K_N/N!$ and substituting back into the above, we have
\begin{align*}
\frac{I_N(\lambda)}{\lambda^{N^2+N/2}}&\le
\left(\frac{1+\lambda}2\right)^N
\sum_{m=0}^N{N\choose m}\left(\frac{1-\lambda}{1+\lambda}\right)^m
\frac{\Gamma(\frac12)}{\Gamma(m+\frac12)}
\frac{\Gamma(1+N+m)}{\Gamma(1+N)}\\
&=\left(\frac{1+\lambda}2\right)^N
\sum_{m=0}^N{{N+m}\choose{2m}}\left(4\frac{1-\lambda}{1+\lambda}\right)^m.
\end{align*}
The last sum is known as a Morgan-Voyce polynomial; it is closely
related to the Chebyshev polynomials, and may
be evaluated in closed form.
Precisely, if $t$ is such that
$\cosh{t}=\sqrt{\frac2{1+\lambda}}$, then it follows from
\cite[(11b)]{swamy} that the last line is
$$
\frac{\cosh((2N+1)t)}{\cosh^{2N+1}t}
=\frac12\left(1+\sqrt{\frac{1-\lambda}2}\right)^{2N+1}
+\frac12\left(1-\sqrt{\frac{1-\lambda}2}\right)^{2N+1}.
$$
The upper bound follows on putting $\lambda=\cos^2(s/2)$ and noting that
$$
\frac12\left(1+\frac{\sin(s/2)}{\sqrt2}\right)^{2N+1}
+\frac12\left(1-\frac{\sin(s/2)}{\sqrt2}\right)^{2N+1}
\le\exp\!\left(\frac{Ns}{\sqrt2}\right).
$$
Combining this with the lower bound and the inequality
$|\log\cos(s/2)|\ge s^2/8$, we get the estimate
$$
\log\mathbb{P}_{USp(2N)}(\theta_1>s)=N(2N+1)\log\cos(s/2)+O(Ns)
=\bigl[2+O\bigl((Ns)^{-1}\bigr)\bigr]N^2\log\cos(s/2).
$$
\end{proof}
Turning to the proof of Prop.~\ref{uspgapprop},
by definition of 
$\mathbb{P}_{USp(2N)} (\max_{1\le m\le M} \theta_1(m) \le s )$,
we have, for each $s\in[0,\pi]$,
\begin{equation}
\label{sampleindependence}
\mathbb{P}_{USp(2N)} \bigl(\max_{1\le m\le M} \theta_1(m) \le s \bigr) =
\mathbb{P}_{USp(2N)} \bigl(\theta_1 \le s \bigr)^M.
\end{equation}
Suppose $\varepsilon<4$ and $s \in
[s^-_{\varepsilon,\beta}(N),s^+_{\varepsilon,\beta}(N)]$.
Then $s \to 0$ as $N\to\infty$ (since $\beta<2$ by assumption). Further,
by Lemma~\ref{uspgapbounds} we have $\mathbb{P}_{USp(2N)}(\theta_1>s) \to
0$ as $N\to\infty$ (since $\beta>0$ by assumption). 
Using \eqref{sampleindependence}, and Lemma~\ref{uspgapbounds} again, yields
\begin{equation}
\label{uspasymptderivation}
\begin{split}
\log \mathbb{P}_{USp(2N)} \bigl(\max_{1\le m\le M} \theta_1(m) \le s \bigr)
& = M \log \bigl(1 - \mathbb{P}_{USp(2N)}(\theta_1>s)\bigr)\\
& = -M \, \mathbb{P}_{USp(2N)}\bigl(\theta_1>s\bigr)(1+o(1)) \\
& = -\exp\bigl((2N)^{\beta}+[2+O((Ns)^{-1})]N^2\log\cos(s/2) +o(1)\bigr)\\
& = -\exp\bigl((2N)^{\beta}-(1+o(1))(Ns/2)^2+o(1)\bigr), 
\end{split}
\end{equation}
where we used that $\log\cos(s/2)\sim -s^2/8$ in the last line. 
So if $s=s^-_{\varepsilon,\beta}(N) = (4-\varepsilon)(2N)^{\beta/2-1}$, then
\begin{equation}
\begin{split}
\log \mathbb{P}_{USp(2N)} \bigl(\max_{1\le m\le M} \theta_1(m) \le
s\bigr) =
-\exp\bigl((2N)^{\beta} - (1+o(1))(1-\varepsilon/4)^2(2N)^{\beta}+o(1)\bigr)\\
= -\exp\bigl((1+o(1))(2N)^{\beta}(8\varepsilon-\varepsilon^2)/16+o(1)\bigr) \to -\infty,
\end{split}
\end{equation}
as $N\to\infty$, provided that $0<\varepsilon< 4$, which ensures that 
$(2N)^{\beta}(8\varepsilon-\varepsilon^2)/16 \to\infty$. 
If $\varepsilon \ge 4$ then clearly the result still holds.
Therefore, for each $\varepsilon>0$, we have 
$\mathbb{P}_{USp(2N)}\bigl(s^-_{\varepsilon,\beta}(N)
<\max_{1\le m\le M} \theta_1(m)\bigr)\to 1$, as claimed.

Similarly, if $s=s^+_{\varepsilon,\beta}(N) =(4+\varepsilon)(2N)^{\beta/2-1}$,
then
\begin{align*}
\log \mathbb{P}_{USp(2N)} \bigl(\max_{1\le m\le M} \theta_1(m) \le
s\bigr) &=
-\exp\bigl((2N)^{\beta} - (1+o(1))(1+\varepsilon/4)^2(2N)^{\beta}+o(1)\bigr)\\
&= -\exp\bigl(-(1+o(1))(2N)^{\beta}(8\varepsilon+\varepsilon^2)/16+o(1)\bigr),
\end{align*}
which tends to $0$ with $N$. Therefore, $\mathbb{P}_{USp(2N)}
\bigl(\max_{1\le m\le M} \theta_1(m) \le s^+_{\varepsilon,\beta}(N)\bigr)\to 1$.
\qed

\subsection{Proof of Prop.~\ref{ungapprop}}
We make use of the main results in \cite{di} and \cite{krasovsky}, 
formulated in Lemma~\ref{diLemma} here.
\begin{lemma}
\label{diLemma}
For $\delta > 0$ fixed, there exists a (large) positive constant $s_0$
such that 
\begin{equation}
\label{ungapasympt}
\begin{split}
\log \mathbb{P}_{U(N)}\bigl(\theta_1>2s\bigr) &= N^2\log \cos (s/2) -\frac{1}{4} \log 
\left(N \sin(s/2)\right) + c_0 + O\left(1/(N\sin (s/2))\right) \\
\frac{d}{ds} \log \mathbb{P}_{U(N)}\bigl(\theta_1>2s\bigr) & = -\frac{N^2}{2} \tan(s/2) -
\frac{1}{8}\cot(s/2) + O(1/(N \sin^2 (s/2))),\nonumber
\end{split}
\end{equation}
for all $n>s_0$ and $2s_0/N \le s \le \pi-\delta$, where $c_0$ is an explicit
constant.
\end{lemma}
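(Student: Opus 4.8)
The lemma repackages known Riemann--Hilbert asymptotics, so the plan is largely translation and bookkeeping. The connecting identity is this: by rotation invariance of Haar measure, $\mathbb{P}_{U(N)}(\theta_1>2s)$ equals the probability that a Haar-random $N\times N$ unitary matrix has no eigenvalue in a \emph{fixed} arc of length $2s$; by the Weyl integration formula together with Andr\'eief's identity, this is the $N\times N$ Toeplitz determinant whose symbol is the indicator of the complementary arc --- equivalently $\det(I-K_N)$ for the CUE correlation kernel $K_N$ restricted to the arc, equivalently a determinant built from polynomials orthogonal on an arc of the unit circle. Writing the empty arc as $\{e^{i\theta}:|\theta|<s\}$, this is precisely the object whose large-$N$ behaviour is analysed in \cite{krasovsky}.

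For $s$ in a compact subset of $(0,\pi)$ --- a macroscopic empty arc --- the main theorem of \cite{krasovsky} gives the full asymptotic expansion of this quantity; its leading terms, together with the additive constant $c_0$ (a combination of values of the Barnes $G$-function), form the first line of \eqref{ungapasympt}, and its error $O(1/N)$ is $O(1/(N\sin(s/2)))$ in this range. To cover $2s_0/N\le s\le\varepsilon_0$ for a suitable fixed $\varepsilon_0>0$, where the endpoints $e^{\pm is}$ of the empty arc approach one another but with separation still $\gg 1/N$, I would invoke the uniform ``merging-singularity'' asymptotics of \cite{di}, which interpolate between the macroscopic regime and the microscopic (sine-kernel) regime and produce the same expansion, now with uniform error $O(1/(N\sin(s/2)))$; comparing with the $s\to0^+$ behaviour of Krasovsky's formula shows the additive constant is the same $c_0$ on both ranges. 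Concatenating the two (overlapping) ranges gives the first line for all $N>s_0$ and $2s_0/N\le s\le\pi-\delta$.

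The second line of \eqref{ungapasympt} is the $s$-derivative of the first, term by term: $\frac{d}{ds}(N^2\log\cos(s/2))=-\frac{N^2}{2}\tan(s/2)$, $\frac{d}{ds}(-\frac14\log(N\sin(s/2)))=-\frac18\cot(s/2)$, $\frac{d}{ds}c_0=0$, and the remainder differentiates to $O(1/(N\sin^2(s/2)))$. For the last point I would use that the expansions of \cite{krasovsky,di} come from a Riemann--Hilbert analysis whose parametrices depend holomorphically on $s$ on a disk of radius $\asymp\sin(s/2)$ around each admissible $s$, so that Cauchy's estimate converts the size bound on the remainder into the stated bound on its derivative; alternatively, one differentiates under the Fredholm-determinant sign, expressing the $s$-derivative through the resolvent kernel of $K_N$ restricted to $[0,2s]$ evaluated at the moving endpoint, whose boundary asymptotics are a byproduct of the same analysis and reproduce the second line directly.

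The step I expect to be the real obstacle is the bookkeeping at the seam between the two cited inputs: checking that the uniform result of \cite{di} genuinely reaches down to endpoint separation $\asymp 1/N$ with an error that is $O(1/(N\sin(s/2)))$ rather than, say, $O(\log N/(N\sin(s/2)))$, and that its additive constant coincides with Krasovsky's $c_0$, so that a single explicit $c_0$ works throughout $2s_0/N\le s\le\pi-\delta$. Once that is in place, the differentiability claim requires only the holomorphic dependence of the parametrices on $s$ (or the standard differential identity above), and the rest is the elementary term-by-term differentiation displayed in the previous paragraph.
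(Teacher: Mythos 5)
Your reduction to a Toeplitz determinant (rotation invariance of Haar measure, Weyl integration formula, Andr\'eief/Heine identity) is exactly the paper's first step --- the paper cites \cite[Lemma 2]{conrey}, which is the same identity --- so that part is on target. The divergence is in how you extract the asymptotics. You propose a two-stage patching: Krasovsky's formula on macroscopic arcs, glued to a Deift--Its uniform merging-singularity expansion to cover $s\asymp 1/N$, followed by a separate derivation of the second line via Cauchy estimates on the holomorphic parametrix dependence (or a resolvent differential identity). The paper's actual proof is considerably shorter: after the Toeplitz reduction it simply reads off \emph{both} lines of \eqref{ungapasympt} from formulas (8) and (12) of \cite{di}, which already give the logarithm of the gap probability and its $s$-derivative with error $O(1/(N\sin(s/2)))$, respectively $O(1/(N\sin^2(s/2)))$, uniformly over the whole range $2s_0/N\le s\le \pi-\delta$. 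In other words, the ``seam'' you flag as the real obstacle does not arise --- the cited uniform error reaches down to endpoint separation $\asymp 1/N$ in a single expansion, with a single explicit constant $c_0$ --- and the derivative line requires no independent argument since the reference provides it directly. Your route would work, but it duplicates work that the sources already package, and it underestimates what is available: the hard analysis is entirely delegated to \cite{di}/\cite{krasovsky}, with the lemma being pure bookkeeping once the Toeplitz identity is in hand.
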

\begin{proof}
Clearly, $\mathbb{P}_{U(N)}\bigl(\theta_1>2s\bigr) = \int_{U(N)} 1_{\theta_1 > 2s} \,d\mathbb{P}_{U(N)}$. 
By the rotational invariance of $\mathbb{P}_{U(N)}$, 
this is the same as 
$\int_{U(N)} 1_{\theta_1,\ldots,\theta_N\not\in[0,s]\cup
[2\pi-s,2\pi]} \,d\mathbb{P}_{U(N)}$. Further, by
\cite[Lemma 2]{conrey} and the Weyl integration formula on $U(N)$, this
is the Toeplitz determinant 
$\det_{N\times N}(\int_s^{2\pi-s} e^{i(j-k)\theta}\,d\theta/(2\pi))$,
for which the relevant asymptotics are supplied by 
formulas (8) and (12) in \cite{di}.
\end{proof}
\begin{remark}
Lemma 6.8.3 in \cite{ks} furnishes the 
following interesting factorization of the gap probabilities:
$\mathbb{P}_{U(2N+1)}\bigl(\theta_1>2s\bigr) =
\mathbb{P}_{SO(2N+2)}\bigl(\theta_1>s\bigr) \, 
\mathbb{P}_{USp(2N)}\bigl(\theta_1>s\bigr).$
So, as a direct consequence of Lemmas \ref{uspgapbounds} and
\ref{diLemma}, we obtain
$\log \mathbb{P}_{SO(2N)}\bigl(\theta_1>s\bigr) =
(2+o(1))N^2\log\cos(s/2)$,
provided that $Ns\to\infty$ as $N\to\infty$.
Note that the machinery of orthogonal 
polynomials supplies general, but involved, methods to derive precise 
asymptotics for determinant expressions of gap probabilities
like those in \eqref{ungapasympt}; e.g.\ see
\cite{basor-ehrhardt} and \cite{di2}. 
In the case of $USp(2N)$, for example, the gap probability can be expressed 
as Toeplitz$+$Hankel determinant, or 
by appealing to \eqref{gapprobintegral}, 
as a Hankel determinant. 
\end{remark}
To prove the proposition, first note\footnote{See Lemma 3.1 in \cite{arous-bourgade}, but note it is
missing a factor of $2\pi$.}
\begin{equation}
\label{ungapprobderivative}
\frac{N}{2\pi} \mathbb{P}_{U(N)}(\theta_2-\theta_1 > u) = \left. -\frac{1}{2}
\frac{d}{ds}\mathbb{P}_{U(N)}\bigl(\theta_1>2s\bigr)\right|_{s=u/2}.
\end{equation}
Therefore,
\begin{equation}
\label{unnearestneighbour}
\mathbb{P}_{U(N)} (\theta_2-\theta_1 > u) = \left.-\frac{\pi}{N}\left(\frac{d}{ds} 
\log \mathbb{P}_{U(N)}\bigl(\theta_1>2s\bigr) \right)\right|_{s = u/2}
\mathbb{P}_{U(N)}\bigl(\theta_1>u\bigr).
\end{equation}
It follows from Lemma~\ref{diLemma} that as $N\to \infty$, and 
uniformly for $N^{\nu_1-1} \le u \le 2\pi -\delta$ (for any small constant
$\nu_1>0$ we wish), that the term controlling the behavior in
\eqref{unnearestneighbour} 
is $\mathbb{P}_{U(N)}\bigl(\theta_1>u\bigr)$. Explicitly,
$$
\mathbb{P}_{U(N)}(\theta_2-\theta_1 > u) = \exp((1+o(1)) N^2 \log \cos (u/4))\,.
$$
If we further require $u \le N^{-\nu_2}$ (for any small constant $\nu_2>0$ we
wish),
then $u\to 0$ as $N\to\infty$ and $\log \cos (u/4) \sim -u^2/32$. Therefore, 
\begin{equation}
\label{ungapprobtail}
\mathbb{P}_{U(N)}(\tfrac12[\theta_2-\theta_1] > u) = \exp (- (1+o(1)) N^2 u^2/8 )
\end{equation}
uniformly for $N^{\nu_1-1} \le 2u \le N^{-\nu_2}$. 
Thus, by a similar calculation to \eqref{uspasymptderivation}, 
we obtain the result for $\max_{1\le j\le
M_{\beta}(N)}\frac12[\theta_2(m)-\theta_1(m)]$. 
If, in addition,  one maximizes over all the spacings in each matrix,
thus replacing $\frac12[\theta_2(m)-\theta_1(m)]$ 
by $\max_{1\le j \le N}\frac12[\theta_{j+1}(m)-\theta_j(m)]$,  
then the same result holds.  For clearly 
$$
\mathbb{P}_{U(N)} \bigl(u^-_{\varepsilon,\beta}(N)< 
\max_{1\le m\le M_{\beta}(N)}\tfrac12[\theta_2-\theta_1]\bigr) 
\le
\mathbb{P}_{U(N)}\bigl(u^-_{\varepsilon,\beta}(N)
< \max_{1\le m\le M_{\beta}(N)} \max_{1\le j\le N} 
\tfrac12[\theta_{j+1}(m)-\theta_j(m)] \bigr).
$$
Hence, if the left-hand tends to $1$ then the right-hand side does as
well.  In the opposite direction, we have
$$
N\,\mathbb{P}_{U(N)}\bigl(\tfrac12[\theta_2-\theta_1]
>u^+_{\varepsilon,\beta}(N)\bigr) =o(1),
$$
as $N\to\infty$, by virtue of \eqref{ungapprobtail}. Thus,
\begin{align*}
\log \mathbb{P}_{U(N)}\bigl(\max_{1\le m\le M_{\beta}(N)} &\max_{1\le j\le N} 
\tfrac12[\theta_{j+1}(m)-\theta_j(m)] \le u^+_{\varepsilon,\beta}(N) \bigr) \\
&=M\log \mathbb{P}_{U(N)}\bigl(\max_{1\le j\le N}
\tfrac12[\theta_{j+1}(m)-\theta_j(m)] 
\le u^+_{\varepsilon,\beta}(N)\bigr)  \\ 
&\ge M\log
\bigl(1-N\mathbb{P}_{U(N)}\bigl(\tfrac12[\theta_2-\theta_1]
>u^+_{\varepsilon,\beta}(N)\bigr)\bigr) \\
& =-M N \,\mathbb{P}_{U(N)}\bigl(\tfrac12[\theta_2-\theta_1]
>u^+_{\varepsilon,\beta}(N)\bigr) (1+o(1)),
\end{align*}
where we used the rotational invariance of $\mathbb{P}_{U(N)}$
in the third line. 
Hence maximizing over $1\le j\le N$ does not change our previous calculation
more than does increasing the
number of samples by a factor of $N$, which affects lower order terms only.
Thus, as before, $-M N
\,\mathbb{P}_{U(N)}\bigl(\frac12[\theta_2-\theta_1]>u^+_{\varepsilon,\beta}(N) \bigr)\to0$,
and so $\mathbb{P}_{U(N)}\bigl(\max_{1\le m\le M_{\beta}(N)} \max_{1\le j\le N} 
\frac12[\theta_{j+1}(m)-\theta_j(m)] \le u^+_{\varepsilon,\beta}(N) \bigr) \to 1$.
\qed

\subsection{Proof of Prop.~\ref{converseprop}}
By the growth estimate for the divisor $m$, there is a Hadamard
product $F(z)$ which is entire of finite order, even, satisfies
$\ord_{z=\gamma}F(z)=m(\gamma)$ for all $\gamma\in S$ and does not vanish
outside of $S$.  Moreover, $\frac{F'}{F}(z)$ has at most polynomial growth
in horizontal strips outside of $S$, so that $\frac{F'}{F}(z)h(z)$ decays
rapidly in such strips for any $h$ as in the statement of the proposition.
By the argument principle, for any $c>1/2$ we have
\begin{align*}
\sum_{\gamma\in S}m(\gamma)h(\gamma)&= 
\frac1{2\pi i}\int_{\Im(z)=-c}\frac{F'}{F}(z)h(z)\,dz
-\frac1{2\pi i}\int_{\Im(z)=c}\frac{F'}{F}(z)h(z)\,dz\\
&=\frac1{\pi i}\int_{\Im(z)=-c}\frac{F'}{F}(z)h(z)\,dz.
\end{align*}

Next, let $a\in\{0,1\}$ be such that $(-1)^a=\sgn{d}$, and define
$$\Lambda(s)=|d|^{s/2}\Gamma_{\R}(s+a)
\exp\!\left(\sum_{n=2}^{\infty}
\frac{c_n}{\log{n}}n^{\frac12-s}\right)
\quad\text{and}\quad
\Phi(z)=\Lambda(\frac12+iz).
$$
By the estimate for $c_n$, $\Phi$ is
analytic for $\Im(z)<-1$, where it satisfies
$$
-i\frac{\Phi'}{\Phi}(z)=
\frac12\log|d|
+\frac{\Gamma_\R'}{\Gamma_\R}\!\left(\frac12+a+iz\right)
-\sum_{n=2}^{\infty}c_nn^{-iz}.
$$
Thus, for any $c>1$ we have
\begin{align*}
\frac1{\pi i}&\int_{\Im(z)=-c}\frac{\Phi'}{\Phi}(z)h(z)\,dz\\
&=g(0)\log{|d|}+
\frac1{\pi}\int_{-\infty}^{\infty}
\frac{\Gamma_\R'}{\Gamma_\R}\!\left(\frac12+a+it\right)h(t)\,dt
-2\sum_{n=2}^{\infty}c_ng(\log{n})\\
&=\sum_{\gamma\in S}m(\gamma)h(\gamma)
=\frac1{\pi i}\int_{\Im(z)=-c}\frac{F'}{F}(z)h(z)\,dz.
\end{align*}

Let us now set $f(z)=\frac{F'}{F}(z)-\frac{\Phi'}{\Phi}(z)$ for
$\Im(z)<-1$.  By the above, we see that
$\frac1{\pi}\int_{\Im(z)=-c}f(z)h(z)\,dz=0$ for every
$c>1$ and every suitable choice of test function $h$.
Fix one choice of $h$ and consider the Fourier transform
$$
u(x)=\frac1{2\pi}\int_{\Im(z)=-c}f(z)h(z)e^{-ixz}\,dz.
$$
Note that since $f(z)h(z)$ is holomorphic for $\Im(z)<-1$ and of
rapid decay in horizontal strips, $u(x)$ does not depend on $c$.
Further, for any fixed $x\in\R$, $h(z)\cos(xz)$
is also a suitable test function, so we have
$$u(x)+u(-x)=\frac1{\pi}\int_{\Im(z)=-c}f(z)h(z)\cos(xz)\,dz=0,$$
i.e.\ $u$ is an odd function of $x$.
Combining this with the trivial estimate $u(x)\ll_{c,h}e^{-cx}$,
we get $u(x)\ll_{c,h}e^{-c|x|}$.

Using this estimate for some $c>1$ together with the Fourier
inversion formula
$$f(z)h(z)=\int_{-\infty}^{\infty}u(x)e^{ixz}\,dx,$$
we see that $f(z)h(z)$ continues to an entire function and is odd.
Since $h$ is arbitrary, it follows from a suitable approximation
argument that $f$ continues to an odd
entire function with at most polynomial growth in horizontal strips.
Recalling the definition of $f$ and integrating, we see that $\Phi(z)$
continues to an entire function of finite order satisfying
$\Phi(z)=\epsilon\Phi(-z)$ for some $\epsilon\in\{\pm1\}$,
$\ord_{z=\gamma}\Phi(z)=m(\gamma)$ for every $\gamma\in S$, and
$\Phi(z)\ne0$ for $z\not\in S$.

The remaining statements now essentially follow from the
converse theorem for degree 1 elements of the Selberg class
\cite{kp}, except that the proof given there assumes that
the Dirichlet series $L(s)=\sum_{n=1}^{\infty}a_nn^{-s}$
defined by
$L(s)=\exp\!\left(\sum_{n=2}^{\infty}\frac{c_n}{\log{n}}n^{\frac12-s}\right)$
converges absolutely
for $\Re(s)>1$, which we only know to be true for $\Re(s)>\frac32$.  That
assumption is not necessary, however, and for the sake of completeness
we sketch a simplified proof following the method of \cite{sound}.

First note that the symmetry of $\Phi$ is equivalent to
the functional equation $\Lambda(s)=\epsilon\Lambda(1-s)$.  Next, for any
$\alpha, y>0$ we have
\begin{align*}
2\sum_{n=1}^{\infty}a_n e(n\alpha)e^{-2\pi ny}
&=\frac1{2\pi i}\int_{\Re(s)=2}L(s)\Gamma_\C(s)
(y-i\alpha)^{-s}\,ds\\
&=\frac1{2\pi i}\int_{\Re(s)=2}\Lambda(s)\Gamma_\R(s+1-a)
\bigl[\sqrt{|d|}(y-i\alpha)\bigr]^{-s}\,ds,
\end{align*}
where for any $z$ with positive real part we define
$z^{-s}=\exp(-s\log{z})$ using the principal branch of the logarithm.

By the Phragm\'en--Lindel\"of theorem, the integrand decays rapidly in
vertical strips, so we may shift the contour to $\Re(s)=-3/4$ and apply
the functional equation to obtain
\begin{align*}
2\sum_{n=1}^{\infty}a_n&e(n\alpha)e^{-2\pi ny}
-(1-(-1)^a)\Lambda(0)\\
&=\frac1{2\pi i}\int_{\Re(s)=-3/4}\Lambda(s)\Gamma_\R(s+1-a)
\bigl[\sqrt{|d|}(y-i\alpha)\bigr]^{-s}\,ds\\
&=\frac{\epsilon}{2\pi i}\int_{\Re(s)=7/4}
\Lambda(s)\Gamma_\R(2-a-s)
\bigl[\sqrt{|d|}(y-i\alpha)\bigr]^{s-1}\,ds.
\end{align*}
Expanding $\Lambda(s)$ as
$|d|^{s/2}\Gamma_\R(s+a)\sum_{n=1}^{\infty}a_nn^{-s}$ and using
the identity $\Gamma_\R(s)\Gamma_\R(2-s)=\csc(\pi{s}/2)$, we get
\begin{align*}
2\sum_{n=1}^{\infty}a_n&e(n\alpha)e^{-2\pi ny}
-(1-(-1)^a)\Lambda(0)\\
&=\epsilon\sqrt{|d|}\sum_{n=1}^{\infty}
\frac{a_n}{2\pi i}\int_{\Re(s)=7/4}
n^{-s}\csc\left(\frac{\pi(s+a)}2\right)
\bigl[|d|(y-i\alpha)\bigr]^{s-1}\,ds\\
&=\frac2{\pi}\epsilon i^{a+1}\sqrt{|d|}
\sum_{n=1}^{\infty}\frac{a_n}{n}
\frac{\left(\frac{|d|(\alpha+iy)}{n}\right)^a}
{\frac{|d|(\alpha+iy)}{n}-\frac{n}{|d|(\alpha+iy)}}.
\end{align*}

If $\alpha|d|$ is not an integer then the last line is $O_{\alpha}(1)$
uniformly for $y\in(0,1)$, while if $\alpha|d|=n$ is an integer then
we get $\frac{\epsilon i^aa_n}{\pi\sqrt{|d|}y}+O_\alpha(1)$.
Since the left-hand side is periodic in $\alpha$, we conclude that
$d$ is an integer and $a_n=a_{n+|d|}$, i.e.\ the coefficients $a_n$ are
periodic. Moreover, since $\Lambda(s)$ does not vanish for $\Re(s)>1$,
it follows from \cite[Thm.~4]{sw} that there is a positive integer
$q$ dividing $d$ and a primitive Dirichlet character $\chi\pmod{q}$ such
that $L(s)=D(s)L(s,\chi)$, where
$D(s)=\sum_{n\big|\frac{|d|}{q}}b_nn^{-s}$
for certain coefficients $b_n$, with $b_1=1$.

Let $\Lambda(s,\chi)=q^{s/2}\Gamma_\R(s+a')L(s,\chi)$
be the associated complete $L$-function.  Then we have
\begin{equation}
\label{lambdaratio}
\frac{\Lambda(s)}{\Lambda(s,\chi)}=\left(\frac{|d|}{q}\right)^{s/2}
\frac{\Gamma_\R(s+a)}{\Gamma_\R(s+a')}D(s).
\end{equation}
Moreover, it is easy to see that $D(s)\Lambda(s,\chi)$ does not vanish
in some left half plane. Thus, to avoid concluding from \eqref{lambdaratio}
that $\Lambda(s)$ has poles at negative integers, it must be the case that
and $a'=a$, so that $\chi(-1)=\sgn{d}$.  From this and the functional
equations for $\Lambda(s)$ and $\Lambda(s,\chi)$, it follows that
$\frac{\Lambda(s,\chi)}{\Lambda(s,\overline{\chi})}\frac{D(s)}{D(1-s)}$
is an entire function. Note that for large $T>0$, $D(s)/D(1-s)$ has $O(T)$
zeros and poles with imaginary part in $[-T,T]$.  On the other hand, work
of Fujii \cite{fujii} shows that if $\chi_1$ and $\chi_2$ are distinct,
primitive Dirichlet characters then $\Lambda(s,\chi_1)/\Lambda(s,\chi_2)$
has $\gg T\log{T}$ zeros and poles in that region. Thus, we must have
$\chi=\overline{\chi}$, i.e.\ $\chi$ is quadratic and
$q\sgn{d}$ is a fundamental discriminant.

Therefore, by \eqref{lambdaratio} and the functional equations for
$\Lambda(s)$ and $\Lambda(s,\chi)$, $D(s)$ satisfies the functional equation
\begin{equation}
\label{Dfunceq}
D(s)=\epsilon\left(\frac{|d|}{q}\right)^{\frac12-s}D(1-s).
\end{equation}
Next, from the formula for $L(s)$, we have
$$
\frac{D'}{D}(s)=\sum_{\substack{n\ge2\\n\bigl|\bigl(\frac{|d|}q\bigr)^\infty}}
\left(\frac{\Lambda(n)\chi(n)}{\sqrt{n}}-c_n\right)
n^{\frac12-s},
$$
where the notation $n\bigl|\bigl(\frac{|d|}q\bigr)^\infty$ means that
$n$ is composed only of primes dividing $|d|/q$.

Now, from \eqref{Dfunceq} and the estimate
$\frac{\Lambda(n)\chi(n)}{\sqrt{n}}-c_n=O(n^{-\delta})$ it follows that
$\frac{D'}{D}(s)$ is entire, and thus $D(s)=1$ identically.
Finally, invoking \eqref{Dfunceq} once more,
we have $|d|=q$ and $\epsilon=1$.
\qed

\bibliographystyle{amsplain}
\bibliography{paper}

\providecommand{\bysame}{\leavevmode\hbox to3em{\hrulefill}\thinspace}
\providecommand{\MR}{\relax\ifhmode\unskip\space\fi MR }
% \MRhref is called by the amsart/book/proc definition of \MR.
\providecommand{\MRhref}[2]{%
  \href{http://www.ams.org/mathscinet-getitem?mr=#1}{#2}
}
\providecommand{\href}[2]{#2}
\begin{thebibliography}{10}

\bibitem{ajtai}
M.~Ajtai, \emph{The shortest vector problem in {$L_2$} is {$NP$}-hard for
  randomized reductions (extended abstract)}, Proceeding STOC '98 Proceedings
  of the thirtieth annual ACM symposium on Theory of computing, 1998.

\bibitem{arous-bourgade}
G\'erard~Ben Arous and Paul Bourgade, \emph{Extreme gaps between eigenvalues of
  random matrices}, arXiv:1010.1294, 2011.

\bibitem{basor-ehrhardt}
Estelle~L. Basor and Torsten Ehrhardt, \emph{Asymptotic formulas for
  determinants of a sum of finite {T}oeplitz and {H}ankel matrices}, Math.
  Nachr. \textbf{228} (2001), 5--45. \MR{1845906 (2002d:47041)}

\bibitem{conrey}
Brian Conrey, \emph{Notes on eigenvalue distributions for the classical compact
  groups}, Recent perspectives in random matrix theory and number theory,
  London Math. Soc. Lecture Note Ser., vol. 322, Cambridge Univ. Press,
  Cambridge, 2005, pp.~111--145. \MR{2166460 (2006g:11177)}

\bibitem{costa-harvey}
Edgar Costa and David Harvey, \emph{Faster deterministic integer
  factorization}, arXiv:1201.2116, 2011.

\bibitem{davenport}
Harold Davenport, \emph{Multiplicative number theory}, third ed., Graduate
  Texts in Mathematics, vol.~74, Springer-Verlag, New York, 2000, Revised and
  with a preface by Hugh L. Montgomery. \MR{1790423 (2001f:11001)}

\bibitem{di}
P.~Deift, A.~Its, I.~Krasovsky, and X.~Zhou, \emph{The {W}idom-{D}yson constant
  for the gap probability in random matrix theory}, J. Comput. Appl. Math.
  \textbf{202} (2007), no.~1, 26--47. \MR{2301810 (2008e:82027)}

\bibitem{di2}
Percy Deift, Alexander Its, and Igor Krasovsky, \emph{Asymptotics of
  {T}oeplitz, {H}ankel, and {T}oeplitz+{H}ankel determinants with
  {F}isher-{H}artwig singularities}, Ann. of Math. (2) \textbf{174} (2011),
  no.~2, 1243--1299. \MR{2831118 (2012h:47063)}

\bibitem{fujii}
Akio Fujii, \emph{On the zeros of {D}irichlet {$L$}-functions. {V}}, Acta
  Arith. \textbf{28} (1975/76), no.~4, 395--403. \MR{411182 (81g:10057a)}

\bibitem{gmp-ecm}
\emph{{GMP-ECM}}, \url{http://ecm.gforge.inria.fr/}.

\bibitem{glpk}
\emph{{GNU Linear Programming Kit}}, \url{http://www.gnu.org/software/glpk/}.

\bibitem{kp}
Jerzy Kaczorowski and Alberto Perelli, \emph{On the structure of the {S}elberg
  class. {I}. {$0\leq d\leq 1$}}, Acta Math. \textbf{182} (1999), no.~2,
  207--241. \MR{1710182 (2000h:11097)}

\bibitem{ks}
Nicholas~M. Katz and Peter Sarnak, \emph{Random matrices, {F}robenius
  eigenvalues, and monodromy}, American Mathematical Society Colloquium
  Publications, vol.~45, American Mathematical Society, Providence, RI, 1999.
  \MR{1659828 (2000b:11070)}

\bibitem{katz-sarnak}
\bysame, \emph{Zeroes of zeta functions and symmetry}, Bull. Amer. Math. Soc.
  (N.S.) \textbf{36} (1999), no.~1, 1--26. \MR{1640151 (2000f:11114)}

\bibitem{krasikov}
I.~Krasikov, \emph{Uniform bounds for {B}essel functions}, J. Appl. Anal.
  \textbf{12} (2006), no.~1, 83--91. \MR{2243854 (2008c:33002)}

\bibitem{krasovsky}
I.~V. Krasovsky, \emph{Gap probability in the spectrum of random matrices and
  asymptotics of polynomials orthogonal on an arc of the unit circle}, Int.
  Math. Res. Not. (2004), no.~25, 1249--1272. \MR{2047176 (2005d:60086)}

\bibitem{LLS}
Youness Lamzouri, Xiannan Li, and Kannan Soundararajan, \emph{The least
  quadratic non-residue, values of {$L$}-functions at $s=1$, and related
  problems}, {\tt arXiv:1309.3595} (2013).

\bibitem{lll}
A.~K. Lenstra, H.~W. Lenstra, Jr., and L.~Lov{\'a}sz, \emph{Factoring
  polynomials with rational coefficients}, Math. Ann. \textbf{261} (1982),
  no.~4, 515--534. \MR{682664 (84a:12002)}

\bibitem{martin}
Robert Martin, \emph{Bandlimited functions, curved manifolds, and self-adjoint
  extensions of symmetric operators}, ProQuest LLC, Ann Arbor, MI, 2008, Thesis
  (Ph.D.)--University of Waterloo (Canada). \MR{2712567}

\bibitem{mehta}
Madan~Lal Mehta, \emph{Random matrices}, third ed., Pure and Applied
  Mathematics (Amsterdam), vol. 142, Elsevier/Academic Press, Amsterdam, 2004.
  \MR{2129906 (2006b:82001)}

\bibitem{montgomery-odlyzko}
Hugh~L. Montgomery and Andrew~M. Odlyzko, \emph{Large deviations of sums of
  independent random variables}, Acta Arith. \textbf{49} (1988), no.~4,
  427--434. \MR{937937 (89m:11075)}

\bibitem{odlyzko}
A.~M. Odlyzko, \emph{On the distribution of spacings between zeros of the zeta
  function}, Math. Comp. \textbf{48} (1987), no.~177, 273--308. \MR{866115
  (88d:11082)}

\bibitem{odlyzko-teriele}
A.~M. Odlyzko and H.~J.~J. te~Riele, \emph{Disproof of the {M}ertens
  conjecture}, J. Reine Angew. Math. \textbf{357} (1985), 138--160. \MR{783538
  (86m:11070)}

\bibitem{omar}
Sami Omar, \emph{Non-vanishing of {D}irichlet {$L$}-functions at the central
  point}, Algorithmic number theory, Lecture Notes in Comput. Sci., vol. 5011,
  Springer, Berlin, 2008, pp.~443--453. \MR{2467864 (2009k:11133)}

\bibitem{pollard}
J.~M. Pollard, \emph{Theorems on factorization and primality testing}, Proc.
  Cambridge Philos. Soc. \textbf{76} (1974), 521--528. \MR{0354514 (50 \#6992)}

\bibitem{rubinstein}
Michael Rubinstein, \emph{Low-lying zeros of {$L$}-functions and random matrix
  theory}, Duke Math. J. \textbf{109} (2001), no.~1, 147--181. \MR{1844208
  (2002f:11114)}

\bibitem{rubinstein-sarnak}
Michael Rubinstein and Peter Sarnak, \emph{Chebyshev's bias}, Experiment. Math.
  \textbf{3} (1994), no.~3, 173--197. \MR{1329368 (96d:11099)}

\bibitem{sw}
Eric Saias and Andreas Weingartner, \emph{Zeros of {D}irichlet series with
  periodic coefficients}, Acta Arith. \textbf{140} (2009), no.~4, 335--344.
  \MR{2570109 (2010m:11107)}

\bibitem{sound}
K.~Soundararajan, \emph{Degree 1 elements of the {S}elberg class}, Expo. Math.
  \textbf{23} (2005), no.~1, 65--70. \MR{2133337 (2006c:11104)}

\bibitem{strassen}
Volker Strassen, \emph{Einige {R}esultate \"uber {B}erechnungskomplexit\"at},
  Jber. Deutsch. Math.-Verein. \textbf{78} (1976/77), no.~1, 1--8. \MR{0438807
  (55 \#11713)}

\bibitem{swamy}
M.~N.~S. Swamy, \emph{Further properties of {M}organ-{V}oyce polynomials},
  Fibonacci Quart. \textbf{6} (1968), no.~2, 167--175. \MR{0237470 (38 \#5752)}

\end{thebibliography}
\end{document}